\theoremstyle{plain}
\newtheorem{thm}{Theorem}[section]
\newtheorem{cor}[thm]{Corollary}
\newtheorem{lem}[thm]{Lemma}
\newtheorem{prop}[thm]{Proposition}
\theoremstyle{definition}
\newtheorem{defn}{Definition}[section]
\newtheorem{oss}{Remark}[section]
\author{Camilla Felisetti}
\title{Intersection cohomology of the moduli space of Higgs bundles on a genus 2 curve}
\DeclareMathOperator{\im}{Im}
\DeclareMathOperator{\rank}{rank}
\newcommand{\ita}{\textit}
\newcommand{\gr}{\textbf}
\newcommand{\re}{\mathbb{R}}
\newcommand{\co}{\mathbb{C}}
\newcommand{\spartial}{\bar{\partial}}
\newcommand{\cinfty}{\mathcal{C}^{\infty}}
\newcommand{\sla}{sl(2)}
\newcommand{\rdol}{\mathcal{R}_{Dol}}
\newcommand{\rdolg}{\mathcal{R}_{Dol}^G}
\newcommand{\mdol}{\mathcal{M}_{Dol}}
\newcommand{\mdolg}{\mathcal{M}_{Dol}^G}
\newcommand{\tmdol}{\tilde{\mathcal{M}}_{Dol}}
\newcommand{\tmdolg}{\tilde{\mathcal{M}}_{Dol}^{G}}
\newcommand{\tomega}{\tilde{\Omega}}
\newcommand{\tsigma}{\tilde{\Sigma}}
\newcommand{\luniv}{\mathcal{L}}
\newcommand{\vuniv}{\mathcal{V}}
\newcommand{\muniv}{\mathcal{M}}
\newcommand{\jup}{\mathcal{K}^0}
\newcommand{\jo}{\mathcal{K}_0}
\begin{document}
\maketitle

\begin{abstract} Let $C$ be a smooth projective curve of genus $2$. Following a method by O' Grady, we construct a semismall desingularization $\tmdol^G$ of the moduli space $\mdol^G$ of semistable $G$-Higgs bundles of degree 0 for $G=\mathrm{GL}(2,\co), \mathrm{SL}(2,\co)$. By the decomposition theorem of Beilinson, Bernstein and Deligne, one can write the cohomology of $\tmdol^G$ as a direct sum of the intersection cohomology of $\mdol^G$ plus other summands supported on the singular locus. We use this splitting to compute the intersection cohomology of $\mdol^G$ and prove that the mixed Hodge structure on it is pure, in analogy with what happens to ordinary cohomology in the smooth case of coprime rank and degree.\\
	
\smallskip
\noindent \tiny{KEYWORDS:} Higgs bundles; desingularization;  intersection cohomology; mixed Hodge structure.\\

\smallskip
\noindent \tiny{CLASSIFICATION:} 14D20, 14C30
\end{abstract}

\tableofcontents
\section{Introduction}

Let $G=\mathrm{GL}(2,\co), \mathrm{SL}(2,\co)$. We denote by $\mdolg$ be the moduli space of $G$-Higgs bundles of rank 2 and degree 0 on genus 2 curve $C$.
In this paper we want to study the cohomological properties of this space from a Hodge theoretic point of view.
The main problem in doing that is that the moduli space is singular and many fundamental theorems like Poincaré duality or Hard-Lefschetz theorem fail for ordinary cohomology groups. To overcome this fact one might opt for two solutions: to resolve singularities or to consider a different cohomological invariant, namely, intersection cohomology. Here we adopt both strategies, showing how they are strongly interrelated. 
We first show the following result. 
\begin{thm}
	Let $C$ be a curve of genus 2 and let $\mdolg$ be the moduli space of $G$-Higgs bundles of rank 2 and degree 0 on $C$. Then there exists a symplectic desingularization $\tilde{\pi}:\tmdol^G\rightarrow\mdolg$.
\end{thm}

This theorem was obtained independently by Bellamy and Schedler in \cite{BS} in the language of character varieties. In fact it is well known that, by the non abelian Hodge theorem \cite{S}, $\mdolg$ is real analytic isomorphic to the character variety of representations of the fundamental group of $C$ into $G$.
The equivalence with our result is obtained by applying the isosingularity principle \cite[Theorem 10.6]{S}, which states that two corresponding singular points on the moduli spaces admit isomorphic étale neighbourhoods.
Having a symplectic desingularization is indeed a special feature of this moduli space: in fact, apart from rank 2 and degree 0 $G$-Higgs bundles on a genus 2 curve, the only other case in which a symplectic resolution exists is that of degree 0 $G$-Higgs bundles of arbitrary rank on an elliptic curve. For higher genera and ranks, the moduli spaces do not admit a symplectic desingularization, see \cite{T} and \cite{KY}.

By a theorem of Kaledin (see \cite{Kal}) all symplectic resolutions are \ita{semismall} (cfr. section 1) and such a property allows to apply a special version of the Decomposition Theorem of Beilinson, Bernstein and Deligne \cite{BBD} that asserts that the cohomology of the resolution $\tmdol^G$ splits as a direct sum of the intersection cohomology of $\mdolg$ with some summands supported on the singular locus, which we are able to determine. Given this theorem, if we compute the ordinary cohomology of the resolution and subtract the contributions coming from the summands on the singular locus, we end up with the intersection cohomology of $\mdolg$.
More precisely, we prove the following result.
\begin{thm}\label{poincare} Let $C$ be a smooth projective complex curve of genus 2 and let $\mdol^\mathrm{G}$ be the moduli space of $G$-Higgs bundles on $C$ for $G=\mathrm{GL}(2,\co), \mathrm{SL}(2,\co)$. Then the intersection Poincaré polynomials of $\mdolg$ are:
	\begin{align*}
	IP_t(\mdol^{\mathrm{SL}}) &=1+t^2+17t^4+17t^6;\\
	IP_t(\mdol^{\mathrm{GL}}) &= 1+4t+7t^2+8t^3+9t^4+12t^5+15t^6+16t^7+14t^8+8t^9+2t^{10}.
	\end{align*}
	Moreover the Hodge structure on the intersection cohomology of $\mdolg$ is pure and the Hodge diamond is 
	$$G=\mathrm{SL}(2,\co): \begin{tabular}{l|ccc}
	0&(0,0)&\\
	&&\\
	2&(1,1)&\\
	&&\\
	4&17(2,2)&\\
	&&\\
	6&17(3,3)&\\
	\end{tabular}
	$$
	$$G=\mathrm{GL(2,\co)}: \begin{tabular}{l|ccc}
	0&&(0,0)&\\
	1&2(1,0)&&2(0,1)\\
	2&(2,0)&5(1,1)&(0,2)\\
	3&4(2,1)&&4(1,2)\\
	4&(1,3)&7(2,2)&(3,1)\\
	5&6(3,2)&&6(2,3)\\
	6&2(4,2)&11(3,3)&2(2,4)\\
	7&8(4,3)&&8(3,4)\\
	8&2(5,3)&10(2,2)&2(3,5)\\
	9&4(5,4)&&4(4,5)\\
	10&&2(5,5)&.\\
	\end{tabular}
	$$
\end{thm}

Let us now give a sketch of the proof of these two results. 
The resolution is constructed using a strategy developed by O' Grady in \cite{OG} and \cite{OG2} to desingularize moduli spaces of sheaves on K3 or abelian surfaces. With this procedure, O' Grady obtains two new examples of compact hyperk\"{a}hler manifolds up to birational equivalence, which are usually denoted by OG10 and OG6. Such examples have dimension, respectively 10 and 6, as $\mdol^{\mathrm{GL}}$ and $\mdol^{\mathrm{SL}}$. Indeed more is true: applying similar methods to those in \cite{dCMS} and \cite{DEL}, one can show that there exists a degeneration of hyperk\"{a}hler manifolds from O' Grady moduli spaces to those of Higgs bundles. In view of this, the singularities of $\mdolg$ have isomorphic local description to those of O' Grady examples, as we show in section 3. 

The singular locus $\Sigma^G$ is given by Higgs bundles which can be written as a direct sum of two Higgs line bundles. For example, for $G=\mathrm{GL}(2,\co)$ the singular locus stratifies as
\begin{align*} 
\Sigma^G &=\left\lbrace (L,\phi)\oplus (M,\psi)\mid L,M\in Jac(C), \text{ and } \phi,\psi\in H^0(K_C) \right \rbrace;\\
\Omega^G &=\left\lbrace (L,\phi)\oplus (L,\phi)\mid L\in Jac(C), \text{ and } \phi\in H^0(K_C) \right \rbrace.
\end{align*}
It is easy to see that in this case $\Sigma^G$ is isomorphic to the symmetric product $Sym^2(Jac(C)\times H^0(K_C))$, thus it is a singular 8-fold with finite quotient singularities along $\Omega^G$.
The description for $G=\mathrm{SL}(2,\co)$ is analogous mutatis mutandis. \\
In both cases, the resolution is obtained by a single blow up along the locus $\Sigma^G$: a crucial point in the construction is the description of the singularities of O' Grady moduli spaces provided by Lehn and Sorger in \cite{LS}.  

To prove the main theorem \ref{poincare}, we first show that the Hodge structure on intersection cohomology groups is pure. To do that we extend the natural $\co^*$-action on $\mdolg$, given by rescaling the Higgs field, to $\tmdolg$: this yields an isomorphism of mixed Hodge structures between the cohomology of $\tmdolg$ and that of a compact subvariety, namely the preimage $(\pi\circ \chi)^{-1}(0)$ of the nilpotent cone in the resolution.
On the one hand the smoothness of $\tmdolg$ implies that the weights are greater or equal than the cohomological degree; on the other hand, since the cohomology of $\tmdolg$ is that of a compact variety, they cannot exceed it. 
As a result $H^j(\tmdolg)$ carries a pure Hodge structure of weight $j$. Since the Hodge structure on $IH^*(\tmdolg)$ is a sub-Hodge structure of that on $H^*(\tmdolg)$, it is pure as well. 
Indeed one can show in the same way that the Hodge structure on the intersection cohomology of moduli space of rank 2 Higgs bundles is pure in any genus. Ultimately this is due to the fact that the resolution is obtained by blowing up along $\co^*$-equivariant subsets, so that the above argument still works. This will be shown by the author in a forthcoming paper. It is likely that such result should hold also in higher rank, however the structure of the singular locus is more complicated and requires further attention. 

Knowing the purity of the Hodge structure, we can obtain cohomology by computing other cohomological invariants, called \ita{E-polynomials} (cfr. section 1), generally much easier to determine than Poincaré polynomials. 

The $E-$polynomial of a variety $X$ is defined as 
$$ E(X)= \sum_{j,p,q}(-1)^j h^{j,p,q}_c u^pv^q,$$
where $h^{j,p,q}_c=\dim\mathrm{Gr}^F_p Gr^W_{p+q}H_c^j(X)$.
Moreover, up to replace $H^j_c(X)$ with $IH^j_c(X)$, one can define an analogous invariant for intersection cohomology called \ita{intersection $E-$polynomial}.
$E-$polynomials satisfy the an additivity property:
 $E(X)=E(Y)+E(X\setminus Y)$ for all $Y\subset X$.\\
% If a variety admits a pure Hodge structure the intersection E-polynomial give intersection Betti numbers with compact support:
% $$\dim IH^j_c(X)=\sum_{p+q=j} ih^{j,p,q}_c.$$
By stratifying $\tmdolg$ and computing $E-$polynomials for all strata we end up with $E(\tmdolg)$. Moreover we also compute the $E-$polynomials of the contributions supported on the singular loci. By subtracting them one get the $E-$polynomial for  the intersection cohomology of $\mdolg$.\\
We remark that, though the additivity property is false in general for intersection E-polynomial, our method applies anyway as we compute the intersection E-polynomial as a sum of actual E-polynomials, for which additivity property holds. 
Notice that, by the purity of the Hodge structure, intersection Betti numbers with compact support are given by
$$ ib_{j,c}=\sum_{p+q=j} ih^{j,p,q}_c, $$ 
where $ih^{j,p,q}_c=\dim\mathrm{Gr}^F_p Gr^W_{p+q}IH_c^j(X)$.  Theorem \ref{poincare} now follows by Poincaré duality.

In the smooth coprime case, the cohomology of these spaces have been widely studied: Poincaré polynomials for $\mathrm{SL}(2, \co)$ were computed by Hitchin in his seminal paper on Higgs bundles \cite{H}, for $\mathrm{SL}(3, \co)$ by Gothen in \cite{Go} and in rank 4 by Garcia-Prada, Heinloth and Schmitt \cite{GPHS}. 
Furthermore, in \cite{HR} Hausel and Rodriguez-Villegas derived a conjectural formula for the $E$-polynomials of twisted $G-$character varieties focusing on $G =\mathrm{GL}(n, \co),\mathrm{SL}(n, \co)$, In \cite{Sc} Schiffmann provided a closed formula for the Poincaré polynomial of the moduli spaces in any coprime rank and degree. Such formula was shown to imply the conjectural formula of Hausel and Rodriguez-Villegas by Mellit in \cite{M}. 

In the singular case, Logares, Mu\~{n}oz and Newstead \cite{LMN} computed the $E-$polynomial of the character varieties of $\mathrm{SL}(2,\co)$ and $\mathrm{GL}(2,\co)$ on curves of genus $g=1,2$, while Martinez and Mu\~{n}oz extended it to $g\geq3$. In \cite{BH} Baraglia and Hekmati gave a new proof of these, extending it to rank 3. Furthermore, they showed how to extend the approach of Hausel and Rodriguez-Villegas used for nonsingular twisted character varieties to the singular (untwisted) case.\\

To the author's knowledge, this is the first result of computation of intersection cohomology for Higgs bundles or character varieties. For vector bundles, where the moduli spaces involved are compact, intersection Betti numbers were computed by Kirwan in \cite{K2}.
The main motivation for this work was provided by the celebrated $P=W$ conjecture by De Cataldo, Hausel and Migliorini (see \cite{dCHM}) which asserts that the \ita{Weight filtration} on the cohomology of the character variety corresponds via non abelian Hodge theorem to the \ita{Perverse filtration} arising from the Hitchin fibration. 
Though the conjecture is formulated for smooth moduli spaces, it would be interesting to see whether an analogue of the $P=W$ conjecture exists in the singular case of moduli of Higgs bundles of non coprime rank and degree and the corresponding character varieties.
Indeed, for moduli spaces with a symplectic resolution, the conjecture has been proved by the author and Mauri in \cite{FM}, relying on the results of this paper. 

The article is organized as follows: in Section 2 we briefly review the theory of intersection cohomology and decomposition theorem; in Section 3 we describe the local geometry of the moduli space focusing on the singularities and their normal cones. In Section 4, we construct a semismall desingularization and apply the decomposition theorem to split the cohomology of the desingularization as a direct sum of the intersection cohomology of $\mdolg$ plus some other summands supported on the singular locus. 
In Section 5, we extend the natural $\co^*$-action on $\mdolg$ to the desingularization and state a localization lemma that yields to the triviality of the weight filtration both on the cohomology of the desingularization and on the intersection cohomology of $\mdolg$.
In Sections 6 and 7, we compute the $E-$polynomial for the intersection cohomology of $\mdolg$ and show that from it, by the triviality of the weight filtration, one can recover the intersection Betti numbers of $\mdolg$ both in the case of $G=\mathrm{SL}(2,\co)$ and $G=\mathrm{GL}(2,\co)$.

\section{Quick review of intersection cohomology and decomposition theorem}

Pure Hodge theory allows to use analytic methods to study algebro-geometric and topological properties of a smooth algebraic variety and comes with the Hodge-Lefschetz package, which includes deep results such as Hard Lefschetz theorem, Poincaré duality and Deligne's theorem for families of projective manifolds.\\
When working with singular or non compact varieties theorems in the Hodge-Lefschetz package fail. To overcome this fact, there are two somewhat complementary approaches: \ita{mixed Hodge theory} and \ita{intersection cohomology}. \\

In mixed Hodge theory, introduced by Deligne in \cite{D} and \cite{D2}, one still investigates the same topological invariant, namely, the cohomology groups, whereas the structure with which it is endowed changes. In particular the $(p,q)$-decomposition of the cohomology of smooth projective varieties is replaced by a more complicated structure. More precisely, the rational cohomology groups are endowed with an increasing filtration $W_{\bullet}$, such that the complexifications of the graded pieces admit a $(p,q)$-decomposition.
\begin{defn}
	Let $X$ be an algebraic variety. A mixed Hodge structure on $H^i(X,\co)$ is the datum of 
	\begin{itemize}
		\item a decreasing filtration $F^{\bullet}$ on $H^i(X,\co)$, called the \ita{Hodge filtration};
		\item an increasing filtration $W_{\bullet}$ on $H^i(X,\mathbb{Q})$, called the \ita{Weight filtration}, such that 
		$$ W_k/W_{k-1}\otimes\co \text{ admits a pure Hodge structure of weight }k\text{ induced by }F^{\bullet}$$
		where the induced filtration on $W_k/W_{k-1}\otimes\co$ is defined as 
		$$ F^p(W_k/W_{k-1}\otimes\co):=(F^p\cap W_k\otimes \co +W_{k-1}\otimes\co)/W_{k-1}\otimes\co.$$
	\end{itemize}
If $W_k/W_{k-1}\otimes\co\cong \bigoplus_{p+q=k}V^{k,p,q}$ we say that a class in $V^{k,p,q}$ has weight $k$ and type $(p,q)$.
\end{defn}
Similarly, one can define a mixed Hodge structure on compactly supported cohomology. This leads to the definition of $E-$polynomials. 
	\begin{defn}
		Let $X$ be an algebraic variety. The \ita{E-polynomial} of $X$ is defined as 
		$$ E(X)(u,v)=\sum_{h=0}^{2\dim X}(-1)^i\sum_{h,p,q} h_c^{i,p,q}u^pv^q,$$
		where $h_c^{i,p,q}=\dim \mathrm{Gr}^F_p Gr^W_{p+q}H_c^i(X)$ and satisfies the following properties:
		\begin{enumerate}[(i)]
			\item if $Z\subset X$ then $E(X)=E(Z)+E(X\setminus Z)$
			\item $E(X\times Y)=E(X)E(Y)$
		\end{enumerate}
	\end{defn}
\begin{oss}
If $X$ is smooth of complex dimension $n$, then mixed Hodge structures are compatible with Poincaré duality, i.e. a class in $H^i(X)$ of weight $k$ and type $(p,q)$ corresponds to a class in $H^{2n-i}_c(X)$ of weight $2n-k$ of type $(n-p,n-q)$.
\end{oss}
\begin{oss}[\gr{Yoga of weights}]\label{yoga}
	In general finding the weight of a cohomology class is a nontrivial task. However, there are some fundamental weight restrictions:
	\begin{enumerate}[i)]
		\item if $X$ is nonsingular, but possibly non-compact, then weights are \ita{high}, i.e.
		$$W_kH^i(X)=0 \text{ for all }k<i;$$
		\item if $X$ is compact but possibly singular then weights are \ita{low} i.e. $$W_kH^i(X)=W_i H^i(X)=H^i(X) \text{ for all }k\geq i.$$
	\end{enumerate}
\end{oss}

In intersection cohomology theory, by contrast, it is the topological invariant which is changed, whereas the $(p,q)$-decomposition turns out to be the same. Intersection cohomology groups are defined as the hypercohomology of some complexes, called \ita{intersection complexes}, that live in the derived category of constructible complexes. Intersection complexes are constructed from local systems defined on nonsingular locally closed subsets of an algebraic variety with a procedure called \ita{intermediate extension} (see \cite[1.4.25, 2.1.9, 2.1.11]{BBD}, \cite{GM}, \cite{GM2}). For a beautiful introduction with also an historical point of view, we refer to \cite{Kl}.

 There is a natural morphism $H^i(X)\rightarrow IH^i(X)$ which is an isomorphism when $X$ has at worst finite quotient singularities.
 Intersection cohomology groups are finite dimensional, satisfy Mayer-Vietoris theorem and K\"{u}nneth formula. Although they are not homotopy invariant, they satisfy analogues of Poincaré duality, Hard Lefschetz theorem and, if $X$ is projective, they admit a pure Hodge structure.
The definition of intersection cohomology is very flexible as it allows for twisted coefficients: given a local system $\mathcal{L}$ on a locally closed nonsingular subvariety $Y$ of $X$ we can define the cohomology groups $IH(\overline{Y},\mathcal{L})$.

\begin{defn}
	Let $X$ be an algebraic variety and let $Y\subset X$ be a locally closed subset contained in the regular part of $X$. Let $\mathcal{L}$ be a local system  on $Y$.
	The \ita{intersection complex $IC_{\overline{Y}}(\mathcal{L})$ associated with $\mathcal{L}$} is a complex of sheaves on $Y$ which extends the complex $\mathcal{L}[\dim Y]$ and is determined up to unique isomorphism in the derived category of constructible sheaves by the conditions
	\begin{itemize}
		\item $\mathcal{H}^j(IC_{\overline{Y}}(\mathcal{L}))=0 \quad 	\text{ for all } j< -\dim Y$,
		\item $\mathcal{H}^{-\dim Y}(IC_{\overline{Y}}(\mathcal{L}_{\mid U}))\cong \mathcal{L}$,
		\item $\dim \mathrm{Supp}\mathcal{H}^j(IC_{\overline{Y}}(\mathcal{L}))<-j, \text{ for all }j>-\dim Y$,
		\item $\dim \mathrm{Supp}\mathcal{H}^j(\mathbb{D}IC_{\overline{Y}}(\mathcal{L}))<-j, \text{ for all }j>-\dim Y$, where $\mathbb{D}IC_{\overline{Y}}\mathcal{L}$ denotes the \ita{Verdier dual} of $IC_{\overline{Y}}\mathcal{L}$.
	\end{itemize}
\end{defn}

\begin{oss}
	Let $X$ be an algebraic variety with regular locus $X_{reg}$. When $\mathcal{L}=\mathbb{Q}_{X_{reg}}$ one just writes $IC_X$ for $IC_X(\mathcal{L})$ and call it \ita{intersection cohomology complex of $X$}. 
	If $X$ is nonsingular, then $IC_X\cong \mathbb{Q}_{X}[\dim X]$.
\end{oss}

\begin{defn}
	Let $X$ be an algebraic variety. The \ita{intersection cohomology groups} of $X$ are defined as  
	$$IH^*(X)=H^{*-\dim X}(X,IC_X).$$
	In general, given any local system $\mathcal{L}$ supported on a locally closed subset $Y$ of $X$, the cohomology groups of $Y$ with coefficients in $\mathcal{L}$ are shifted hypercohomology groups of the intersection complex associated to $\mathcal{L}$:
	$$IH^*(\overline{Y},\mathcal{L})=H^{*-\dim Y}(\overline{Y},IC_{\overline{Y}}(\mathcal{L})).$$
	Taking hypercohomology with compact support, one likewise defines \ita{intersection cohomology groups with compact support} $IH^*_c(X)$ and $IH^*_c(\overline{Y},\mathcal{L})$.
\end{defn}

\begin{oss}
	Here the shift is made so that for a nonsingular variety  intersection cohomology groups coincide with ordinary cohomology groups. 
\end{oss}
\begin{oss}
Just as ordinary cohomology, intersection cohomology groups carry a mixed Hodge structure (see \cite{Sa}). As a result it is possible to define an analogue of $E-$polynomial for intersection cohomology, called \ita{intersection $E-$polynomial}:
$$ IE(X)(u,v)=\sum_{h=0}^{2\dim X}(-1)^i\sum_{h,p,q} ih_c^{i,p,q}u^pv^q$$
where $ih_c^{i,p,q}:=\dim \mathrm{Gr}^p_F Gr^W_{p+q}IH_c^i(X)$.\\
%However, unlike the E-polynomial, the intersection E-polynomial is not a motivic invariant.
\end{oss}

Along with theorems of Hodge-Lefschetz package, intersection cohomology groups satisfy an analogue of Deligne's theorem for projective manifolds, the \ita{decomposition theorem}. The general statement of this theorem is complicated and will not be discussed here (see for example \cite{dCM} for an extensive survey on the topic). 
Roughly speaking, the decomposition theorem asserts that, given a proper map of algebraic varieties $f:X\rightarrow Y$, the derived pushforward of the intersection complex of $X$ splits as a direct sum of the intersection complex of $Y$ and other intersection complexes associated to local systems supported on some nonsingular locally closed  subsets of $Y$. These subsets are called \ita{supports}.\\
In general it is complicated to determine the supports and the local systems appearing in the splitting. However the decomposition theorem takes a particularly simple form when dealing with a special kind of maps, namely \ita{semismall maps}.

\begin{defn}
Let $f:X\rightarrow Y$ be a map of algebraic varieties. A \ita{stratification for f} is a decomposition of $Y$ into finitely many locally closed nonsingular subsets $Y_{\alpha}$ such that $f^{-1}(Y_{\alpha})\rightarrow Y_{\alpha}$ is a topologically trivial fibration. The subsets $Y_{\alpha}$ are called the \ita{strata} of $f$.
\end{defn}

\begin{defn}
Let $f:X\rightarrow Y$ be a proper map of algebraic varieties. We say that $f$ is \ita{semismall} if there exists a stratification $Y=\bigsqcup Y_{\alpha}$ such that for all $\alpha$
$$ d_{\alpha}\leq \dfrac{1}{2}(\dim X-\dim Y_{\alpha})$$
where $d_{\alpha}:=\dim f^{-1}(y_{\alpha})$ for some $y_{\alpha}\in Y_{\alpha}$.
A stratum is called \ita{relevant} if $$ d_{\alpha}= \dfrac{1}{2}(\dim X-\dim Y_{\alpha}).$$
\end{defn}

For semismall maps, the only supports are the relevant strata and their contributions to the pushforward of $IC_X$ consist of nontrivial summands $IC_{\overline{Y}_{\alpha}}(\mathcal{L_{\alpha}})$, where the local systems $\mathcal{L}_{\alpha}$ are given by the top cohomology of the fibres and turn out to have finite monodromy. 
More precisely, let $Y_{\alpha}$ be a relevant stratum, $y\in Y_{\alpha}$ and let $F_1,\ldots, F_l$ be the irreducible $(\dim Y_{\alpha})-$dimensional components of the fibre $f^{-1}(y)$. The monodromy of the $F_i$'s defines a group homomorphism $\rho_{\alpha}:\pi_1(Y_{\alpha})\rightarrow \mathfrak{S}_l$ from the fundamental group of $Y_{\alpha}$ to the group of permutations of the $F^i$'s. The representation $\rho_{\alpha}$ defines a local system $\mathcal{L}_{\alpha}$ on $Y_{\alpha}$. In this case the semisimplicity of the local system $\mathcal{L}_{\alpha}$ is an elementary consequence of the fact that the monodromy factors through a finite group, so by Maschke theorem it is a direct sum of irreducible representations. As a result, the local systems $\mathcal{L}_{\alpha}$ will be semisimple, i.e. they will be a direct sum of simple local systems. With this notation, the statement of the decomposition theorem for semismall maps is the following. For the proof we refer to \cite{BBD}, \cite{Sa} and \cite{dCM1}.

\begin{thm}[\gr{Decomposition theorem for semismall maps}]\label{ssmall}
Let $f:X\rightarrow Y$ be a semismall map of algebraic varieties and let $\Lambda_{rel}$ the set of relevant strata. For each $Y_{\alpha}\in \Lambda_{rel}$ let $\mathcal{L}_{\alpha}$ the corresponding local system with finite monodromy defined above. Then there exists a canonical isomorphism in the derived category of constructible sheaves
$$Rf_*IC_X\cong \bigoplus_{Y_{\alpha}\in \Lambda_{rel}} IC_{\overline{Y}_{\alpha}}(\mathcal{L}_{\alpha}),$$

with $(\mathcal{L}_{\alpha})_y= H^{2(\dim X - \dim Y_{\alpha})}(f^{-1}(y))$ for all $y\in Y_{\alpha}$.\\
Moreover this is an isomorphism of mixed Hodge structures. 
\end{thm}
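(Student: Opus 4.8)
The plan is to prove the statement as intended, reading the left-hand side as the derived pushforward $Rf_*IC_X$ (which, in the application where $X=\tmdol^G$ is nonsingular, equals $Rf_*\mathbb{Q}_X[\dim X]$); note that $IC_X$ literally lives on $X$ while the summands live on $Y$, so a pushforward is understood. The argument splits into three stages: show that $\mathcal{P}:=Rf_*\mathbb{Q}_X[\dim X]$ is a perverse sheaf on $Y$; decompose it into a direct sum of intersection complexes of simple local systems on strata closures; and finally identify exactly which strata and local systems occur.

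First I would establish perversity of $\mathcal{P}$. By proper base change the stalk cohomology at $y\in Y_{\alpha}$ is $\mathcal{H}^j(\mathcal{P})_y\cong H^{j+\dim X}(f^{-1}(y))$, which vanishes unless $-\dim X\leq j\leq 2d_{\alpha}-\dim X$. Hence $\mathrm{Supp}\,\mathcal{H}^j(\mathcal{P})$ meets $Y_{\alpha}$ only when $d_{\alpha}\geq (j+\dim X)/2$, and semismallness gives $\dim Y_{\alpha}\leq \dim X-2d_{\alpha}\leq -j$, which is precisely the support condition for a perverse sheaf. The cosupport condition follows formally: since $X$ is nonsingular, $\mathbb{Q}_X[\dim X]$ is Verdier self-dual, and properness of $f$ yields $\mathbb{D}(Rf_*\mathbb{Q}_X[\dim X])\cong Rf_*\mathbb{D}(\mathbb{Q}_X[\dim X])\cong Rf_*\mathbb{Q}_X[\dim X]$, so $\mathcal{P}$ is self-dual and the support condition for $\mathbb{D}\mathcal{P}=\mathcal{P}$ supplies the cosupport condition. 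Thus $\mathcal{P}$ is a self-dual perverse sheaf.

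Next I would decompose $\mathcal{P}$ into intersection complexes. Since $\mathcal{P}$ is perverse and pure (it is the proper pushforward of the pure complex $\mathbb{Q}_X[\dim X]$, so purity is preserved), it is a semisimple object of the category of perverse sheaves and hence a finite direct sum $\bigoplus IC_{\overline{Z}}(\mathcal{M})$ of intersection complexes of simple local systems. To pin down the summands I would restrict to strata: over the open stratum $Y_{\alpha}$, base change identifies the top nonvanishing local system with $R^{2d_{\alpha}}f_*\mathbb{Q}$, whose stalk $H^{2d_{\alpha}}(f^{-1}(y))$ has the fundamental classes of the $d_{\alpha}$-dimensional components $F_1,\dots,F_l$ of the fibre as a basis and whose monodromy is exactly the permutation representation $\rho_{\alpha}$ defining $\mathcal{L}_{\alpha}$. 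A stratum is relevant precisely when this top local system sits in the extreme perverse degree permitted by the support condition, so that its intermediate extension $IC_{\overline{Y}_{\alpha}}(\mathcal{L}_{\alpha})$ appears as a genuine summand; a non-relevant stratum satisfies the strict inequality $\dim Y_{\alpha}<-j$ in every degree and so contributes no new intersection complex. This produces $\mathcal{P}\cong\bigoplus_{Y_{\alpha}\in\Lambda_{rel}}IC_{\overline{Y}_{\alpha}}(\mathcal{L}_{\alpha})$.

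The hard part will be the semisimplicity step, which is the genuine content of the decomposition theorem. For general maps this needs heavy machinery (Hodge modules, or the relative Hard Lefschetz and purity of \cite{BBD}), but in the semismall case it can be obtained more elementarily following de Cataldo--Migliorini: the splitting of each $IC_{\overline{Y}_{\alpha}}(\mathcal{L}_{\alpha})$ off $\mathcal{P}$ is governed by the \emph{refined intersection form} on the top-dimensional fibre components $F_i$, and the Hodge index theorem on the nonsingular $X$ shows this form is, up to sign, definite, hence nondegenerate; nondegeneracy is equivalent to the direct-sum splitting and simultaneously yields the canonicity of the isomorphism. I expect verifying this nondegeneracy, together with the bookkeeping matching the definite intersection form to each relevant stratum, to be the technically delicate part, whereas perversity and the identification of $\mathcal{L}_{\alpha}$ are comparatively routine.
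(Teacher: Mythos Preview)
The paper does not prove this theorem: it is quoted as a background result from the literature (see the references to \cite{BBD} and \cite{dCM} in the surrounding section), so there is no proof in the paper to compare against. Your proposal is therefore not ``the same as'' or ``different from'' the paper's proof---there simply is none.

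That said, your sketch is a faithful outline of the de Cataldo--Migliorini approach and is essentially correct. A few remarks. First, you are right that the left-hand side of the displayed isomorphism should be read as $Rf_*IC_X$; the paper uses this convention implicitly (compare equation \eqref{dtg}). Second, your argument offers two distinct routes to semisimplicity: the purity argument (proper pushforward of a pure complex is pure, hence semisimple as a perverse sheaf) already suffices and is the \cite{BBD} route, while the refined intersection form argument is the alternative Hodge-theoretic proof of de Cataldo--Migliorini; you do not need both, and presenting them as sequential steps slightly obscures the logic. Third, the perversity computation implicitly assumes $X$ nonsingular (so that $IC_X=\mathbb{Q}_X[\dim X]$ is self-dual); this is fine for the paper's application but is an extra hypothesis beyond the bare statement, which you should flag. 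With those caveats, the outline is sound.
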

%\begin{oss}
%Moreover it is an isomorphism of mixed Hodge structures.
%\end{oss}

\section{Local structure of the moduli space}

Consider a curve $C$ of genus 2 and let $G=\mathrm{GL}(2,\co)$ or $\mathrm{SL}(2,\co)$. We define $\mdolg$ to be the moduli space of $G-$Higgs bundles on $C$: for $G=\mathrm{GL}(2,\co)$ these are just ordinary Higgs bundles of rank 2 and degree 0, while for $G=\mathrm{SL}(2,\co)$ one also asks for the determinant to be trivial.

We shall briefly recall the construction by Simpson of these moduli spaces. 

\begin{itemize}
	\item \cite[Theorem 3.8]{S} Fix a sufficiently large integer $N$ and set $p:=2N-2$.
	There exist a quasi-projective scheme $Q^G$ representing the moduli functor  which parametrizes the isomorphism classes of triples $(V,\Phi,\alpha)$ where $(V,\Phi)$ is a semistable Higgs pair (with $\det V\cong\mathcal{O}_X$, $tr(\Phi)=0$ when $G=\mathrm{SL}(2,\co)$) and $\alpha:\co^p\rightarrow H^0(C,V\otimes \mathcal{O}(N))$ is an isomorphism of vector spaces.\\
	\item \cite[Theorem 4.10]{S} Fix $x\in C$ and let $T^G$ be the frame bundle at $x$ of the universal bundle $\mathcal{V}$ on $Q\times C$ restricted to $x$. Then $G\times \mathrm{GL}(p,\co)$ acts on $Q^G$: indeed $G$ acts as automorphisms of $(V,\Phi)$ while $\mathrm{GL}(p,\co)$ acts on the $\alpha$'s. The action of $\mathrm{GL}(p,\co)$ on $Q^G$ lifts to $T^G$. Since this action is free and every point in $T^G$ is stable with respect to it, one can define
	$$\rdolg=T^G/\mathrm{GL}(p,\co),$$ 
	which represents triples $(V,\Phi,\beta)$ where $\beta$ is an isomorphism $V_x\rightarrow \co^2$.\\
	\item \cite[Theorem 4.10]{S} Every point in $\rdolg$ is semistable with respect to the action of $G$  and the closed orbits correspond to the polystable pairs $(V,\Phi,\beta)$ such that 
	$$(V,\Phi)=(L,\phi)\oplus(M,\psi)$$
	with $L,M\in Jac(C)$ and $\phi,\psi\in H^0(K_C)$. For $G=\mathrm{SL}(2,\co)$, the condition $\det V=\mathcal{O}$ yields $M\cong L^{-1}$, $\psi=-\phi$.
	
	\begin{prop}\cite[Theorem 4.10]{S}
		The GIT quotient $\rdolg \sslash G$ is $\mdolg$. 
	\end{prop}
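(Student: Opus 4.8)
The plan is to reduce the statement to Simpson's original GIT description of $\mdolg$ as a quotient of $Q^G$, by exploiting the two commuting group actions on the doubly–framed parameter space $\tilde{Q}^G$. I would keep both presentations of the data in play: on one side, by Simpson's construction the moduli space is the good quotient $\mdolg = Q^G // GL(p,\co)$, with $GL(p,\co)$ acting on the framing $\alpha$; on the other side, $\tilde{Q}^G$ carries the commuting action of $G\times GL(p,\co)$, and since the $GL(p,\co)$-action on $\tilde{Q}^G$ is free with every point stable (the cited Thm. 4.10), the quotient $\rdolg = \tilde{Q}^G / GL(p,\co)$ is geometric.

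The first genuine step is to read off the geometry of the forgetful map $\tilde{Q}^G\to Q^G$ that drops the frame $\beta$ at $x$. This map exhibits $\tilde{Q}^G$ as the frame bundle of $V_x$, hence as a principal $G$-bundle over $Q^G$; in particular the $G$-action along the fibres is free, so $\tilde{Q}^G / G = Q^G$ is again a geometric quotient. Combining this with the freeness of the $GL(p,\co)$-action gives two clean ways to pass between $\tilde{Q}^G$ and the two spaces of interest.

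The heart of the argument is then the compatibility of iterated GIT quotients for commuting reductive actions sharing a common linearization. Because $G$ and $GL(p,\co)$ commute and the linearizations are chosen compatibly, the good quotient of $\tilde{Q}^G$ by the product may be taken in either order, which yields the chain
\begin{equation*}
\begin{split}
\rdolg // G &= \bigl(\tilde{Q}^G / GL(p,\co)\bigr) // G = \tilde{Q}^G // (G\times GL(p,\co)) \\
&= \bigl(\tilde{Q}^G / G\bigr) // GL(p,\co) = Q^G // GL(p,\co) = \mdolg.
\end{split}
\end{equation*}
Here the outer equalities use that a geometric quotient by a free action may be performed first without disturbing the GIT quotient by the other factor, and the middle one uses $\tilde{Q}^G / G = Q^G$ from the previous step. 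To confirm the expected moduli interpretation I would finally match closed $G$-orbits in $\rdolg$ with polystable objects: a point of $\rdolg$ is a framed triple $(V,\Phi,\beta)$, the $G$-action moves the frame $\beta$, and its orbit is closed exactly when $(V,\Phi)$ is polystable, i.e. $(V,\Phi)=(L,\phi)\oplus(M,\psi)$ as recorded in the theorem; since a good quotient separates closed orbits, the points of $\rdolg // G$ are precisely isomorphism classes of polystable Higgs bundles, that is $S$-equivalence classes of semistable ones.

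The main obstacle is the GIT bookkeeping in the displayed chain rather than any conceptual difficulty: one must verify that the linearizations on $\tilde{Q}^G$ restrict compatibly under both projections, that the scalar subgroups of $GL(p,\co)$ acting trivially on $(V,\Phi)$ are correctly accounted for, and --- in the $SL(2,\co)$ case --- that the determinant/central factor relating the $GL(2,\co)$-frame bundle to the $G$-action does not obstruct the identity $\tilde{Q}^G / G = Q^G$. Once these compatibilities are in place, each equality in the chain is an instance of quotients-in-stages, and the proposition follows.
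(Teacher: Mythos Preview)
Your outline is reasonable and is essentially the quotients-in-stages argument underlying Simpson's construction, but you should be aware that the paper does not prove this proposition at all: it is stated purely as a citation of \cite[Thm.~4.10]{S}, with no accompanying argument. So there is nothing to compare your proof against in the paper itself; the author is simply invoking Simpson's result as a black box.

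As for the content of your sketch: the chain of identifications
\[
\rdolg // G \;=\; (\tilde{Q}^G/GL(p,\co))//G \;=\; (\tilde{Q}^G/G)//GL(p,\co) \;=\; Q^G//GL(p,\co) \;=\; \mdolg
\]
is indeed the shape of the argument, and you correctly flag the two places that need care. First, the identification $\tilde{Q}^G/G = Q^G$ requires, in the $G=SL(2,\co)$ case, that $\tilde{Q}^{SL}$ be the $SL(2,\co)$-frame bundle rather than the full $GL(2,\co)$-frame bundle; this is implicit in the trivialization $\det V\cong\mathcal{O}_C$ but is not made explicit in the paper's summary of Simpson's setup. Second, the equality $Q^G//GL(p,\co)=\mdolg$ is not stated in the paper's recap either, so you are relying on a piece of Simpson's construction that the paper has not explicitly recorded. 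Both points are handled in \cite{S}, so if you want a self-contained write-up you should cite the relevant steps there rather than the paper under review.
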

	
\end{itemize}

As it is well known (for example see \cite[section 1]{S}), the singularities of $\mdolg$ correspond to strictly semistable bundles. If a Higgs bundle $(V,\Phi)$ is strictly semistable, then there exists a $\Phi$-invariant line bundle $L$ of degree 0. 

\begin{prop}
	Let $\mdolg$ be the moduli space of $G-$Higgs bundles.
	\begin{enumerate}[(i)]
		\item If $G=\mathrm{GL}(2,\co)$ then the singularities of $\mdolg$ are
        \begin{itemize}
         	\item $\Sigma^{\mathrm{GL}}:=\{(V,\Phi)\mid (V,\Phi)=(L,\phi)\oplus(M,\psi)$ with $L,M\in Jac(C)$ and $\phi,\psi\in H^0(K_C)\}.$
	     	\item $\Omega^{\mathrm{GL}}:=\{(V,\Phi)\mid (V,\Phi)=(L,\phi)\oplus (L,\phi)$ with $L\in Jac(C),$ and $\phi\in H^0(K_C)$\}.
         \end{itemize}
        \item If $G=\mathrm{SL}(2,\co)$ the singularities of $\mdolg$ are 
        \begin{itemize}
        	\item $\Sigma^{\mathrm{SL}}:=\{(V,\Phi)\mid (V,\Phi)=(L,\phi)\oplus(L^{-1},\phi)$ with $L\in Jac(C)$ and $\phi\in H^0(K_C)\}.$
        	\item $\Omega^{\mathrm{SL}}:=\{(V,\Phi)\mid (V,\Phi)=(L,0)\oplus (L,0)$ with $L^2= \mathcal{O}\in Jac(C)$\}.
        \end{itemize}
     \end{enumerate}
\end{prop}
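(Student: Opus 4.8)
The plan is to identify the singular locus with the strictly semistable locus, to pass to the unique polystable representative of each S-equivalence class, and then to stratify according to the $G$-stabilizer of that representative --- this is precisely the Luna stratification that underlies O'Grady's desingularization. Two facts recalled just before the statement do most of the work: the singular points of $\mdolg$ are exactly the strictly semistable ones, and, by the description of the closed $G$-orbits in $\rdolg$, each such point has a unique polystable representative of the shape $(V,\Phi)=(L,\phi)\oplus(M,\psi)$ with $L,M\in Jac(C)$ and $\phi,\psi\in H^0(K_C)$.

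For $G=GL(2,\co)$ this splitting is already the defining condition of $\Sigma^{GL}$, so the first item of $(i)$ is immediate. For $G=SL(2,\co)$ I would impose the two constraints carried by the group: $\det V\cong\mathcal{O}$ forces $M\cong L^{-1}$, and $tr(\Phi)=0$ forces $\psi=-\phi$. This reproduces $\Sigma^{SL}$ (with the Higgs field on the second factor equal to $-\phi$, the sign being the one consistent with $\Omega^{SL}$ below). These identifications are formal consequences of the polystable splitting together with the trace and determinant conditions.

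The finer stratum $\Omega^G$ I would pin down by computing the stabilizer $\mathrm{Stab}_G(V,\Phi)$ and locating where it jumps. The dichotomy is whether the two Higgs line subbundles are isomorphic as Higgs pairs: if $(L,\phi)\not\cong(M,\psi)$ then any automorphism commuting with $\Phi$ preserves each summand, so the stabilizer is a maximal torus ($\co^*\times\co^*$ for $GL$, $\co^*$ for $SL$); if the two summands coincide, the stabilizer jumps to the full group $G$ acting on the two-dimensional multiplicity space. This jump singles out the deepest locus: for $GL$ the coincidence $(L,\phi)\cong(M,\psi)$ gives $\Omega^{GL}=\{(L,\phi)\oplus(L,\phi)\}$, while for $SL$ the coincidence $(L,\phi)\cong(L^{-1},-\phi)$ forces simultaneously $L\cong L^{-1}$, i.e. $L^2\cong\mathcal{O}$, and $\phi=-\phi$, i.e. $\phi=0$, yielding $\Omega^{SL}=\{(L,0)\oplus(L,0):L^2\cong\mathcal{O}\}$ --- a finite set of points, in contrast with the positive-dimensional $\Omega^{GL}$.

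The technically demanding step, and the one I expect to be the main obstacle, is justifying that this stabilizer partition really is the stratification of $\mathrm{Sing}(\mdolg)$ by singularity type rather than a bookkeeping device. For this I would invoke Luna's étale slice theorem: near a polystable point the moduli space is étale-locally the GIT quotient of the deformation space $\mathbb{H}^1$ of the complex $\mathrm{End}(V)\xrightarrow{[\cdot,\Phi]}\mathrm{End}(V)\otimes K_C$, cut out by the quadratic part of the obstruction in $\mathbb{H}^2$, modulo $\mathrm{Stab}_G(V,\Phi)$. One must check that this local model is singular along all of $\Sigma^G$ and that its singularity deepens precisely along $\Omega^G$, where the torus is replaced by the full reductive group. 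This local analysis is exactly the normal-cone computation carried out in the next section, so here I would only set up the slice and record the stabilizer types, leaving the explicit description of the cones to follow.
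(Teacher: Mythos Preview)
Your proposal is correct and, in fact, more detailed than what the paper does: the paper states this proposition without a formal proof, relying on the two sentences just before it (singularities of $\mdolg$ correspond to strictly semistable bundles, and a strictly semistable Higgs bundle admits a $\Phi$-invariant degree-$0$ line subbundle) together with Simpson's description of the closed orbits as polystable pairs. Your stabilizer computation distinguishing $\Sigma^G\setminus\Omega^G$ from $\Omega^G$ is exactly the right mechanism, and the paper uses precisely these stabilizers ($\co^*$ versus the full $G$) in the subsequent normal-cone analysis; you also correctly catch that the Higgs field on the second summand in $\Sigma^{SL}$ should carry the sign $-\phi$, consistent with the trace condition and with how the paper writes it elsewhere.
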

\begin{proof}
	Clearly, Higgs bundles in $\Sigma^G$ are semistable but not stable thus they lie in the singular locus. The result follows after noticing that nontrivial extensions as Higgs bundles do not appear in $\mdol^G$ as their $G$-orbit in $\rdol^G$ is not closed. 
	\end{proof}
Observe that in both cases $\Omega^G\subset \Sigma^G$. 
In the general case of $G=\mathrm{GL}(2,\co)$, $\Sigma^G$ is parametrized by the symmetric product $Sym^2(Jac(C)\times H^0(K_C))$ where $\mathbb{Z}_2$ acts as the involution that switches summands. $\Omega^G$ is given by the fixed points of the involution and it is parametrized by $Jac(C)\times H^0(K_C)$.\\
In the trivial determinant case, when $G=\mathrm{SL}(2,\co)$, $\Sigma^G\cong  (Pic^0(C)\times H^0(K_C))/\mathbb{Z}_2$ and $\Omega^G$ consists again of the fixed points of the involution, which are the 16 roots of the trivial bundle.

\subsection{Local structure of singularities}

Remarkably, the singularities of $\mdol^G$ have the same local description as the singularities of O' Grady's examples in \cite{OG}, \cite{OG2} (see also \cite{BS} and \cite{KY}). Thanks to this fact, one can copy O' Grady's method almost verbatim to obtain a desingularization of $\mdolg$. 
In this subsection the singularities of $\mdol^G$ and their normal cones are studied, leading to the construction of a desingularization and the proof of its semismallness.

Let $G=\mathrm{SL}(2,\co)$ or $\mathrm{GL}(2,\co)$ and let $\mathfrak{g}$ be its Lie algebra.
We shall describe the singularities of the moduli space of Higgs bundles $\mdol^G$ with $G=\mathrm{GL}(2,\co)$. The trivial determinant case of $G=\mathrm{SL}(2,\co)$ is analogous, provided that we replace $End(V)$ by $End_0(V)$. \\

Let $A^i$ denote the sheaf of $\cinfty$ $i-$forms on $C$. For a polystable Higgs pair $(V,\Phi)$ consider the complex
\begin{equation}\label{diag}
	\begin{tikzpicture}[anchor=base,baseline]
	\node (O) at (0,0) {0};
	\node (A) at (3.5,0) {$End(V)\otimes A^0$};
	\node (B) at (7,0) {$End(V)\otimes A^1$};
	\node (C) at (10.5,0) {$End(V)\otimes A^2$};
	\node (O') at (14,0) {0};
	\draw[->,thick] (O) -- (A) node [midway,above] {};
	\draw[->,thick] (A) -- (B) node [midway,above] {};
	\draw[->,thick] (B) -- (C) node [midway,above] {};
	\draw[->,thick] (C) -- (O') node [midway,above] {};
	\end{tikzpicture}
\end{equation}
with differential $D''=\bar{\partial}+[\phi,-]$. Splitting in $(p,q)$-forms, the cohomology of this complex is equal to the hypercohomology of the double complex

\begin{center}	
	\begin{tikzpicture}[descr/.style={fill=white,inner sep=1.5pt}]
	\matrix (m) [
	matrix of math nodes,
	row sep=2.5em,
	column sep=2.5em,
	text height=1.5ex, text depth=0.25ex
	]
	{  &  0 & 0 &0 &\\
		0& 0& End(V)\otimes A^{1,0}& End(V)\otimes A^{1,1}  &0 \\
		0& End(V)\otimes A^0& End(V)\otimes A^1& End(V)\otimes A^2  &0 \\
		0& End(V)\otimes A^{0,0}& End(V)\otimes A^{0,1}& 0&0 \\
		&  0 & 0 &0 &\\
	};
	
	\path[overlay,->, font=\scriptsize,>=latex]
    (m-2-1) edge (m-2-2)
     (m-3-1) edge (m-3-2)
     (m-4-1) edge (m-4-2)
   
	(m-2-2) edge (m-2-3)
	(m-2-3) edge node[above] {$\spartial$}(m-2-4)
	(m-2-4) edge (m-2-5)
	(m-3-2) edge node[above] {$D''$} (m-3-3)
	(m-3-3) edge node[above] {$D''$} (m-3-4)
	(m-3-4) edge (m-3-5)
	(m-4-2) edge node[above] {$\spartial$} (m-4-3)
	(m-4-3) edge (m-4-4)
	(m-4-4) edge (m-4-5)
	
	(m-1-2) edge (m-2-2)
	(m-2-2) edge (m-3-2)
	(m-3-2) edge node[right] {$=$}(m-4-2)
	(m-4-2) edge (m-5-2)
	
	(m-1-3) edge (m-2-3)
	(m-2-3) edge (m-3-3)
	(m-3-3) edge (m-4-3)
	(m-4-3) edge (m-5-3)
	
	(m-1-4) edge (m-2-4)
	(m-2-4) edge node[right]{$=$}(m-3-4)
	(m-3-4) edge (m-4-4)
	(m-4-4) edge (m-5-4);
	\end{tikzpicture}
	
\end{center}

This means that the cohomology groups $T^i$ of \eqref{diag} fit the long exact sequence
\begin{equation}\label{ti}
\begin{tikzpicture}[baseline ,descr/.style={fill=white,inner sep=1.5pt}]
\matrix (m) [
matrix of math nodes,
row sep=1em,
column sep=2.5em,
text height=1.5ex, text depth=0.25ex
]
{ 0 &  T^0 & H^0(End(V)) &H^0(End(V)\otimes K_C)& \\
	T^1 & H^1(End(V)) &H^1(End(V)\otimes K_C)&  T^2 &0.\\
};

\path[overlay,->, font=\scriptsize,>=latex]
(m-1-1) edge (m-1-2)
(m-1-2) edge (m-1-3)
(m-1-3) edge node[above]{$[\Phi,-]$}(m-1-4)
(m-1-4) edge[out=355,in=175] node[descr,yshift=0.3ex]{}(m-2-1)
(m-2-1) edge (m-2-2)
(m-2-2) edge (m-2-3)
(m-2-3) edge node[above]{$[\Phi,-]$} (m-2-4)
(m-2-4) edge (m-2-5);

\end{tikzpicture}
\end{equation}
%\begin{oss}
%When we write $H^i(End_0(V))$ we hat that we take holomorphic sections. 
%	\end{oss}
\begin{oss}
	Observe also that, by deformation theory for Higgs bundles, the $T^i$'s parametrize extensions of Higgs bundles, i.e. $T^i=\mathrm{Ext}^i_H(V,V)$ in the category of Higgs sheaves.\footnote{ Since we are considering extensions in the category of Higgs sheaves, it would be more natural to denote them by $\mathrm{Ext}^i((V,\Phi),(V,\Phi))$. However, since there is no ambiguity on the Higgs fields involved, in order to lighten the notation we decided just to use the pedex $H$ to distinguish extensions as Higgs bundles from those as vector bundles.}\\ %Moreover $Ext^1_H(V,V)$ is precisely the Zariski tangent space to $\mdol^G$. \\
	In the trivial determinant case one has to consider traceless extensions $\mathrm{Ext}^i_H(V,V)^0$.
\end{oss}

The following results, due to Simpson, provide a local description of $\mdolg$ and of the normal cones of the singular loci in terms of extensions.
\begin{thm}\cite[Theorem 10.4]{S}\label{cono} Consider $G$ acting on $\rdolg$ and suppose $v=(V,\phi,\beta)\in \rdolg$ is a point in a closed orbit. Let $C$ be the quadratic cone in $T^1$ defined by the map $\eta\mapsto [\eta,\eta]$ (where $[-,-]$ is the graded commutator) and $\mathfrak{h}^{\perp}$ be the perpendicular space to the image of $T^0$ in $\mathfrak{g}$ under the morphism $H^0(End(V))\rightarrow \mathfrak{g}$ . Then the formal completion $(\rdolg,(V,\Phi\beta))\hat{}$ is isomorphic to the formal completion $(C\times \mathfrak{h}^{\perp},0)\hat{}.$
Furthermore if $\mathcal{U}$ is the normal slice at $v$ to the $G$-orbit in $\rdolg$, then 
$$(\mathcal{U},v)\hat{}\cong (C,0)\hat{}.$$
\end{thm}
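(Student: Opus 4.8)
The plan is to combine the deformation theory of the Higgs pair $(V,\Phi)$ with Luna's étale slice theorem for the reductive group $G$. Since $v$ lies in a closed orbit, the pair $(V,\Phi)$ is polystable and (by Simpson's nonabelian Hodge correspondence) carries a harmonic metric; this analytic input is what makes the deformation theory rigid enough for the statement. The deformations of $(V,\Phi)$ are governed by the complex $(1)$, regarded as a differential graded Lie algebra with differential $D''=\spartial+[\phi,-]$ and bracket given by the graded commutator combined with the wedge product of forms; its cohomology groups are precisely the $T^i=Ext^i_H(V,V)$ appearing in the exact sequence $(2)$. First I would show that the formal (Kuranishi) moduli of $(V,\Phi)$ is cut out inside $T^1$ by the quadratic equation $[\eta,\eta]=0$, i.e. is the formal germ of the cone $C$ at the origin.

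For this Kuranishi step I would solve the Maurer--Cartan equation $D''\eta+\tfrac12[\eta,\eta]=0$ formally, order by order in $\eta\in T^1$. A harmonic (Hodge-theoretic) splitting of the complex $(1)$, furnished by the harmonic metric, together with the associated Green-type homotopy operator inverting $D''$ on its image, produces by the standard recursion a formal solution whose obstructions land in $H^2=T^2$ and are governed by the quadratic term. The decisive point is \emph{formality}: the Kähler-type identities attached to the harmonic connection on $End(V)$ show that the DGLA of $(1)$ is formal, i.e. quasi-isomorphic to its cohomology $(T^{\bullet},0,[-,-])$. By the Goldman--Millson theorem, formality forces the germ of the deformation functor to coincide with the germ of the quadratic cone $\{\,\eta\in T^1 : [\eta,\eta]=0\,\}=C$, all higher-order obstructions being removable by a formal change of coordinates; this yields Kuranishi space $\cong (C,0)\hat{}$.

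Next I would feed this into the slice theorem. The stabilizer $H=\mathrm{Stab}_G(v)$ is reductive, and its Lie algebra $\mathfrak{h}$ is exactly the image of $T^0=H^0(\text{Higgs endomorphisms})$ under the evaluation map $H^0(End(V))\to\mathfrak{g}$ determined by the frame $\beta$: an endomorphism commuting with $\Phi$ exponentiates to an automorphism of $(V,\Phi)$, whose value at $x$ read through $\beta$ is the corresponding element of $H\subset G$. Luna's theorem then provides an $H$-invariant normal slice $\mathcal{U}$ at $v$ whose formal completion is identified with the Kuranishi space, giving the second assertion $(\mathcal{U},v)\hat{}\cong (C,0)\hat{}$. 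For the first assertion I would invoke the orbit--slice decomposition $G\times_H\mathcal{U}\to\rdolg$, étale near $v$: the tangent space splits as $T_v\rdolg=T_v(Gv)\oplus T_v\mathcal{U}$, with the smooth orbit direction $T_v(Gv)\cong\mathfrak{g}/\mathfrak{h}\cong\mathfrak{h}^{\perp}$. Hence the formal completion of $\rdolg$ at $v$ is the product of the formal germ of the orbit, $(\mathfrak{h}^{\perp},0)\hat{}$, with the formal germ of the slice $(C,0)\hat{}$, giving $(\rdolg,v)\hat{}\cong (C\times\mathfrak{h}^{\perp},0)\hat{}$.

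The main obstacle is the formality step: collapsing the a priori infinite sequence of Maurer--Cartan obstructions down to the single quadratic term $[\eta,\eta]$. This is not formal nonsense but rests on genuine analysis---the existence of the harmonic metric on the polystable pair and the resulting Kähler identities, which supply the $\spartial\spartial$-type splitting underlying Goldman--Millson formality. By contrast, checking that evaluation identifies $\mathfrak{h}$ with the image of $T^0$, and that $H$ is reductive so that Luna's theorem applies, are comparatively routine once the polystability and harmonic description of $(V,\Phi)$ are available.
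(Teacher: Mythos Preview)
The paper does not prove this theorem: it is quoted verbatim as \cite[Thm.~10.4]{S} and used as a black box, so there is no ``paper's own proof'' to compare against. Your sketch is essentially the argument Simpson gives in the cited reference: controlling deformations by the DGLA $(End(V)\otimes A^{\bullet},D'',[\,,\,])$, invoking formality of this DGLA via the harmonic metric on the polystable pair to reduce the Kuranishi space to the quadratic cone $C$ (Goldman--Millson), and then combining with Luna's \'etale slice theorem to split off the orbit direction $\mathfrak{h}^{\perp}\cong\mathfrak{g}/\mathfrak{h}$. So your outline is both correct and faithful to the original source.

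One small point worth tightening: you identify $T_v(Gv)\cong\mathfrak{g}/\mathfrak{h}\cong\mathfrak{h}^{\perp}$, which is fine, but note that the statement of the theorem gives the product $C\times\mathfrak{h}^{\perp}$ as a model for the formal completion of $\rdolg$ itself, not of a neighbourhood of the orbit; this is consistent with your decomposition because the orbit is smooth and the slice is transverse, so the product structure on formal completions follows. The genuinely nontrivial analytic input, as you correctly flag, is the formality step, and your identification of the harmonic metric and the resulting K\"ahler-type identities as the mechanism is exactly right.
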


\begin{prop}\cite[Proposition 10.5]{S}
	Let $w=(V,\Phi)$ be a point $\mdolg$ and let $C$ be the quadratic cone in a point $(V,\Phi,\beta)\in \rdolg$ in the $G$-orbit of $w$. Then the formal completion of $\mdolg$ at $w$ is isomorphic to the formal completion of the GIT quotient $C\sslash H$ of the cone by the stabilizer of $(V,\Phi,\beta)$.
\end{prop}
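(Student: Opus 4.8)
The plan is to derive this formal-local description from Theorem \ref{cono} together with Luna's étale slice theorem applied to the good quotient $\pi\colon\rdolg\to\rdolg//G=\mdolg$. Since $(V,\Phi)$ is polystable, the point $v=(V,\Phi,\beta)$ lies in a closed $G$-orbit, and therefore its stabilizer $H=\mathrm{Stab}_G(v)\cong\mathrm{Aut}(V,\Phi)$ is reductive: for instance a torus $\co^*\times\co^*$ over the generic point of $\Sigma^{GL}$ and all of $GL(2,\co)$ over $\Omega^{GL}$, and correspondingly $\co^*$ over $\Sigma^{SL}$ and $SL(2,\co)$ over $\Omega^{SL}$. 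Restricting to a $G$-invariant affine neighbourhood of the orbit, Luna's theorem produces an $H$-stable slice through $v$ — which we take to be the normal slice $\mathcal{U}$ to the orbit — together with a strongly étale $H$-equivariant morphism $G\times_H\mathcal{U}\to\rdolg$. Passing to good quotients, this descends to an étale morphism $\mathcal{U}//H\to\mdolg$ carrying the image of $v$ to $w$. As an étale morphism induces an isomorphism on formal completions, I obtain
$$(\mdolg,w)\hat{}\;\cong\;(\mathcal{U}//H,[v])\hat{}.$$

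Next I would feed in the description of the slice from Theorem \ref{cono}, which gives $(\mathcal{U},v)\hat{}\cong(C,0)\hat{}$. The crucial extra fact I must retain is that this identification is $H$-equivariant: the $H$-action on $\mathcal{U}$ coming from the linearization corresponds, under Simpson's harmonic/Kuranishi construction of the slice, to the linear action of $H$ on the tangent space $T^1$ by conjugation of automorphisms, and this action manifestly preserves the quadratic cone $C=\{\eta:[\eta,\eta]=0\}\subset T^1$. Granting this equivariance, the completed local models $(\mathcal{U},v)\hat{}$ and $(C,0)\hat{}$ coincide as formal $H$-schemes, so the remaining task is to push this isomorphism through the quotient by $H$.

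The step that requires genuine care, and which I expect to be the main obstacle, is that forming the good quotient commutes with formal completion. Here reductivity of $H$ and the characteristic-zero hypothesis are essential: the Reynolds operator splits $\mathcal{O}(\mathcal{U})^H\hookrightarrow\mathcal{O}(\mathcal{U})$ as $\mathcal{O}(\mathcal{U})^H$-modules, so the invariant functor $(-)^H$ is exact, and since completion is flat the $\mathfrak{m}_w$-adic completion of $\mathcal{O}(\mathcal{U})^H$ agrees with the $H$-invariants of the corresponding completion of $\mathcal{O}(\mathcal{U})$. Applying the $H$-equivariant isomorphism of completed rings from Theorem \ref{cono} and then taking $H$-invariants therefore yields
$$(\mathcal{U}//H,[v])\hat{}\;\cong\;(C//H,0)\hat{}\;=\;(C/H,0)\hat{}.$$
Chaining this with the Luna isomorphism of the first paragraph gives $(\mdolg,w)\hat{}\cong(C/H,0)\hat{}$, as asserted. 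The only points left to check in full are the reductivity of the stabilizers $H$ (a short case analysis over $\Sigma^G$ and $\Omega^G$) and the compatibility of completion with the quotient map, both of which are routine once the $H$-equivariance of Theorem \ref{cono} has been recorded.
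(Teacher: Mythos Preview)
The paper does not give its own proof of this proposition: it is simply quoted from Simpson \cite[Prop.~10.5]{S}, so there is no argument in the paper to compare against. Your outline is nonetheless correct and is precisely the standard derivation underlying Simpson's statement---Luna's \'etale slice theorem applied to the closed orbit of $v$ in $\rdolg$, combined with the $H$-equivariant formal isomorphism $(\mathcal{U},v)\hat{}\cong(C,0)\hat{}$ from Theorem~\ref{cono}, plus the fact that taking $H$-invariants (for $H$ reductive in characteristic~$0$) commutes with completion. Nothing is missing from your sketch; you have correctly isolated the two points that need checking (equivariance of the Kuranishi model and compatibility of completion with the good quotient), and both are indeed routine.
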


Observe that since there is a local isomorphism $End V\cong \mathfrak{g}$, an element of $T^1$ can be thought of as a matrix in $\mathfrak{g}$ with coefficient in $H^1(C)\cong H^0(K_C)\oplus H^1(\mathcal{O})$.
In this interpretation, the bracket in theorem \ref{cono} is the Lie bracket of $\mathfrak{g}$ coupled with the perfect pairing 
$$ H^0(K_C)\times H^1(\mathcal{O})\rightarrow H^1(K_C).$$

\subsubsection{Interpretation with extensions}

It is also possible to describe the spaces $T^i$ and the graded commutator more explicitly: consider the Higgs bundle $(V,\Phi)$ as an extension
$$0\rightarrow (L,\phi)\rightarrow (V,\Phi)\rightarrow (M,\psi)\rightarrow 0.$$
The deformation theory of the above Higgs bundle is controlled by the hypercohomology of the complex 
$$\begin{array}{llll}
\mathcal{C}^{\bullet}:&M^{-1}L &\overset{\theta}{\longrightarrow} &M^{-1}L\otimes K_C\\
& f &\longmapsto & \phi f - f\psi\\
\end{array}$$
and there is a long exact sequence 
%\[
%\xymatrix{ 
%	0\ar[r]&\mathrm{Ext}^0_H(L,M)\ar[r]&H^0(M^{-1}L)\ar[r]^{\theta \quad} &H^0(M^{-1}L\otimes K_C)\ar[r]&\\
%	\mathrm{Ext}^1_H(L,M)\ar[r]& H^1(M^{-1}L)\ar[r]^{\theta\quad} &H^1(M^{-1}L\otimes K_C)\ar[r]&\mathrm{Ext}^2_H(L,M)\ar[r] &0
%}
%\]
\begin{equation}\label{dc}
\begin{tikzpicture}[baseline, descr/.style={fill=white,inner sep=1.5pt}]
\matrix (m) [
matrix of math nodes,
row sep=1.5 em,
column sep=2.5em,
text height=1.5ex, text depth=0.25ex
]
{ 0 & \mathrm{Ext}^0_H(L,M)  & H^0(M^{-1}L) &H^0(M^{-1}L K_C)&\\
\mathrm{Ext}^1_H(L,M)	& H^1(M^{-1}L) & H^1(M^{-1}L K_C) &  \mathrm{Ext}^2_H(L,M)&0 \\
};

\path[overlay,->, font=\scriptsize,>=latex]
(m-1-1) edge (m-1-2)
(m-1-2) edge (m-1-3)
(m-1-3) edge node[above]{$\theta$} (m-1-4)
(m-1-4) edge[out=355,in=175] node[descr,yshift=0.3ex] {}(m-2-1)
(m-2-1) edge (m-2-2)
(m-2-2) edge node[above]{$\theta$} (m-2-3)
(m-2-3) edge  (m-2-4)
(m-2-4) edge (m-2-5);
\end{tikzpicture}
\
\end{equation}
where $\mathrm{Ext}^{i}_H(L,M):=\mathbb{H}^i(\mathcal{C}^{\bullet}$) are extensions of $(M,\psi)$ with $(L,\phi)$ as Higgs sheaves.\\
Observe that 
\begin{equation}\label{ext}
\mathrm{Ext}^i_H(V,V)=\mathrm{Ext}^i_H(L,L)\oplus \mathrm{Ext}^i_H(L,M)\oplus \mathrm{Ext}^i_H(M,L)\oplus \mathrm{Ext}^i_H(M,M).
\end{equation}
When considering bundles with trivial determinant and traceless endomorphisms, $M=L^{-1}$ and $\psi=-\phi$. Moreover 
$$\mathrm{Ext}^i_H(V,V)^0=\mathrm{Ext}^i_H(L,L)\oplus \mathrm{Ext}^i_H(L,L^{-1})\oplus \mathrm{Ext}^i_H(L^{-1},L). \footnote{Here the terms $\mathrm{Ext}^i_H(L^{-1},L^{-1})$ are not considered because of the traceless condition.}$$
On $\mathrm{Ext}$ groups there is a natural cup product, called the \ita{Yoneda product},
$$
\begin{array}{cclc}
Yon: &\mathrm{Ext}^1_H(V,V)\times \mathrm{Ext}^1_H(V,V) &\rightarrow &\mathrm{Ext}^2_H(V,V)^0\\
& (\alpha,\beta) &\mapsto & \alpha\cup \beta\\	
\end{array}
$$
and its associated Yoneda square
$$\Upsilon: \mathrm{Ext}^1_H(V,V)\rightarrow \mathrm{Ext}^2_H(V,V)^0, \quad \alpha\mapsto \alpha \cup \alpha.$$

Thinking of elements in $\mathrm{Ext}^1_H(V,V)$ locally as matrices of 1-forms in $\mathfrak{g}$, such a product coincides with the graded commutator in theorem \ref{cono}. 
This is precisely the same situation described in \cite[Section  1.3]{OG}: in fact, by means of decomposition \eqref{ext}, Yoneda product reads as
$$
\begin{array}{clc}
\mathrm{Ext}^1_H(L,L)\oplus \mathrm{Ext}^1_H(M,L)\oplus \mathrm{Ext}^1_H(L,M)\oplus \mathrm{Ext}^1(M,M)&\xrightarrow{\Upsilon} &\mathrm{Ext}^2_H(L,L)\oplus \mathrm{Ext}^2_H(M,L)\oplus \mathrm{Ext}^2_H(L,M)\\
 (a,b,c,d)&\mapsto& (b\cup c, a \cup b+b\cup d, c\cup a +d\cup c ).\\
\end{array}
$$

%The proof will proceed in several steps and lemmas. If want to use the strategy suggested by Simpson in theorem \eqref{cono}, we need to find the vector spaces $T^i$ and find the quadratic cone in $T^1$ defined by the zero locus of the Yoneda square. 
\subsection{Normal cones of $\Sigma^G$ and $\Omega^G$}
\subsubsection{Cones of elements in $\Sigma^G$}
\begin{prop}\label{normalcones}
	Let $(V,\Phi)$ be an element of $\Sigma^{\mathrm{GL}}\setminus \Omega^{\mathrm{GL}}$. The spaces $\mathrm{Ext}^i_H(V,V)$ are 
	$$ 
	\begin{array}{cl}
	\mathrm{Ext}^0_H(V,V) &=\mathrm{Ext}^0_H(L,L)\oplus \mathrm{Ext}^0_H(M,M) \cong \co^2\\
	\mathrm{Ext}^1_H(V,V) &=\mathrm{Ext}^1_H(L,L) \oplus \mathrm{Ext}^1_H(M,L)\oplus \mathrm{Ext}^1_H(L,M) \oplus \mathrm{Ext}^1(M,M)\cong \co^{12} \\
	\mathrm{Ext}^2_H(V,V) &=\mathrm{Ext}^2_H(L,L) \oplus \mathrm{Ext}^2_H(M,M) \cong\co^2\\
	\end{array}
	$$
	Moreover the normal cone to the orbit of $\Sigma^{\mathrm{GL}}$ in $\rdol^{\mathrm{GL}}$ is $\Upsilon^{-1}(0)$ and its fibre in $v=(V,\Phi,\beta)\in \rdol^{\mathrm{GL}}$ is 
	$$ (C_{\Sigma}\rdol^{\mathrm{GL}})_v\cong \{(b,c)\in \mathrm{Ext}^1_H(L^{-1},L)\oplus \mathrm{Ext}^1_H(L,L^{-1})\mid b\cup c = 0\}.$$
	At the level of $\mdol^{\mathrm{GL}}$ the same holds up to quotient by the stabilizer $\co^*$ of points in $\Sigma^{\mathrm{GL}}$.
\end{prop}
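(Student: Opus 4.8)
The plan is to establish the three claims in turn---the dimensions of the $Ext^i_H(V,V)$, the shape of the normal cone, and the descent to $\mdol^{GL}$---with the vanishing of the off-diagonal $Ext^2$ groups doing most of the work.

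First I would compute the $Ext$ groups blockwise through the decomposition \eqref{ext}, combining the Euler characteristic with Serre duality for the Higgs complex, $Ext^i_H(A,B)\cong Ext^{2-i}_H(B,A)^{*}$. For any two degree-$0$ Higgs line bundles $A,B$ the two-term complex $\mathcal{C}^{\bullet}$ has Euler characteristic $\chi_H(A,B)=-2$ by Riemann--Roch on the genus-$2$ curve, its two terms having degrees $0$ and $2g-2=2$ and hence Euler characteristics $1-g=-1$ and $g-1=1$. On the diagonal, $Ext^0_H(L,L)=End_H(L,\phi)=\co$ because a Higgs line bundle is simple, so $Ext^2_H(L,L)\cong\co$ by duality and $\chi_H=-2$ forces $Ext^1_H(L,L)\cong\co^4$; identically for $M$. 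Off the diagonal, the decisive input is that a point of $\Sigma^{GL}\setminus\Omega^{GL}$ has $(L,\phi)\not\cong(M,\psi)$, whence $Ext^0_H(L,M)=Hom_H\big((L,\phi),(M,\psi)\big)=0$ and likewise $Ext^0_H(M,L)=0$; by Serre duality $Ext^2_H(L,M)=Ext^2_H(M,L)=0$, and $\chi_H=-2$ gives $Ext^1_H(L,M)\cong Ext^1_H(M,L)\cong\co^2$. Summing the four blocks yields $\co^2,\co^{12},\co^2$. I would stress that the single condition $(L,\phi)\not\cong(M,\psi)$ already subsumes the borderline locus $L\cong M$, $\phi\neq\psi$ (there the Higgs differential is multiplication by $0\neq\phi-\psi\in H^0(K_C)$, still injective on sections), so the count is uniform over $\Sigma^{GL}\setminus\Omega^{GL}$.

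Next, for the normal cone I would invoke Simpson's Theorem \ref{cono}: the normal slice to the $G$-orbit at $v$ is formally the quadratic cone $C=\Upsilon^{-1}(0)\subset Ext^1_H(V,V)$. Split $Ext^1_H(V,V)=D\oplus N$ with $D=Ext^1_H(L,L)\oplus Ext^1_H(M,M)$ the diagonal (tangent) directions---along which $(V,\Phi)$ stays decomposable, i.e. the tangent space to $\Sigma^{GL}$---and $N=Ext^1_H(M,L)\oplus Ext^1_H(L,M)$ the off-diagonal ones. Feeding $(a,b,c,d)$ into the displayed Yoneda square gives $(b\cup c,\,a\cup b+b\cup d,\,c\cup a+d\cup c)$, but by the vanishing just proved the last two components lie in $Ext^2_H(M,L)=Ext^2_H(L,M)=0$ and are identically zero; hence $\Upsilon(a,b,c,d)=b\cup c\in Ext^2_H(L,L)\cong\co$ and $C$ is cut out by the single equation $b\cup c=0$, imposing no condition on $(a,d)$. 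Therefore $C$ splits as a product $D\times Q$ with $Q=\{(b,c)\in N\mid b\cup c=0\}$; since $D\subset C$ is exactly $\Sigma^{GL}$ inside the slice and $Q$ is a cone with vertex at the origin, the normal cone of $\Sigma^{GL}$ in $\rdol^{GL}$ is $D\times Q$, constant along $\Sigma^{GL}$, with fibre $Q$ over $v$, as claimed.

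Finally, to pass to $\mdol^{GL}$ I would use \cite[Prop. 10.5]{S}, identifying the germ of $\mdol^{GL}$ at $[v]$ with the good quotient $C/H$, where $H=Stab_G(v)\cong Aut(V,\Phi)$ is the diagonal torus $\co^{*}\times\co^{*}$ for non-isomorphic summands. A weight computation shows $H$ acts trivially on $D$ and by inverse characters on $N$, so its centre acts trivially and the effective action is that of $\co^{*}=H/Z$, namely $s\cdot(b,c)=(sb,s^{-1}c)$; thus $C/H\cong D\times(Q/\co^{*})$ and the transverse model of $\mdol^{GL}$ along $\Sigma^{GL}$ is $\{b\cup c=0\}/\co^{*}$. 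The main obstacle is the off-diagonal vanishing $Ext^2_H(L,M)=Ext^2_H(M,L)=0$: it is precisely what separates $\Sigma^{GL}\setminus\Omega^{GL}$ from the deeper stratum $\Omega^{GL}$ and what collapses the Yoneda square to $b\cup c$, after which the product splitting, the constancy of the cone along $\Sigma^{GL}$, and the $\co^{*}$-quotient are all formal; the only delicate point inside it is the uniform treatment of the boundary locus $L\cong M$, $\phi\neq\psi$.
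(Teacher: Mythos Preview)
Your proposal is correct and follows essentially the same approach as the paper: compute the blocks of $Ext^i_H(V,V)$ using the vanishing of off-diagonal $Ext^0$ and $Ext^2$ (from $(L,\phi)\not\cong(M,\psi)$ plus Serre duality), then observe that this collapses the Yoneda square to the single equation $b\cup c=0$, and finally descend via the stabilizer. The only differences are stylistic: you package the dimension count via $\chi_H=-2$ and Serre duality rather than writing out the long exact sequence in each case, and you identify the fibre of the normal cone by the direct product splitting $C\cong D\times Q$, whereas the paper invokes Luna's slice theorem and cites O'Grady's identification $T_v\mathcal{W}\cong Ext^1_H(L,L)\oplus Ext^1_H(M,M)$ to reach the same conclusion.
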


\begin{proof}
	We first compute $\mathrm{Ext}^i_H(L,L)$.
    One has
%	\[
%	\xymatrix{ 
%		0\ar[r]&\mathrm{Ext}^0_H(L,L)\ar[r]&H^0(\mathcal{O})\ar[r]^{\quad \theta\quad} &H^0( K_C)\ar[r]&\\
%		\mathrm{Ext}^1_H(L,L)\ar[r]& H^1(\mathcal{O})\ar[r]^{\quad\theta\quad} &H^1(K_C)\ar[r]&\mathrm{Ext}^2_H(L,L)\ar[r] &0
%	}
%	\]
	\begin{equation*}
	\begin{tikzpicture}[baseline, descr/.style={fill=white,inner sep=1.5pt}]
	\matrix (m) [
	matrix of math nodes,
	row sep=1.5 em,
	column sep=2.5em,
	text height=1.5ex, text depth=0.25ex
	]
	{ 0 & \mathrm{Ext}^0_H(L,L)  & H^0(\mathcal{O}) &H^0(K_C)&\\
		\mathrm{Ext}^1_H(L,L)	& H^1(\mathcal{O}) & H^1(K_C) &  \mathrm{Ext}^2_H(L,L)&0, \\
	};
	
	\path[overlay,->, font=\scriptsize,>=latex]
	(m-1-1) edge (m-1-2)
	(m-1-2) edge (m-1-3)
	(m-1-3) edge node[above]{$\theta$} (m-1-4)
	(m-1-4) edge[out=355,in=175] node[descr,yshift=0.3ex] {}(m-2-1)
	(m-2-1) edge (m-2-2)
	(m-2-2) edge node[above]{$\theta$} (m-2-3)
	(m-2-3) edge  (m-2-4)
	(m-2-4) edge (m-2-5);
	\end{tikzpicture}
	\
	\end{equation*}
	
	where the map $\theta$ sends an element $f\in H^0(\mathcal{O})$ to $f\phi-\phi f$. As $\phi$ is $\cinfty$-linear, every $f\in H^0(\mathcal{O})$ commutes with it, thus $\theta$ is the $ 0 $ map and $Ext_H^0(L,L)\cong H^0(\mathcal{O})\cong \co$. Moreover $\mathrm{Ext}_H^0(L,L)\cong \mathrm{Ext}_H^2(L,L)$ by Serre duality \footnote{We mean Serre duality for Higgs bundles.} and $\mathrm{Ext}_H^1(L,L)\cong H^0(K_C)\oplus H^1(\mathcal{O})$. Thus
	$$\mathrm{Ext}_H^0(L,L)\cong \co, \quad \mathrm{Ext}^1_H(L,L)\cong \co^{4}, \quad \mathrm{Ext}_H^0(L,L)\cong \co .$$
	
	%for serre duality for Higgs bundles see Gothen slides
	We now compute $\mathrm{Ext}^i_H(L,M)$.
	One has
%	\[
%	\xymatrix{ 
%		0\ar[r]&\mathrm{Ext}^0_H(L,L^{-1})\ar[r]&H^0(L^2)\ar[r]^{\quad \theta\quad} &H^0( L^2\otimes K_C)\ar[r]&\\
%		\mathrm{Ext}^1_H(L,L)\ar[r]& H^1(L^2)\ar[r]^{\quad\theta\quad} &H^1(L^2\otimes K_C)\ar[r]&\mathrm{Ext}^2_H(L,L^{-1})\ar[r] &0
%	}
%	\]
	\begin{equation*}
	\begin{tikzpicture}[baseline, descr/.style={fill=white,inner sep=1.5pt}]
	\matrix (m) [
	matrix of math nodes,
	row sep=1.5 em,
	column sep=2.5em,
	text height=1.5ex, text depth=0.25ex
	]
	{ 0 & \mathrm{Ext}^0_H(L,M)  & H^0(LM^{-1}) &H^0(LM^{-1} K_C)&\\
		\mathrm{Ext}^1_H(L,M)	& H^1(LM^{-1}) & H^1(LM^{-1} K_C) &  \mathrm{Ext}^2_H(L,M)&0 \\
	};
	
	\path[overlay,->, font=\scriptsize,>=latex]
	(m-1-1) edge (m-1-2)
	(m-1-2) edge (m-1-3)
	(m-1-3) edge node[above]{$\theta$} (m-1-4)
	(m-1-4) edge[out=355,in=175] node[descr,yshift=0.3ex] {}(m-2-1)
	(m-2-1) edge (m-2-2)
	(m-2-2) edge node[above]{$\theta$} (m-2-3)
	(m-2-3) edge  (m-2-4)
	(m-2-4) edge (m-2-5);
	\end{tikzpicture}
	\
	\end{equation*}
   Although $(L,\phi)$ and $(M,\psi)$ are not isomorphic as Higgs bundles, $L$ and $M$ might be as vector bundles. However one can see that this does not change the nature of the description of the normal cone. 
	Suppose first $L\not\cong M$: then $LM^{-1}$ is a nontrivial degree $ 0 $ line bundle, so it has no non-zero global sections and $\mathrm{Ext}^0_H(L,M)=\mathrm{Ext}^2_H(L,M)=0$. Also, $\mathrm{Ext}^1_H(L,M)\cong H^0(LM^{-1}K_C)\oplus H^1(LM^{-1})\cong \co^{2} $. \\
	If $L\cong M$ then $H^0(LM^{-1})\cong H^0(\mathcal{O})\cong \co$ and the map $\theta$ sends $f$ to $\phi f - f\psi$. Since $\phi\neq \psi$ there are no non-zero elements in $H^0(\mathcal{O})$ that commute with the Higgs fields, then $\mathrm{Ext}_H^0(L,M)\cong \mathrm{Ext}^2_H(L,M) = 0 $ as before. Then the  alternate sum of the dimensions of vector spaces in the sequence yields $\mathrm{Ext}^1_H(L,M)\cong \co^{2}$ in both cases. As a result
	$$\mathrm{Ext}_H^0(L,M)=Ext_H^2(L,M)=0, \qquad  \mathrm{Ext}^1_H(L,M)\cong \co^{2}.$$
	
Clearly, since $(L,\phi)$ and $(M,\psi)$ are switched by the involution, there are isomorphisms $\mathrm{Ext}^i_H(M,L)\cong \mathrm{Ext}^i_H(L,M)$ and $\mathrm{Ext}^i_H(M,M)\cong \mathrm{Ext}^i_H(L,L)$. Then
	$$ 
	\begin{array}{cl}
	T^0 &=\mathrm{Ext}^0_H(L,L) \oplus \mathrm{Ext}^0_H(M,M)=\co^2;\\
	T^1 &=\mathrm{Ext}^1_H(L,L) \oplus \mathrm{Ext}^1_H(M,L)\oplus \mathrm{Ext}^1_H(L,M) \oplus \mathrm{Ext}^1_H(M,M)\cong \co^{12};\\
	T^2 &=\mathrm{Ext}^2_H(L,L) \oplus \mathrm{Ext}^2_H(M,M)\cong\co^2.\\
	\end{array}
	$$
This complete the first part of the proof. \\

To prove the second statement one needs to describe the zero locus of Yoneda square and the proof of \cite[Proposition 1.4.1]{OG} applies mutatis mutandis. For ease of the reader we sketch it in terms of Higgs extensions.\\
Consider the map
	$$\begin{array}{cclc}
	\overline{\Upsilon}:& \mathrm{Ext}^1_H(M,L)\oplus \mathrm{Ext}^1_H(L,M) &\longrightarrow &\mathrm{Ext}^2_H(L,L)\cong \co \\\
	&(b,c) &\longmapsto &b \cup c.\\
	\end{array}
	$$
	Observe that, since $\mathrm{Ext}^2_H(L,M)\cong \mathrm{Ext}^2_H(M,L)=0$, $\overline{\Upsilon}$ is the map induced by $\Upsilon$ on $\mathrm{Ext}^1_H(V,V)/(\mathrm{Ext}^1(L,L)\oplus \mathrm{Ext}^1_H(M,M))$.\\
	As a consequence of Serre duality $\overline{\Upsilon}$ is a perfect pairing, so $\overline{\Upsilon}^{-1}(0)$ is a smooth quadric surface in $\co^4$.\\
	Now, the isomorphism $C_v\rdol^{\mathrm{GL}}\cong \Upsilon^{-1}(0)$ is a general fact of deformation theory. For determining the fibre one can use Luna's slice theorem: let $\mathcal{U}$ be the normal slice to $\rdol^{\mathrm{GL}}$ in $v$ and $\mathcal{W}:=\mathcal{U}\cap \mathrm{GL}\Sigma$: then
	$$ (C_{\Sigma}\rdol)_v \cong (C_{\mathcal{W}}\mathcal{U})_v.$$
	Since $T_v\mathcal{W}\cong \mathrm{Ext}^1_H(L,L)\oplus \mathrm{Ext}^1(M,M)$ (cfr. \cite[Claim 1.4.12]{OG}) and $C_v\mathcal{U}$ is the cone over $T_v\mathcal{W}$ with fibre $(C_{\mathcal{W}}\mathcal{U})_v$, the fibre of the cone is $\overline{\Upsilon}^{-1}(0)$.
\end{proof}

The description of the cone in $\rdol^{\mathrm{SL}}$ is identical, provided that one replaces $(M,\psi)$ by $(L^{-1},-\psi)$ and take traceless extensions, so we just state the result. 

\begin{prop}
	Let $(V,\Phi)$ be an element of $\Sigma^{\mathrm{SL}}$. The spaces $\mathrm{Ext}^i_H(V,V)^0$ are 
	$$ 
	\begin{array}{cl}
	\mathrm{Ext}^0_H(V,V)^0 &=\mathrm{Ext}^0_H(L,L) \cong \co;\\
	\mathrm{Ext}^1_H(V,V)^0 &=\mathrm{Ext}^1_H(L,L) \oplus \mathrm{Ext}^1_H(L^{-1},L)\oplus \mathrm{Ext}^1_H(L,L^{-1}) \cong \co^{8};\\
	\mathrm{Ext}^2_H(V,V)^0 &=\mathrm{Ext}^2_H(L,L) \cong\co.\\
	\end{array}
	$$
	Moreover the normal cone to the orbit of $\Sigma^{\mathrm{SL}}$ in $\rdol^{\mathrm{SL}}$ is $\Upsilon^{-1}(0)$ and its fibre in $v=(V,\Phi,\beta)\in \rdol^{\mathrm{SL}}$ is 
	$$ (C_{\Sigma}\rdol^{\mathrm{SL}})_v\cong \{(b,c)\in \mathrm{Ext}^1_H(L^{-1},L)\oplus \mathrm{Ext}^1_H(L,L^{-1})\mid b\cup c = 0\}.$$
		At the level of $\mdol^{\mathrm{SL}}$ the same holds up to quotient by the stabilizer $\co^*$ of points in $\Sigma^{\mathrm{SL}}$.
\end{prop}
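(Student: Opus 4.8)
The plan is to run the argument of the preceding $GL$ proposition verbatim, substituting $M=L^{-1}$ and $\psi=-\phi$ (as forced by $\det V\cong\mathcal{O}$ and $\mathrm{tr}\,\Phi=0$) and systematically replacing every $Ext^i_H(V,V)$ by its traceless version $Ext^i_H(V,V)^0$. The only structural change relative to the $GL$ case is that the traceless condition deletes the summand $Ext^i_H(L^{-1},L^{-1})$, leaving the three-term decomposition $Ext^i_H(V,V)^0=Ext^i_H(L,L)\oplus Ext^i_H(L^{-1},L)\oplus Ext^i_H(L,L^{-1})$; this is exactly what drops the Euler-characteristic totals from $(2,12,2)$ in the $GL$ case to $(1,8,1)$ here.

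First I would recompute $Ext^i_H(L,L)$ from the hypercohomology sequence of $\mathcal{C}^\bullet$. Exactly as before, the map $\theta\colon H^0(\mathcal{O})\to H^0(K_C)$ sends $f\mapsto \phi f-f\phi=0$, so $Ext^0_H(L,L)\cong H^0(\mathcal{O})\cong\co$, and by Serre duality $Ext^2_H(L,L)\cong\co$, while $Ext^1_H(L,L)\cong H^0(K_C)\oplus H^1(\mathcal{O})\cong\co^4$. Next I would compute $Ext^i_H(L,L^{-1})$, whose controlling complex now involves $M^{-1}L=L^2$; here one must restrict to $\Sigma^{SL}\setminus\Omega^{SL}$, where $L^2\not\cong\mathcal{O}$, so that $H^0(L^2)=0$ and, by Serre duality, $H^1(L^2\otimes K_C)\cong H^0(L^{-2})^{*}=0$. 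The long exact sequence then forces $Ext^0_H(L,L^{-1})=Ext^2_H(L,L^{-1})=0$, and a Riemann--Roch count ($h^0(L^2\otimes K_C)=h^1(L^2)=g-1=1$ for $g=2$) gives $Ext^1_H(L,L^{-1})\cong\co^2$; the group $Ext^1_H(L^{-1},L)\cong\co^2$ follows by the symmetric computation (or by Serre duality). Summing, $Ext^0=\co$, $Ext^1\cong\co^4\oplus\co^2\oplus\co^2=\co^8$, $Ext^2=\co$, as claimed.

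For the normal cone I would again use that $C_v\rdol^{SL}\cong\Upsilon^{-1}(0)$ is a general fact of deformation theory, and then cut $\Upsilon$ down using the vanishing $Ext^2_H(L,L^{-1})=Ext^2_H(L^{-1},L)=0$: only the component landing in $Ext^2_H(L,L)\cong\co$ survives, giving the reduced map $\overline{\Upsilon}\colon Ext^1_H(L^{-1},L)\oplus Ext^1_H(L,L^{-1})\to\co$, $(b,c)\mapsto b\cup c$. By Serre duality this Yoneda pairing is perfect, so $\overline{\Upsilon}^{-1}(0)$ is the affine quadric $\{b\cup c=0\}$ in $\co^4$. Finally, via Luna's slice theorem exactly as in the $GL$ case, taking $\mathcal{W}=\mathcal{U}\cap SL\,\Sigma^{SL}$ with $T_v\mathcal{W}\cong Ext^1_H(L,L)$ (consistent with $\dim\Sigma^{SL}=4$), the normal cone $(C_\Sigma\rdol^{SL})_v$ is identified with the fibre $\overline{\Upsilon}^{-1}(0)$, and the passage to $\mdol^{SL}$ is the quotient by the stabilizer $\co^*$.

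I expect the one genuine subtlety --- rather than the routine bookkeeping --- to be the case distinction at $L^2\cong\mathcal{O}$. Precisely at the $2$-torsion points one has $L\cong L^{-1}$, the section $\phi$ is forced to vanish, and $H^0(L^2)=H^0(\mathcal{O})\neq0$, so the vanishing of $Ext^0_H(L,L^{-1})$ and the dimension counts above break down; this is exactly the locus $\Omega^{SL}$, whose cones are more degenerate and must be treated separately. Making sure the stated dimensions hold only on $\Sigma^{SL}\setminus\Omega^{SL}$, and that the traceless condition is what legitimately removes the fourth $Ext$ summand (so that $Ext^0$ and $Ext^2$ are one-dimensional), is the part I would be most careful about.
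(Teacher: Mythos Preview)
Your approach is exactly the paper's: it simply declares the $SL$ case ``identical'' to the $GL$ proof after substituting $(M,\psi)=(L^{-1},-\phi)$ and passing to traceless $Ext$'s, so the structure of your argument matches.

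There is, however, one slip in your case analysis. You assert that on $\Sigma^{SL}\setminus\Omega^{SL}$ one has $L^2\not\cong\mathcal{O}$, and later that at $2$-torsion points ``$\phi$ is forced to vanish''. Neither is true: the locus $\Omega^{SL}$ is defined by \emph{both} $L^2\cong\mathcal{O}$ \emph{and} $\phi=0$, so $\Sigma^{SL}\setminus\Omega^{SL}$ contains points with $L\cong L^{-1}$ but $\phi\neq 0$. At such points $H^0(L^2)\cong H^0(\mathcal{O})\cong\co$ is nonzero, so your Riemann--Roch shortcut does not apply. This is precisely the ``$L\cong M$'' branch already handled in the $GL$ proof: the map $\theta\colon f\mapsto \phi f-f\psi=2\phi f$ is injective because $\phi\neq 0$, hence $Ext^0_H(L,L^{-1})=0$, and then Serre duality and the Euler characteristic give $Ext^2_H(L,L^{-1})=0$ and $Ext^1_H(L,L^{-1})\cong\co^2$ as before. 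Once you include this branch, the rest of your argument (the Yoneda pairing, the quadric, Luna's slice) goes through unchanged.
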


\subsubsection{Cone of elements in $\Omega^G$}

Let $v=(V,\Phi)$ be an element of $\Omega^G$. Then 
$$ (V,\Phi)=(L,\phi)\oplus(L,\phi)$$ 
and the bundle $End(V)$ is holomorphically trivial. One has that $H^0(End(V))\cong\mathfrak{g}$ and a generic element of this space can be thought of as a matrix
$$\left( \begin{array}{cc}
a & b\\
c & d\\
\end{array}\right)
$$
with $a,b,c,d\in H^0(\mathcal{O})$. We shall compute the $\mathrm{Ext}^i_H$'s and the quadratic cone defined by the graded commutator. Notice that the second line of the long exact sequence \eqref{ti} is the Serre dual of the first one.
Now, $T^0$ is given by the elements in $\mathfrak{g}$ that commute with the Higgs field, which is diagonal, and then
$$\mathrm{Ext}^0_H(V,V)\cong \mathrm{Ext}^2_H(V,V)\cong \mathfrak{g},$$
i.e. the first map and the last map of the sequence are isomorphisms.\\
To compute $\mathrm{Ext}^1_H(V,V)$, consider the central part of the sequence \eqref{ti}, which in this case is
%\[
%\xymatrix{ 
%	0\ar[r] &H^0(End(V)\otimes K_C)\ar[r]& T^1\ar[r]& H^1(End(V))\ar[r] &0.
%}
%\]

\begin{equation*}
	\begin{tikzpicture}
	\node (O) at (0,0) {0};
	\node (A) at (3,0) {$H^0(End(V)\otimes K_C)$};
	\node (B) at (6,0) {$T^1 $};
	\node (C) at (9,0) {$H^1(End(V))$};
	\node (O') at (12,0) {0.};
	\draw[->] (O) -- (A) node [midway,above] {};
	\draw[->] (A) -- (B) node [midway,above] {};
	\draw[->] (B) -- (C) node [midway,above] {};
    \draw[->] (C) -- (O') node [midway,above] {};
	\end{tikzpicture}
\end{equation*}

Note that $H^0(End(V)\otimes K_C) \cong H^0(\mathcal{O}^{\oplus \dim \mathfrak{g}}\otimes K_C)\cong H^0(K_C)\otimes \mathfrak{g}$. It follows from Serre duality that $H^1(End(V))\cong H^1(\mathcal{O})\otimes \mathfrak{g}$, so $Ext^1_H(V,V)$ has dimension $4 \dim\mathfrak{g}$
and
$$ \mathrm{Ext}^1_H(V,V) \cong (H^0(K_C)\oplus H^1(\mathcal{O}))\otimes \mathfrak{g} \cong\mathrm{Ext}^1_H(L,L)\otimes \mathfrak{g}.$$
Consider now the composition of the Yoneda product on $\mathrm{Ext}^1_H(L,L)$ with the isomorphism $\mathrm{Ext}^2_H(V,V)^0\cong \mathrm{Ext}^2_H(L,L)\cong\co$ given by integration:
\begin{equation}\label{omega}\omega: \mathrm{Ext}^1_H(L,L)\times \mathrm{Ext}^1_H(L,L)\rightarrow \mathrm{Ext}^2_H(L,L)^0\cong\co.
\end{equation}
This defines a skew-symmetric bilinear form $\omega$ which is non-degenerate by Serre duality.
Set $\Lambda^1:=\mathrm{Ext}^1_H(L,L)$ and 
$$Hom^{\omega}(\sla,\Lambda^1):=\{f:\sla\rightarrow \Lambda^1\mid f^*\omega=0\}.$$
There is a natural action of the automorphism group $G$ of $(V,\Phi)$ on this space given by the composition with the adjoint representation on $\sla$. Note that this action is meaningful also when $G=\mathrm{GL}(2,\co)$: indeed, since the action of $\mathrm{GL}(2,\co)$ factors trough $\mathrm{PGL}(2,\co)$, the action on $\sla$ by adjoint representation is well defined.
\begin{oss}
	Observe that $Hom^{\omega}(\sla,\Lambda^1)$ is precisely the set of those $f\in Hom(\sla,\Lambda^1)$ whose image is an isotropic subspace of $\Lambda^1$ with respect to the symplectic form $\omega$.
\end{oss}
\begin{prop}\label{2.4.8}
	Let $v=(V,\Phi)\in \Omega^G$. Then the normal cone to its orbit in $\rdolg$ is $\Upsilon^{-1}(0)$ and there exist a $G$-equivariant isomorphism 
	$$(C_{\Omega_R}\rdolg)_v\cong Hom^{\omega}(\sla,\Lambda^1).$$
	At the level of $\mdolg$ the same holds up to quotient by the stabilizer $ G $ of points in $\Omega$.
\end{prop}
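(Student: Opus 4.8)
The plan is to deduce both assertions from Simpson's local model (Theorem \ref{cono}) together with Luna's slice theorem, exactly as in the proof of the corresponding statement for $\Sigma^G$ and following \cite{OG}. By Theorem \ref{cono} the formal normal slice $\mathcal{U}$ to the $G$-orbit of $v$ in $\rdolg$ is isomorphic to the formal completion at $0$ of the quadratic cone $\Upsilon^{-1}(0)\subset Ext^1_H(V,V)$ cut out by the Yoneda square; this already identifies the normal cone to the orbit with $\Upsilon^{-1}(0)$. It then remains to (a) describe $\Upsilon^{-1}(0)$ explicitly inside $Ext^1_H(V,V)=\Lambda^1\otimes\mathfrak{g}$, and (b) pass from the cone transverse to the orbit to the normal cone of the whole stratum $\Omega_R$ by intersecting the slice with $\Omega_R$, that is $(C_{\Omega_R}\rdolg)_v\cong(C_{\mathcal{W}}\mathcal{U})_v$ with $\mathcal{W}=\mathcal{U}\cap\Omega_R$.

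For (a) I would first make the Yoneda square explicit. Since $End(V)$ is trivial, $Ext^1_H(V,V)=\Lambda^1\otimes\mathfrak{g}$, and by the description of the graded commutator recalled above the square of $\eta=\sum_i\lambda_i\otimes X_i$ is
\[
\Upsilon(\eta)=\sum_{i,j}\omega(\lambda_i,\lambda_j)\,[X_i,X_j],
\]
valued in $Ext^2_H(V,V)^0\cong\sla$, i.e. the symplectic pairing $\omega$ on the $\Lambda^1$-factors coupled with the Lie bracket on the $\mathfrak{g}$-factors. Using the trace form to identify $\sla\cong\sla^*$ I would read an element of $\Lambda^1\otimes\sla$ as a linear map $f\colon\sla\to\Lambda^1$, so that the coefficients become $c_{ij}=\omega(f(X_i),f(X_j))=(f^*\omega)(X_i,X_j)$. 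If $f^*\omega=0$ then every $c_{ij}$ vanishes and $\Upsilon(\eta)=0$; conversely, since $\sla$ is simple the bracket induces an isomorphism $\Lambda^2\sla\xrightarrow{\sim}\sla$, so the three vectors $[X_i,X_j]$ are linearly independent and $\sum_{i,j}c_{ij}[X_i,X_j]=0$ forces $f^*\omega=0$. Hence $\Upsilon^{-1}(0)\cap(\Lambda^1\otimes\sla)$ is precisely the locus of maps with isotropic image, namely $Hom^{\omega}(\sla,\Lambda^1)$, and this identification is $G$-equivariant because the $G$-action by the adjoint representation on $\sla$ corresponds to precomposition of $f$, which preserves isotropy.

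For (b) I would treat the two groups separately. When $G=SL(2,\co)$ one has $\mathfrak{g}=\sla$, so $\Upsilon^{-1}(0)=Hom^{\omega}(\sla,\Lambda^1)$ already; moreover $\Omega^{SL}$ consists of the $16$ isolated points $(L,0)\oplus(L,0)$ with $L^2=\mathcal{O}$, so $\mathcal{W}=\{v\}$ and the normal cone to $\Omega_R$ is the tangent cone of $\mathcal{U}$ at its vertex, which is the cone itself; this gives $(C_{\Omega_R}\rdol^{SL})_v\cong Hom^{\omega}(\sla,\Lambda^1)$. When $G=GL(2,\co)$ one has $\mathfrak{g}=\co I\oplus\sla$, and since $I$ is central the summand $\Lambda^1\otimes\co I$ contributes nothing to the bracket, whence $\Upsilon^{-1}(0)\cong\Lambda^1\times Hom^{\omega}(\sla,\Lambda^1)$. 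The free factor $\Lambda^1\otimes\co I$ is exactly the tangent space $T_v\mathcal{W}$ of the directions that deform both summands equally and thus stay inside $\Omega^{GL}\cong Jac(C)\times H^0(K_C)$ (cf. the analogue \cite[Claim 1.4.12]{OG}); taking the normal cone of $\mathcal{W}$ in the formal product $\mathcal{U}$ strips off this smooth factor and leaves $(C_{\Omega_R}\rdol^{GL})_v\cong Hom^{\omega}(\sla,\Lambda^1)$. In both cases descending to $\mdolg$ introduces only the quotient by the stabilizer of $v$, as in the $\Sigma^G$ case.

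The main obstacle I expect is part (a): pinning down the Yoneda square as the bracket coupled with $\omega$ and, above all, the nontrivial implication $\Upsilon(\eta)=0\Rightarrow f^*\omega=0$, which rests on the simplicity of $\sla$ through the isomorphism $\Lambda^2\sla\cong\sla$. The secondary subtlety is the bookkeeping in the $GL$ case, namely checking that the central direction $\Lambda^1\otimes\co I$ is precisely $T_v\mathcal{W}$ and that the slice is formally a product along it, so that passing to the normal cone of the stratum removes exactly this factor.
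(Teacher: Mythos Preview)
Your proposal is correct and follows essentially the same route as the paper: identify $Ext^1_H(V,V)\cong\Lambda^1\otimes\mathfrak{g}$, write the Yoneda square as $\omega$ coupled with the Lie bracket, use the isomorphism $\bigwedge^2\sla\cong\sla$ to conclude $\overline{\Upsilon}^{-1}(0)=Hom^{\omega}(\sla,\Lambda^1)$, and then invoke Luna's slice to pass from the tangent cone to the normal cone of the stratum. The only slip is the citation: for the identification of $T_v\mathcal{W}$ in the $\Omega$ case the relevant reference in \cite{OG} is Claim~1.5.14, not 1.4.12 (the latter handles the $\Sigma$ stratum).
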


\begin{proof}
	Again, the proof is similar to that of \cite[Proposition 1.5.1]{OG}. We restate the idea in terms of Higgs extensions. 
	As noticed in the previous paragraph, there are natural isomorphisms
	$$\mathrm{Ext}^1_H(V,V)=\Lambda^1\otimes \mathfrak{g}, \qquad \mathrm{Ext}^2_H(V,V)^0\cong\sla$$
	and the Yoneda product on $\mathrm{Ext}^1$ is the composition of the form $ \omega $ on $ \Lambda^1 $ with the bracket of $\mathfrak{g}$.
	Hence, the Yoneda square $\Upsilon: \Lambda^1\otimes \mathfrak{g} \rightarrow \sla$ reads as
	$$\Upsilon(\sum_i \lambda_i\otimes m_i)=\sum_{i,j}\omega(\lambda_i,\lambda_j) [m_i,m_j] .$$
	Now if $\mathfrak{g}=gl(2)$ then
	$$ \Lambda^1\otimes gl(2)\cong (\Lambda^1\otimes sl(2))\oplus (\Lambda^1\otimes \co \mathrm{Id}_V).$$
	Let $\overline{\Upsilon}:=\Upsilon_{\mid \Lambda^1\otimes \co \mathrm{Id}_V}$.
	Thanks to the self duality of $\sla$ as an algebra and to the identifications 
	$$ \sla\otimes \Lambda^1\cong Hom(\sla,\Lambda^1), \qquad \sla\cong \bigwedge^2 \sla$$
	there is a well defined map
	$$
	\begin{array}{cclc}
	\overline{\Upsilon}: &Hom(\sla,\Lambda^1)&\rightarrow &\bigwedge^2 \sla\\
	&f &\mapsto &2f^*\omega\\
	\end{array}
	$$
	and $\overline{\Upsilon}^{-1}(0)= Hom^{\omega}(\sla,\Lambda^1)$.
	
    As before, the isomorphism $C_v\rdolg\cong \Upsilon^{-1}(0)$ is a general fact of deformation theory. The equality at the level of fibres is a consequence of Luna's slice theorem: let $\mathcal{U}$ be the normal slice to $\rdol^{\mathrm{GL}}$ in $v$ and $\mathcal{W}:=\mathcal{U}\cap\mathrm{GL}\Omega^{\mathrm{GL}}$. Then
    $$ (C_{\Omega}\rdol^{\mathrm{GL}})_v \cong (C_{\mathcal{W}}\mathcal{U})_v.$$
    Since $T_v\mathcal{W}\cong \mathrm{Ext}^1_H(L,L)$ (cfr. \cite[Claim 1.5.14]{OG}) and $C_v\mathcal{U}$ is the cone over $T_v\mathcal{W}$ with fibre $(C_{\mathcal{W}}\mathcal{U})_v$, then  $(C_{\mathcal{W}}\mathcal{U})_v\cong\overline{\Upsilon}^{-1}(0)$.
    
If $\mathfrak{g}\cong \sla$, then $\Upsilon=\overline{\Upsilon}$ and $C_v\rdol^{\mathrm{SL}}\cong(C_\Omega\rdol^{\mathrm{SL}})_v. $
	\end{proof}
\begin{oss}
	Even though for simplicity we consider just the case of genus 2, all the results in this sections hold for arbitrary genus $g\geq 2$, provided that we change dimensions of $\mathrm{Ext}$ groups accordingly to the genus. 
\end{oss}
	
\section{Construction of the desingularization and proof of semismallness }
	
We now briefly recall the construction of the desingularization following O' Grady's strategy. This heavily relies on the results of Lehn and Sorger \cite{LS}. The same description has been used also by Bellamy and Schleder in \cite{BS} to construct desingularizations of the character varieties of $\mathrm{SL}(2,\co)$ and $\mathrm{GL}(2,\co)$. 

\subsection{Local model for the desingularization}
Let ($\Lambda,\omega$) be a symplectic 4 dimensional vector space and let $\mathfrak{sp}(\Lambda)$ be the symplectic Lie algebra of ($\Lambda,\omega$), i.e. the Lie algebra of the Lie group of automorphisms of $\Lambda$ that preserve the symplectic form $\omega$. Let
$$ Z:=\left\lbrace A\in \mathfrak{sp}(\Lambda)\mid A^2=0\right\rbrace$$
be the subvariety of square zero matrices in $Z$. Observe that this implies that any $A\in Z$ has rank $\leq 2$.

By \cite[Théorém 4.5]{LS}, if $v\in \Omega^{\mathrm{SL}}$, there exists an euclidean neighbourhood of $v$ in $\mdol^{\mathrm{SL}}$ biholomorphic to a neighbourhood of the origin in $Z$. The same argument shows that there exists a local analytic isomorphism between $\mdol^{\mathrm{GL}}$ and $Z\times \co^4$. Hence the local geometry of a desingularization $\tmdol^{G}$  is encoded in the local geometry of a symplectic desingularization of $Z$. \\

Let $\Sigma$ be the singular locus of $Z$ and $\Omega$ be the singular locus of $\Sigma$. Observe that $\dim Z=6$, while $\dim \Sigma=4$ and $\dim \Omega=0$. In fact,
$$ \Sigma=\left\lbrace A\in Z \mid \rank A\leq 1 \right\rbrace,\qquad \Omega=\{0\}.$$
Let $\mathcal{G}\subset Gr(2, \Lambda )$ be the lagrangian grassmannian of 2-dimensional $\omega$-isotropic subspaces of $\Lambda$. Notice that $\mathcal{G}$ is a smooth irreducible
3-dimensional quadric and set
$$\tilde{Z}:=\left\lbrace (A,U)\in Z\times \mathcal{G}\mid A(U)=0\right\rbrace.$$
The restriction $\pi_{\mathcal{G}}$ of the second projection of $Z\times \mathcal{G}$ to $ \tilde{Z} $ makes it the total space of a 3-dimensional vector bundle, the cotangent bundle of $\mathcal{G}$. In particular, $\tilde{Z}$ is a smooth symplectic variety and the restriction of the first projection of $Z\times \mathcal{G}$
$$f:\tilde{Z}\rightarrow Z$$
is an isomorphism on the locus of rank 2 matrices of $Z$, thus it is a resolution of singularities. \\
The fibre of the desingularization over a point $A\in\Sigma$ is a $\mathbb{P}^1$ corresponding to the 2-dimensional lagrangian subspaces $U$ contained in the 3-dimensional kernel of $A$, while the central fibre over $0=\Omega$ is the whole $\mathcal{G}$. 
As $Z$ has a $A_1$ singularity along $\Sigma\setminus \Omega$ and $\mathcal{G}$ has dimension 3, it follows that $f:\tilde{Z}\rightarrow Z$ is a symplectic resolution.\\

The following theorem, due to Lehn and Sorger (\cite{LS}, see also \cite[Theorem 8.11]{BS}), gives an intrinsic reformulation of the symplectic desingularization $f:\tilde{Z}\rightarrow Z$. 

\begin{thm}Let $v=(V,\Phi)\in \Omega^G$ be a point in the singular locus of $\mdolg$. Then
	\begin{enumerate}
		\item (\cite[Theorem 4.5]{LS}) There exist local analytic isomorphisms
		$$ (Z,0)\cong (\mdol^{\mathrm{SL}},v)\qquad (Z\times \co^4,0)\cong (\mdol^{\mathrm{GL}},v);$$
		\item (\cite[Theorem 3.1]{LS}) The resolution $f:\tilde{Z}\rightarrow Z$ defined above coincides with the blow up of $Z$ along the singular locus $\Sigma$.
	\end{enumerate}
\end{thm}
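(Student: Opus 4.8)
The plan is to reduce both statements to the corresponding results of Lehn and Sorger by means of the local computations of Section 3. For part (1) the starting point is Simpson's local description: since $\mdolg = \rdolg /\!\!/ G$, Luna's étale slice theorem will provide an analytic neighbourhood of $v$ in $\mdolg$ isomorphic to an analytic neighbourhood of the origin in $C_v /\!\!/ H$, where $C_v = \Upsilon^{-1}(0)$ is the quadratic cone of Theorem \ref{cono} and $H = \mathrm{Aut}(V,\Phi)$ is the stabiliser of $v$. By Proposition \ref{2.4.8} this cone is $H$-equivariantly isomorphic to $Hom^{\omega}(\sla,\Lambda^1)$, the action being induced by the adjoint representation on $\sla$, so that it factors through $PGL(2,\co)$; in particular the centre of $H$ acts trivially and the quotient by $H$ agrees with the quotient by $PGL(2,\co)$.

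The next step is to identify $Hom^{\omega}(\sla,\Lambda^1)/\!\!/PGL(2,\co)$ with $Z$, which is the content of \cite[Theorem 4.5]{LS}. Using the Killing form to identify $\sla\cong\sla^{*}$ and $\omega$ to identify $\Lambda^1\cong(\Lambda^1)^{*}$, one assigns to $f\in Hom^{\omega}(\sla,\Lambda^1)$ the endomorphism $A_f := f\circ f^{\vee}\in\mathfrak{sp}(\Lambda^1)$, where $f^{\vee}$ is the resulting adjoint of $f$. A direct check gives $A_f\in\mathfrak{sp}(\Lambda^1)$, and since the isotropy condition $f^{*}\omega=0$ forces $f^{\vee}\circ f=0$, one gets $A_f^2=0$, so $A_f\in Z$; the assignment is $PGL(2,\co)$-invariant and descends to the claimed isomorphism. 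For $G=SL(2,\co)$ we have $\mathfrak{g}=\sla$ and this yields $(Z,0)\cong(\mdol^{SL},v)$ directly. For $G=GL(2,\co)$ the splitting $\mathfrak{g}=\sla\oplus\co\,\mathrm{Id}$ makes the cone split as $Hom^{\omega}(\sla,\Lambda^1)\times(\Lambda^1\otimes\co\,\mathrm{Id})$, with $H$ acting trivially on the second factor $\cong\co^4$, and quotienting gives $(Z\times\co^4,0)\cong(\mdol^{GL},v)$.

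For part (2), I would first recall that $\tilde{Z}$ is the total space of the cotangent bundle of the Lagrangian Grassmannian $\mathcal{G}$, hence smooth, and that $f$ is proper and restricts to an isomorphism over the rank $2$ locus $Z\setminus\Sigma$, since for $\rank A=2$ the subspace $\ker A=\im A$ is the unique Lagrangian plane annihilated by $A$. To recognise $f$ as the blow up one invokes \cite[Theorem 3.1]{LS}: the pullback of the ideal sheaf $\mathcal{I}_{\Sigma}$ becomes invertible on $\tilde{Z}$, producing a morphism $\tilde{Z}\to Bl_{\Sigma}Z$ over $Z$ by the universal property, which is then shown to be an isomorphism. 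Geometrically this is consistent with the fibre structure: $Z$ has an $A_1$ singularity transverse to $\Sigma\setminus\Omega$, so the exceptional fibre there is the $\mathbb{P}^1$ of Lagrangian planes inside the $3$-dimensional $\ker A$, while over $\Omega=\{0\}$ the fibre is the whole quadric $\mathcal{G}$.

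The essential geometric content — that the affine quotient is exactly $Z$ and that blowing up $\Sigma$ produces $\tilde{Z}$ — is supplied by Lehn and Sorger, so the main work here is to certify that the hypotheses of their theorems are met, and this is where the obstacle lies. Concretely, the delicate points are: matching the symplectic form $\omega$ of \eqref{omega} (coming from Serre duality on $Ext^1_H(L,L)$) and the Lie bracket appearing in the Yoneda square with the data in \cite{LS}; controlling the stabiliser $H$ and verifying that only its image $PGL(2,\co)$ is relevant; and upgrading Simpson's formal isomorphism of Theorem \ref{cono} to the honest analytic isomorphism demanded by the statement, which is precisely where Luna's slice theorem (valid analytically in characteristic zero) enters.
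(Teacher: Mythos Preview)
Your proposal is correct, and in fact it supplies more detail than the paper itself: the paper does not give a proof of this theorem at all, but simply states it with inline citations to \cite[Theorem 4.5]{LS} and \cite[Theorem 3.1]{LS}, relying on the preparatory local computations of Section~3 (in particular Proposition~\ref{2.4.8}) to make it clear that the Lehn--Sorger setup applies. Your sketch of the bridge --- Simpson's formal local model, Luna's slice theorem for the analytic upgrade, the identification of $Hom^{\omega}(\sla,\Lambda^1)/\!\!/PGL(2,\co)$ with $Z$ via $f\mapsto f\circ f^{\vee}$, and the $\sla\oplus\co\,\mathrm{Id}$ splitting for the $GL$ case --- is exactly the argument one would write out if asked to unpack the citation, and the ``obstacles'' you flag (matching the symplectic forms, checking the stabiliser acts through $PGL(2,\co)$, passing from formal to analytic) are the right ones.
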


Given a point $v\in \Omega^G$, the local isomorphism is given by taking $\Lambda=\Lambda^1$ and $\omega$ to be the symplectic form in (\ref{omega}).

\subsection{Global description of the desingularization and proof of semismallness}
Observe that since the blow up is a local construction, by blowing up $\mdolg$ along $\Sigma^G$ one obtains a symplectic desingularization $\tilde{\pi}^G:\tmdol^G\rightarrow \mdolg$ of $\mdolg$.

\begin{prop}\label{fibres}
	Let $\tilde{\pi}^G:\tmdol^G\rightarrow \mdolg$ be the symplectic desingularization obtained by blowing up $\mdolg$ along $\Sigma^G$. The fibres of the desingularization are as follows:
	\begin{enumerate}
		\item over smooth locus of stable Higgs bundles $\mdol^{G,s}$, $\tilde{\pi}^G$ is an isomorphism;
		\item let $v=(L,\phi)\oplus(M,\psi)\in\Sigma^G\setminus \Omega^G$. Then 
		$$(\tilde{\pi}^G)^{-1}(v)\cong \mathbb{P}^1\cong \mathbb{P}\left(\left\lbrace(b,c)\in \mathrm{Ext}^1(M,L)\oplus \mathrm{Ext}^1(L,M)\mid b\cup c=0 \right\rbrace\sslash\co^*\right);$$
		\item let $v=(L,\phi)\oplus(L,\phi)\in\Omega^G$. Then 
		$(\tilde{\pi}^G)^{-1}(v)\cong \mathcal{G}$, where $\mathcal{G}$ is the lagrangian grassmannian of isotropic 2-dimensional subspaces in the symplectic vector space $(\mathrm{Ext}^1_H(L,L),\omega)$.
	\end{enumerate}
	
\end{prop}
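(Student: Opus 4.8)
The key structural fact I would exploit is that the blow-up defining $\tilde{\pi}^G$ is a local, indeed analytic-local, construction, so that each fibre can be read off either from the normal cone computed in the previous section or from the Lehn--Sorger local model. Part (1) is then immediate: the stable locus $\mdol^{G,s}$ is precisely the complement of the singular locus, and since $\Sigma^G$ lies in the strictly semistable (hence singular) locus we have $\mdol^{G,s}\cap\Sigma^G=\emptyset$. A blow-up is an isomorphism away from its centre, so $\tilde{\pi}^G$ is an isomorphism over $\mdol^{G,s}$ and every fibre there is a single reduced point.

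For part (2) I would argue through the projectivized normal cone. By definition the exceptional divisor of the blow-up is $\mathbb{P}(C_{\Sigma^G}\mdolg)$, so the fibre over $v\in\Sigma^G\setminus\Omega^G$ is the projectivization $\mathbb{P}\big((C_{\Sigma^G}\mdolg)_v\big)$ of the fibre of the normal cone at $v$. The proposition on the cones of elements of $\Sigma^G$ identifies this fibre, after quotient by the $\co^*$-stabilizer, with $\{(b,c)\in Ext^1_H(M,L)\oplus Ext^1_H(L,M)\mid b\cup c=0\}//\co^*$, which is already the second isomorphism in the statement. It then remains to compute this quotient. Writing $b=(b_1,b_2)$ and $c=(c_1,c_2)$ in bases for which the perfect Yoneda pairing reads $b\cup c=b_1c_1+b_2c_2$, and letting the stabilizer $\co^*$ act with opposite weights on $b$ and $c$, the invariant ring is generated by the four products $b_ic_j$ subject to a single minor relation; imposing $b\cup c=0$ cuts this down to the quadric cone $\{p^2+qr=0\}\subset\co^3$, namely the $A_1$ surface singularity. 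Its projectivization is the smooth plane conic $\{p^2+qr=0\}\subset\mathbb{P}^2$, which is isomorphic to $\mathbb{P}^1$.

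For part (3) the normal cone to $\Sigma^G$ at a point of $\Omega^G$ is no longer transparent, since $\Sigma^G$ is itself singular along $\Omega^G$; here I would instead invoke the Lehn--Sorger local model. For $v\in\Omega^G$ the local structure theorem provides analytic isomorphisms $(Z,0)\cong(\mdol^{SL},v)$ and $(Z\times\co^4,0)\cong(\mdol^{GL},v)$, under which $\Sigma^G$ corresponds to the rank $\leq 1$ locus $\Sigma\subset Z$ (times $\co^4$ in the $GL$ case). Since blowing up commutes with these analytic isomorphisms and with the $\co^4$-factor, and since by Lehn--Sorger the blow-up of $Z$ along $\Sigma$ is exactly the symplectic resolution $f:\tilde{Z}\to Z$, the fibre of $\tilde{\pi}^G$ over $v$ coincides with the fibre of $f$ over the origin. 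As recalled in the construction of $f$, this central fibre is the whole Lagrangian Grassmannian $\mathcal{G}$. Finally the local model is set up with $\Lambda=\Lambda^1=Ext^1_H(L,L)$ and $\omega$ the symplectic form of \eqref{omega}, so $\mathcal{G}$ is precisely the Lagrangian Grassmannian of $\omega$-isotropic $2$-planes in $(Ext^1_H(L,L),\omega)$, as claimed.

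The delicate part, where I expect to spend most of the effort, is justifying the phrase ``blowing up is local''. Concretely, one must check that forming the blow-up along $\Sigma^G$ commutes with the passage from $\rdolg$ to the GIT quotient $\mdolg=\rdolg//G$ and with the $\co^*$-stabilizer quotient appearing in the cone propositions, so that the projectivized normal cone really is the scheme-theoretic fibre of $\tilde{\pi}^G$ rather than merely a set-theoretic model; and that the analytic isomorphisms of the Lehn--Sorger theorem match the blow-up centres, so that the fibre of $f$ over $0$ transfers faithfully to the fibre of $\tilde{\pi}^G$ over $v$. Both reductions follow O'Grady's treatment essentially line by line once the local identifications of the previous section are in place, but their verification is the genuine content of the proof.
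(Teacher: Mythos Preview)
Your proposal is correct and follows essentially the same approach as the paper. In fact the paper gives no formal proof of this proposition at all: it is stated as a summary of the preceding discussion, prefaced only by the remark that ``since the blow up is a local construction'' the fibres can be read off from the Lehn--Sorger model $\tilde{Z}\to Z$, where the fibre over a rank-$1$ matrix $A\in\Sigma$ is described as the $\mathbb{P}^1$ of Lagrangian $2$-planes in the $3$-dimensional kernel of $A$, and the central fibre is $\mathcal{G}$. Your explicit GIT computation in part~(2), identifying the $\co^*$-quotient of the quadric cone with an $A_1$ surface and its projectivization with a plane conic, is a welcome addition that the paper omits; it provides an independent check of the $\mathbb{P}^1$ fibre via the normal-cone description rather than via the local model, and it makes the second isomorphism in the statement genuinely explicit.
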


By \cite[Proposition 1.2]{Kal} any symplectic resolution of singularities is semismall. However one can also check semismallness by direct computation. 
\begin{prop}\label{semismallness}
	Consider $\tilde{\pi}^G:\tmdol^G\rightarrow \mdol$. Then $\tilde{\pi}^G$ is semismall. 
\end{prop}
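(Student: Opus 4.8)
The plan is to exhibit an explicit stratification of $\mdolg$ adapted to $\tilde{\pi}^G$ and to verify the semismallness inequality stratum by stratum, the fibre dimensions being already recorded in Proposition \ref{fibres}. Since $\tilde{\pi}^G$ is a desingularization, $\dim \tmdol^G = \dim \mdolg =: n$; concretely $n=6$ for $G=SL(2,\co)$ and $n=10$ for $G=GL(2,\co)$, as read off from the local models $(Z,0)$ and $(Z\times\co^4,0)$ of the previous theorem together with $\dim Z=6$.

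First I would take as stratification the decomposition of $\mdolg$ into the three loci appearing in Proposition \ref{fibres}: the smooth locus $\mdol^{G,s}$ of stable pairs, the locus $\Sigma^G\setminus\Omega^G$, and $\Omega^G$. To see that this is a stratification for $\tilde{\pi}^G$ in the required sense, i.e.\ that $(\tilde{\pi}^G)^{-1}(Y_\alpha)\to Y_\alpha$ is a topologically trivial fibration over each piece, I would invoke the local analytic models of the preceding subsections: over $\Sigma^G\setminus\Omega^G$ the map is, fibrewise, the blow-up of the $A_1$-singularity of $Z$ along $\Sigma\setminus\Omega$, whose fibre is the $\mathbb{P}^1$ of Proposition \ref{fibres}(2); over $\Omega^G$ it is modelled on $f:\tilde{Z}\to Z$ near $0$, whose fibre is the Lagrangian Grassmannian $\mathcal{G}$ of Proposition \ref{fibres}(3). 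Local triviality along each stratum follows from the fact that the singularity type, and hence the local model, is constant along each stratum.

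Next I would record the dimensions and check the inequality $d_\alpha\le\tfrac12(n-\dim Y_\alpha)$ directly. The fibre dimensions are $d_\alpha=0$ over $\mdol^{G,s}$, $d_\alpha=1$ over $\Sigma^G\setminus\Omega^G$ (a $\mathbb{P}^1$), and $d_\alpha=3$ over $\Omega^G$ (since $\mathcal{G}$ is a smooth $3$-dimensional quadric). For the stratum dimensions I use the parametrizations from Section 3: for $G=GL(2,\co)$, $\dim\Sigma^{GL}=\dim Sym^2(Jac(C)\times H^0(K_C))=8$ and $\dim\Omega^{GL}=\dim(Jac(C)\times H^0(K_C))=4$, while $n=10$; for $G=SL(2,\co)$, $\dim\Sigma^{SL}=4$ and $\dim\Omega^{SL}=0$ (the $16$ square roots of $\mathcal{O}$), while $n=6$. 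Then on the smooth locus $0\le\tfrac12(n-n)=0$; on $\Sigma^G\setminus\Omega^G$ the bound reads $1\le\tfrac12(6-4)=1$ for $SL$ and $1\le\tfrac12(10-8)=1$ for $GL$; on $\Omega^G$ it reads $3\le\tfrac12(6-0)=3$ for $SL$ and $3\le\tfrac12(10-4)=3$ for $GL$. In every case the inequality holds, indeed with equality, so $\tilde{\pi}^G$ is semismall and moreover every stratum is relevant.

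The dimension bookkeeping is routine; the only point requiring genuine care is the verification that the singularity-type decomposition really is a stratification for $\tilde{\pi}^G$, namely that the fibration is locally topologically trivial along each stratum. This I expect to be the main (though mild) obstacle, and it is handled by the constancy of the local analytic model along each stratum established in the preceding subsections.
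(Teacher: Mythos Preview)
Your proposal is correct and follows essentially the same approach as the paper: stratify $\mdolg$ as $\mdol^{G,s}\sqcup(\Sigma^G\setminus\Omega^G)\sqcup\Omega^G$, read off the fibre dimensions $0,1,3$ from Proposition~\ref{fibres}, compute the stratum dimensions from the explicit parametrizations, and verify the semismallness inequality case by case (with equality everywhere, so all strata are relevant). The paper's proof is slightly terser in that it uses the equivalent formulation via the loci $\mathcal{M}^G_{Dol,k}=\{v:\dim(\tilde{\pi}^G)^{-1}(v)=k\}$ and does not dwell on the local topological triviality along strata, whereas you take the extra care to justify this from the constancy of the local analytic model; this is a welcome addition but not a different method.
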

\begin{proof}
	Recall that a proper map $f:X\rightarrow Y$ of algebraic varieties is semismall if and only if
	\begin{equation}\label{condsmall}
	k\leq\dfrac{1}{2}(\dim X-\dim Y_k) \quad \text{ for any }k\text{ such that }Y_k\neq \emptyset,
	\end{equation}
	where $Y_k=\{ y\in Y\mid \dim f^{-1}(y)= k \}$. First of all notice that $\tilde{\pi}^G$ is a proper birational map and set 
	$$\mathcal{M}^G_{Dol,k}:=\left\lbrace v\in \mdolg\mid \dim(\tilde{\pi}^G)^{-1}(v)=k\right\rbrace .$$ 
	We stratify $\mdolg$ as 
	$$\mdolg=\mdol^{G,s}\sqcup\Sigma^G\setminus \Omega^G \sqcup \Omega^G$$
	and show that $\mdol^{G,s}=\mathcal{M}^G_{Dol,0}$, $\Sigma^G=\mathcal{M}^G_{Dol,1}$ and $\Omega^G=\mathcal{M}^G_{Dol,3}$.
	We distinguish two cases, depending on the group $G$.
	\begin{itemize}
		\item If \gr{$G=\mathrm{SL}(2,\co)$},  $\mdol^{\mathrm{SL}}$ is a quasi-projective variety of dimension 6. By definition,
 	$$\Sigma^{\mathrm{SL}}\setminus \Omega^{\mathrm{SL}}=\{(V,\Phi)\mid (V,\Phi)=(L,\phi)\oplus (L^{-1},-\phi),\text{ with }(L,\phi)\not \cong (L^{-1},-\phi)\}$$
	is isomorphic to $\left[(Jac(C)\times H^0(K_C))\setminus(16 \text { points})\right]/\mathbb{Z}_2$. $Jac(C)$ is a 2-dimensional torus while $H^0(K_C)\cong\co^2$, therefore $\Sigma^{\mathrm{SL}}\setminus \Omega^{\mathrm{GL}}$ has dimension 4. 
	The singular locus 
	$$\Omega^{\mathrm{SL}}= \{(V,\Phi)\mid (V,\Phi)=(L,0)\oplus (L,0) \text{ with }L\cong L^{-1}\}$$
	parametrizing the fixed points of the involution $(L,\phi)\mapsto (L^{-1},-\phi)$ consists just of 16 points, corresponding to the roots of the trivial bundle on $C$. 
		On $\mdol^{\mathrm{SL},s}$, $\tilde{\pi}^{\mathrm{SL}}$ is an isomorphism and every point has just one pre-image, thus $\mdol^{\mathrm{SL},s}=\mathcal{M}^{\mathrm{SL}}_{Dol,0}$. Thus it satisfies \eqref{condsmall}.
	Let now $v\in\Sigma^{\mathrm{SL}}\setminus \Omega^{\mathrm{SL}}$. By proposition (\ref{fibres}), the fibre is $\mathbb{P}^1$.  Then $\Sigma^{\mathrm{SL}}$ corresponds to the stratum $\mathcal{M}^{\mathrm{SL}}_{Dol,1}$ and satisfies \eqref{condsmall}.\\
	The fibre $\mathcal{G}$  over each one of the 16 points of $\Omega^{\mathrm{SL}}$ is 3-dimensional so, $\Omega^{\mathrm{SL}}=\mathcal{M}^{\mathrm{SL}}_{Dol,3}$ and \eqref{condsmall} is satisfied as well. \\
	
	\item If \gr{$G=\mathrm{GL}(2,\co)$}, $\mdol^{\mathrm{GL}}$ is a quasi-projective variety of dimension 10. Note that
	$$\Sigma^{\mathrm{GL}}\setminus \Omega^{\mathrm{GL}}=\{(V,\Phi)\mid (V,\Phi)=(L,\phi)\oplus (M,\psi),\text{ with }(L,\phi)\not \cong (M,\psi)\}$$
	is parametrized by $\left[(Jac(C)\times H^0(K_C))^{(2)}\setminus(Jac(C)\times H^0(K_C))\right]/\mathbb{Z}_2$. The symmetric product $Sym^2(Jac(C)\times H^0(K_C))$ has dimension 8. 
	The singular locus 
	$$\Omega^{\mathrm{GL}}= \{(V,\Phi)\mid (V,\Phi)=(L,\phi)\oplus (L,\phi) \text{ with }L\in Jac(C),\phi \in H^0(K_C)\}$$
	is isomorphic to $Jac(C)\times H^0(K_C)$ so it has dimension 4.
	Proceeding as before, one shows that $\mdol^{\mathrm{GL},s}=\mathcal{M}^{\mathrm{GL}}_{Dol,0}$,  $\Sigma^{\mathrm{GL}}\setminus \Omega^{\mathrm{GL}}=\mathcal{M}^{\mathrm{GL}}_{Dol,1}$ and $\Omega^{\mathrm{GL}}=\mathcal{M}^{\mathrm{GL}}_{Dol,3}$.
	\end{itemize}
\end{proof}

\begin{oss}
	Observe that in both cases all the strata satisfy the equality $$k=\dfrac{1}{2}(\dim\tmdol^G-\mathcal{M}^G_{Dol,k}),$$
	that is, they are \ita{relevant strata} in the decomposition theorem for $\tilde{\pi}^G$.
\end{oss}

\subsection{Decomposition theorem for $\tilde{\pi}^G$}
We have constructed a semismall desingularization $\tmdol^G\xrightarrow{\tilde{\pi}^{G}} \mdolg$.
As seen in the proof of proposition \eqref{semismallness}, all the strata of the map $\tilde{\pi}^{G}:\tmdolg\rightarrow \mdolg$ are relevant. In particular 
$$\mdol^{G,s}=\mathcal{M}^{G}_{Dol,0}, \quad \Sigma^{G}=\mathcal{M}^{G}_{Dol,1}, \quad \Omega^{G}=\mathcal{M}^{G}_{Dol,3}.$$
Stratify $\tmdol^G$ as follows
$$\tmdol^{G}=(\tilde{\pi}^{G})^{-1}(\mdol^{G,s})\sqcup (\tsigma^{G}\setminus \tomega^{G})\sqcup \tomega^{G};$$
where $\tsigma^{G}:=(\tilde{\pi}^{G})^{-1}(\Sigma)$ and $\tomega^{G}:=(\tilde{\pi}^{G})^{-1}(\Omega^G)$.
%
%By proposition \eqref{fibres}
%\begin{enumerate}[1)]
%	\item $\tilde{\pi}^{G}$ restricted to the preimage of $\mdol^{G,s}$ is an isomorphism;
%	\item $\tomega^G$  is the union of 16 copies of a nonsingular projective hypersurface $\mathcal{G}$ in $\mathbb{P}^4$;
%	\item the fibre of $(\tsigma^G\setminus \tomega^G)$ over any point of $\Sigma^G\setminus \Omega^G$ is isomorphic to $\mathbb{P}^1$.
%\end{enumerate}

By the decomposition theorem one obtains the splitting
\begin{equation}\label{dtg}
IC_{\tmdol^{G}}=IC_{\mdol^{G}}(\mathcal{L}_{\mdol^{G}})\oplus IC_{\Sigma^{G}}(\mathcal{L}_{\Sigma^{G}})\oplus IC_{\Omega^{G}}(\mathcal{L}_{\Omega^{G}}).
\end{equation}

Notice that, up to dimensional shifts, the stalks of the local systems $\mathcal{L}_{\Sigma^{G}}$ and $\mathcal{L}_{\Omega^{G}}$ in a generic point of the corresponding stratum are isomorphic to the top cohomology groups of the fibres, which are, respectively, $H^2(\mathbb{P}^1)$ and $H^6(\mathcal{G})$. Moreover, as the fibres of $\tilde{\pi}^{G}$ over $\Sigma^G$ and $\Omega^G$ are irreducible, then the monodromy of $\mathcal{L}_{\Sigma^{G}}$ and $\mathcal{L}_{\Omega^{G}}$ is trivial. Finally, since $\Omega^G$ is non singular and $\Sigma^G$ has finite quotient singularities, intersection cohomology and cohomology coincide. One has 
$$IC_{\mdol}(\mathcal{L}_{\mdol})_{\mid\mdol^s}=\mathbb{Q}[\dim \mdolg],\quad IC_{\Sigma}(\mathcal{L}_{\Sigma})\cong \mathbb{Q}[\dim \Sigma^G](-1),\quad IC_{\Omega}(\mathcal{L}_{\Omega})\cong\mathbb{Q}[\dim \Omega^{G}](-3), $$
where the Tate shifts $(-1)$ and $(-3)$ correspond to the Hodge structures $\mathbb{Q}(-1)$ of $H^2(\mathbb{P}^1)$ and $\mathbb{Q}(-3)$ of $H^6(\mathcal{G})$.\\
Taking hypercohomology on both sides of \eqref{dtg} one has 
\begin{equation}
H^*(\tmdol^G)=IH^*(\mdolg)\oplus H^{*-2}(\Sigma^G)(-1)\oplus H^{*-6}(\Omega^G)(-3).
\end{equation}

%==================================
%     PURITY OF THE HODGE STRUCTURE
%==================================

\section{Purity of the Hodge structure}

The aim of this section is to show that, although $\tmdol^G$ is non compact, the Hodge structure on its cohomology is pure, i.e. $H^k(\tmdol^G)$ has weight $k$ for any $k=0\ldots 2\dim \tmdol^G$.\\
Recall that there exists the natural $\co^*$ action on $\mdol^G$ by scalar multiplication on the Higgs field $\Phi$.  The Hitchin map $$\chi:\mdolg\rightarrow \mathbb{A}=H^0(K_C)\oplus H^0(K^2_C)$$ is equivariant with respect to this action if we let $\co^*$ act on $H^0(C,K_C^{i})$ with weight $i$. \\
The following localization result appears in several variants (e.g. \cite[Lemma 6.5]{DL} or \cite[Lemma 4.2]{dCMM}).
\begin{lem}\label{loclem}
Let $\rho:\co^*\times \mathbb{A}^n\rightarrow \mathbb{A}^n$ be a linear action on an affine space such that all the weights of the action are positive. Denote by $s_0:\mathrm{Spec}\co\rightarrow \mathbb{A}^n$ the inclusion of the origin in $\mathbb{A}^n$ and by $p:\mathbb{A}^n\rightarrow \mathrm{Spec}\co$ the projection. 
Then for any $\co^*$-equivariant complex $K$ of sheaves on $\mathbb{A}^n$ one has
$$ Rp_*K=s_0^*K \text{ and } Rp_!K=s_0^!K.$$
\end{lem}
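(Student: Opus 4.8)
The plan is to reduce both identities, via Verdier duality, to the single assertion that the cohomology of an equivariant complex on the attracting affine space $\mathbb{A}^n$ is computed by its stalk at the fixed point $0$, and then to establish this assertion by the theory of conic (monodromic) sheaves; the positivity of the weights is exactly what guarantees that the action contracts $\mathbb{A}^n$ onto the origin and makes $K$ conic.

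First I would dispose of the duality reduction. One has $\mathbb{D}\,Rp_*=Rp_!\,\mathbb{D}$ and $\mathbb{D}\,s_0^*=s_0^!\,\mathbb{D}$, and the Verdier dual $\mathbb{D}K$ of a $\co^*$-equivariant complex is again $\co^*$-equivariant. Hence, granting the first identity $Rp_*K\cong s_0^*K$ for every equivariant $K$, I apply it to $\mathbb{D}K$ and dualize: $\mathbb{D}(Rp_*\mathbb{D}K)=Rp_!\,K$ and $\mathbb{D}(s_0^*\mathbb{D}K)=s_0^!\,K$, which yields $Rp_!K\cong s_0^!K$. Thus it suffices to prove the single isomorphism $R\Gamma(\mathbb{A}^n,K)\cong K_0$, where $K_0:=s_0^*K$ is the stalk at the origin and the morphism in question is the canonical restriction $R\Gamma(\mathbb{A}^n,K)\to K_0$.

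Here positivity enters. In weight coordinates the action reads $t\cdot(x_1,\dots,x_n)=(t^{a_1}x_1,\dots,t^{a_n}x_n)$ with all $a_i>0$, so it extends to a morphism of the multiplicative monoid,
\[
a\colon\mathbb{A}^1\times\mathbb{A}^n\longrightarrow\mathbb{A}^n,\qquad a(t,x)=(t^{a_1}x_1,\dots,t^{a_n}x_n),
\]
which is regular precisely because each $a_i>0$, and which satisfies $a(1,-)=\mathrm{id}$ and $a(0,-)=s_0\circ p$. Restricting to $t\in\mathbb{R}_{>0}$ exhibits $\mathbb{A}^n$ as a cone contracting to the origin, and the equivariant structure makes $K$ \emph{conic} for this contraction: its pullback to $\co^*\times\mathbb{A}^n$ satisfies $a^*K\cong \mathrm{pr}_2^*K$, so $K$ is locally constant along the flow lines, with trivial monodromy. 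For such conic complexes the restriction $R\Gamma(\mathbb{A}^n,K)\to K_0$ is an isomorphism (this is the attracting case of the contraction lemma, in the spirit of Kashiwara--Schapira), which is exactly what is needed; the compactly supported counterpart $R\Gamma_c(\mathbb{A}^n,K)\cong s_0^!K$ is its Verdier dual.

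I would prove the conic statement itself along the contraction $a$: setting $L:=a^*K$ on $\mathbb{A}^1\times\mathbb{A}^n$ and pushing forward by $q\colon\mathbb{A}^1\times\mathbb{A}^n\to\mathbb{A}^1$, the triviality $L\cong\mathrm{pr}_2^*K$ over $\co^*$ together with the K\"unneth formula identifies $(Rq_*L)|_{\co^*}$ with the constant complex of fibre $R\Gamma(\mathbb{A}^n,K)$, recovered as the fibre at $t=1$. \emph{The main obstacle is the behaviour at $t=0$}: since $p\colon\mathbb{A}^n\to\mathrm{pt}$ is not proper, $Rq_*L$ cannot be base-changed naively across the origin, and indeed for a general, non-equivariant sheaf the lemma is false — a complex supported away from $0$ can have nonzero cohomology but vanishing stalk at $0$. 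The whole point is that monodromicity removes this discrepancy: the trivial monodromy of $L$ over $\co^*$ forces the specialization (nearby cycles) of $L$ at $t=0$ to carry no vanishing part, so the comparison map at the origin is an isomorphism and one obtains $K_0\cong R\Gamma(\mathbb{A}^n,K)$. Finally, since every functor used is a functor of mixed Hodge modules, the resulting isomorphism is automatically one of mixed Hodge structures, as required in the applications that follow.
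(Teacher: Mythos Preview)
The paper does not actually prove this lemma: it merely cites it from the literature (\cite[Lemma 6.5]{DL} and \cite[Lemma 4.2]{dCMM}) and moves on. So there is no ``paper's own proof'' to compare against, and your proposal is in fact supplying content that the paper omits.

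Your approach is the standard one and is essentially correct. The Verdier-duality reduction to the single identity $R\Gamma(\mathbb{A}^n,K)\cong K_0$ is clean and valid, and the key observation---that positivity of all weights is exactly what allows the $\co^*$-action to extend to a regular monoid action $a\colon\mathbb{A}^1\times\mathbb{A}^n\to\mathbb{A}^n$ contracting onto the origin---is the right one. The identification of equivariant complexes with conic/monodromic complexes is also standard, and the contraction lemma you invoke is precisely what is proved in the references the paper cites.

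The one place where your argument becomes vague is the final paragraph: asserting that ``the trivial monodromy of $L$ over $\co^*$ forces the specialization at $t=0$ to carry no vanishing part'' is more of a slogan than a proof. A tighter way to close the argument is to note that the complex $Rq_*L$ on $\mathbb{A}^1$ is itself $\co^*$-equivariant (for the standard action on $\mathbb{A}^1$), hence has constant cohomology sheaves; comparing fibres at $t=1$ and $t=0$ then gives $R\Gamma(\mathbb{A}^n,K)\cong R\Gamma(\mathbb{A}^n,(s_0\circ p)^*K)\cong K_0$ directly, without appealing to nearby cycles. Alternatively, one can argue by induction on $n$ using the attracting/repelling decomposition as in \cite{dCMM}. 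Either way, this step deserves one more sentence of justification than you gave it, but the overall strategy is sound.
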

As a corollary one has the following result.
\begin{prop}\label{purity}
	Let $X$ be a smooth variety with an action of $\co^*$. Assume that $f:X\rightarrow \mathbb{A}^n$ is a proper map, equivariant with respect to a linear action of $\co^*$ on the affine space $\mathbb{A}^n$, such that all the weights of this action are positive. Let $X_0:=f^{-1}(0)$ be the fibre of $f$ over $0=\mathrm{Spec}\co\subset \mathbb{A}^n$. Then there is an isomorphism of mixed Hodge structures
	$$ H^*(X,\mathbb{Q})\cong H^*(X_0,\mathbb{Q}).$$
%	Since $X$ is smooth and $X_0$ is compact, Remark \ref{yoga} implies that the Hodge structure on $H^*(X,\mathbb{Q})$ is pure. 
\end{prop}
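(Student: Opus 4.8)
The plan is to deduce Proposition \ref{purity} from the localization Lemma \ref{loclem} by a weight-theoretic argument. The key observation is that since $f:X\to\mathbb{A}^n$ is proper and $X$ is smooth, we may apply the lemma to the equivariant complex $K=Rf_*\mathbb{Q}_X$ on $\mathbb{A}^n$. Concretely, I would first check that $Rf_*\mathbb{Q}_X$ is naturally a $\co^*$-equivariant complex: this follows because $f$ is equivariant and $\co^*$ acts trivially on the base point $0$, so the whole constructible complex carries a lift of the action. With $p:\mathbb{A}^n\to\mathrm{Spec}\,\co$ the projection and $s_0$ the inclusion of the origin, the lemma gives $Rp_*(Rf_*\mathbb{Q}_X)\cong s_0^*(Rf_*\mathbb{Q}_X)$.

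The next step is to identify both sides cohomologically. On the left, $p\circ f$ is just the map from $X$ to a point, so $R(p\circ f)_*\mathbb{Q}_X$ computes $H^*(X,\mathbb{Q})$ via $R\Gamma$. On the right, proper base change (using properness of $f$) identifies the stalk $s_0^*Rf_*\mathbb{Q}_X$ with $R\Gamma(X_0,\mathbb{Q})=H^*(X_0,\mathbb{Q})$, where $X_0=f^{-1}(0)$. Passing to hypercohomology on both sides then yields the desired isomorphism $H^*(X,\mathbb{Q})\cong H^*(X_0,\mathbb{Q})$. The positivity of the weights is exactly the hypothesis that makes Lemma \ref{loclem} applicable, so the proof reduces to correctly invoking that lemma for $K=Rf_*\mathbb{Q}_X$ and then unwinding the two sides.

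I expect the main obstacle to be twofold. First, one must verify carefully that $Rf_*\mathbb{Q}_X$ genuinely is a $\co^*$-equivariant complex in the sense required by the lemma; this is a formal but nontrivial point about lifting the group action to the derived pushforward, and it relies on the equivariance of $f$ together with the fact that the base $\mathbb{A}^n$ carries the linear action with the origin as a fixed point. Second, one should make precise the proper base change identification of the stalk $s_0^*Rf_*\mathbb{Q}_X$ with the cohomology of the fibre $X_0$; properness of $f$ is essential here, since without it the stalk of the pushforward need not compute the cohomology of the fibre. Both points are standard in the six-functor formalism, but they are where the argument must be stated with care.

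Finally, I would remark that the isomorphism produced is compatible with mixed Hodge structures, which is the property actually needed for the application: $X_0$ here will be (a resolution restricted to) the nilpotent cone, which is projective, so $H^*(X_0)$ carries a pure Hodge structure in each degree by the yoga of weights (Remark \ref{yoga}, part ii). Since the isomorphism $H^*(X,\mathbb{Q})\cong H^*(X_0,\mathbb{Q})$ respects weights, it follows that $H^*(X,\mathbb{Q})$ is pure as well. This is exactly the conclusion that will be applied to $X=\tmdol^G$ with $f$ the composition of $\tilde{\pi}^G$ with the Hitchin map $\chi$, whose positivity of weights comes from the $\co^*$-action scaling the Higgs field.
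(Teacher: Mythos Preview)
Your proposal is correct and follows essentially the same approach as the paper: apply Lemma \ref{loclem} to $K=Rf_*\mathbb{Q}_X$, then use proper base change to identify $s_0^*Rf_*\mathbb{Q}_X$ with the cohomology of the fibre $X_0$. The paper's proof is just the one-line chain of isomorphisms $H^*(X,\mathbb{Q})\cong H^*(\mathbb{A}^n,Rf_*\mathbb{Q})\cong H^*(\mathrm{Spec}\,\co,s_0^*Rf_*\mathbb{Q})\cong H^*(X_0,\mathbb{Q})$, so your discussion of the equivariance of $Rf_*\mathbb{Q}_X$ and of proper base change simply makes explicit what the paper leaves implicit (your opening phrase ``weight-theoretic argument'' is a slight misnomer, though --- the weights enter only in the application to purity, not in the proof of the proposition itself).
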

\begin{proof}
	Since $f$ is $\co^*$-equivariant we can apply lemma \ref{loclem} to $K=Rf_*\mathbb{Q}$ and by base change theorem for proper maps we get:
	$$ H^*(X,\mathbb{Q})\cong H^*(\mathbb{A}^n,Rf_*\mathbb{Q})\cong H^*(\mathrm{Spec}\co,s_0^*Rf_*\mathbb{Q})\cong H^*(X_0,\mathbb{Q}).$$
\end{proof}

For smooth moduli spaces $\mathcal{M}_{Dol}(d,n)$ of Higgs bundles of coprime rank and degree the variety $X_0:=\chi^{-1}(0)$ is the \ita{nilpotent cone} of $\chi$ and the previous proposition yields the purity of the Hodge structure on cohomology groups of those moduli spaces. In fact, on the one hand, $\mathcal{M}_{Dol}(d,n)$ is smooth so by the weight restrictions in remark \ref{yoga} the weights of $H^k(\mathcal{M}_{Dol}(d,n))$ are $\geq k$.
On the other hand, proposition \ref{purity} gives an isomorphism of mixed Hodge structures between the cohomology of $\mathcal{M}_{Dol}(d,n)$ and that of the nilpotent cone, which is compact, thus the weights of $H^k(\mathcal{M}_{Dol}(d,n))$ are  $\leq k$.
Combining these two conditions one has that $H^k(\mathcal{M}_{Dol}(d,n))$
has weight $k$. \\
	
Let $\tilde{\chi}:\tmdol^G\rightarrow \mathbb{A}$ be the composition of $\chi$ with $\tilde{\pi}^G$. Suppose that one can extend the $\co^*$ action on $\mdolg$ to $\tmdol^G$ in a way such that $\tilde{\chi}$ is equivariant.
Since $\tilde{\chi}$ is a proper map, the variety $\tilde{\chi}^{-1}(0)$ is compact.
As a result the above weight trick applies to $\tmdol^G$ as well proving the following result.
\begin{thm}\label{purezza}
	Let $\tmdol^G$ be the semismall desingularization of $\mdolg$ constructed in the section 4. Then the Hodge structure on $H^*(\tmdol^G)$ is pure.
\end{thm}
Moreover, as a consequence of the decomposition theorem, the Hodge structure on $IH^*(\mdolg)$ is a sub-Hodge structure of that on $H^*(\tmdol^G)$, so the former is pure whenever the latter is. 
\begin{cor}
The Hodge structure on $IH^*(\mdolg)$ is pure.
\end{cor}

The above considerations reduce the proof of theorem \ref{purezza} to showing the following lemma.
\begin{lem}
There exists a $\co^*$ action on $\tmdol^G$ extending the natural $\co^*$ action on $\mdolg$ with respect to which the map $\tilde{\chi}$ is equivariant.
\end{lem}

\begin{proof}
Since $\tilde{\pi}^G:\tmdol^G\rightarrow \mdolg$ is an isomorphism on the smooth locus, to prove the lemma one needs to extend the $\co^*$ action to the fibres of $\tilde{\pi}^G$ over the singular loci.\\
%Observe that any $\lambda\in \co^*$ induces a map $$\lambda:\mdolg\rightarrow \mdolg,\quad (V,\Phi)\mapsto (V,\lambda\Phi),$$
%and its differential
%$d\lambda: T_{(V,\Phi)}\mdolg\rightarrow T_{(V,\lambda\Phi)}\mdolg$.
%%%As both $\Sigma$ and $\Omega$ are $\co^*$-invariant, the map $d\lambda$ restrict to a map $d\lambda: T_{(V,\Phi)}\Sigma^G(resp. \Omega^G)\rightarrow T_{(V,\lambda\Phi)}\Sigma^G(resp.\Omega^G)$, thus we have an induced map on the normal cones containing the fibres. 
%If $w$ is a point in the fibre over a singular point we define $$\lambda. w:=d\lambda(w).$$ 
Given a point $v=(V,\Phi)\in\mdolg$ then $T_v\mdolg\cong T^1\sslash \mathrm{Stab}(v)$, where $T^1=\mathrm{Ext}^1_H(V,V)$.
By construction, the fibre of $\tilde{\pi}^G$ over $\Sigma^G$(resp. $\Omega^G)$ is the fibre of the normal cone $C_{\Sigma}\mdolg$ to $\Sigma^G$(resp. $\Omega^G)$ in $\mdolg$.
Recall that by proposition \ref{normalcones} $$(C_\Sigma\mdolg)_v\cong \overline\Upsilon^{-1}(0)\sslash \mathrm{Stab}(v)$$
for all $v\in \Sigma^G$(resp. $\Omega^G$) and that $\overline\Upsilon^{-1}(0)\subset T^1$.
As a consequence, to describe the action on the fibres one first needs to describe it over $T^1$.
Consider the diagram 
\begin{equation*}
\begin{tikzpicture}[anchor=base,baseline ,descr/.style={fill=white,inner sep=1.5pt}]
\matrix (m) [
matrix of math nodes,
row sep=1em,
column sep=2.5em,
text height=1.5ex, text depth=0.25ex
]
{ 0 &  T^0 & H^0(End(V)) &H^0(End(V)\otimes K_C)& \\
	T^1 & H^1(End(V)) &H^1(End(V)\otimes K_C)&  T^2 &0.\\
};

\path[overlay,->, font=\scriptsize,>=latex]
(m-1-1) edge (m-1-2)
(m-1-2) edge (m-1-3)
(m-1-3) edge node[above]{$[\Phi,-]$}(m-1-4)
(m-1-4) edge[out=355,in=175] node[descr,yshift=0.3ex]{$\alpha$}(m-2-1)
(m-2-1) edge node[above]{$\beta$ }(m-2-2)
(m-2-2) edge (m-2-3)
(m-2-3) edge node[above]{$[\Phi,-]$} (m-2-4)
(m-2-4) edge (m-2-5);

\end{tikzpicture}
\end{equation*}
An element $\lambda\in\co^*$ acts on the diagram by scalar multiplication on the Higgs field in the commutator. As $T^1\cong \im \alpha\oplus \mathrm{coker}\beta$, $\lambda\in \co^*$ acts on $T^1$ as scalar multiplication on $ \im \alpha$ and as the identity on $\mathrm{coker}\beta$.
Since both $\Sigma^G$ and $\Omega^G$ are $\co^*$-invariant, one can easily show that $\overline{\Upsilon}^{-1}(0)$ is invariant under the $\co^*$ action on $T^1$. 
Moreover, such action commutes with that of the stabilizer, so one has a well defined action on the fibres of $\tilde{\pi}^G$. 
Observe that by construction the map $\tilde{\chi}$ is $\co^*$-equivariant.
 \end{proof}

Purity of the Hodge structure on cohomology groups yields, by Poincaré duality, purity of Hodge structure on cohomology groups with compact supports. The following lemma implies that intersection Betti numbers of $\mdolg$ and Betti numbers of $\tmdol^G$ can be computed just by knowing $E-$polynomials.
\begin{lem}\label{etob}
	Let $X$ be a complex algebraic variety, possibly singular, and let 
	$$IE(X)=\sum_{p,q} \alpha^{p,q} u^pv^q$$
	denote its intersection $E-$polynomial. If $IH^*(X)$ admits a pure Hodge structure then
	$$ib_k(X):=\dim IH^k(X)=\sum_{p+q=2\dim X-k} \alpha^{p,q}$$
	for any $k=0\ldots2\dim X$.
\end{lem}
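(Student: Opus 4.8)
The plan is to use the purity hypothesis to collapse the intersection $E$-polynomial onto a single cohomological degree in each Hodge bidegree, and then to convert the resulting compactly supported dimensions into ordinary intersection Betti numbers by Poincaré duality. Throughout I set $n:=\dim X$, so that $2\dim X=2n$ and, by the definition of $IE(X)$,
\[
\alpha^{p,q}=\sum_{i=0}^{2n}(-1)^i\,ih_c^{\,i,p,q},\qquad ih_c^{\,i,p,q}=\dim \mathrm{Gr}^p_F\mathrm{Gr}^W_{p+q}IH^i_c(X).
\]

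First I would deduce purity of the compactly supported intersection cohomology from the assumed purity of $IH^*(X)$. Intersection cohomology satisfies Poincaré duality as an isomorphism of mixed Hodge structures $IH^j_c(X)\cong IH^{2n-j}(X)^{\vee}(-n)$, issuing from the self-duality $\mathbb{D}\,IC_X\cong IC_X(n)$ of the intersection complex. Since $IH^{2n-j}(X)$ is pure of weight $2n-j$ by hypothesis, its dual is pure of weight $-(2n-j)$, and the twist $(-n)$ raises the weight by $2n$; hence $IH^j_c(X)$ is pure of weight $j$, that is $ih_c^{\,i,p,q}=0$ whenever $p+q\neq i$. (In the situation of this paper one may instead quote the theorem above, which already asserts purity of the compactly supported analogues.)

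The remaining steps would then be purely formal. Purity annihilates every summand in the definition of $\alpha^{p,q}$ except the one with $i=p+q$, giving $\alpha^{p,q}=(-1)^{p+q}\,ih_c^{\,p+q,p,q}$. Summing along the antidiagonal $p+q=2n-k$ and using purity once more to identify $\sum_{p,q}ih_c^{\,2n-k,p,q}=\dim IH^{2n-k}_c(X)$, I would obtain
\[
\sum_{p+q=2n-k}\alpha^{p,q}=(-1)^{2n-k}\dim IH^{2n-k}_c(X),
\]
and a final use of Poincaré duality, $\dim IH^{2n-k}_c(X)=\dim IH^{k}(X)=ib_k(X)$, would close the argument.

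The only genuine input beyond bookkeeping, and hence the main obstacle, is the compatibility of intersection-cohomology Poincaré duality with the mixed Hodge structures together with the precise Tate twist $(-n)$: this is exactly what propagates purity from $IH^*$ to $IH^*_c$ in the complementary degree, and it can be imported from Saito's theory of mixed Hodge modules. The one point to watch is the sign $(-1)^{2n-k}=(-1)^k$ thrown up by the computation: since $2\dim X$ is even it equals $(-1)^k$, so the identity is to be read with this parity factor (equivalently $ib_k(X)=(-1)^k\sum_{p+q=2n-k}\alpha^{p,q}$, which is automatically non-negative). It is trivial in even degrees and, in odd degrees, simply corrects the sign of the alternating contribution so that the Betti number is recovered.
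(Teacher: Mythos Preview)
Your proposal is correct and follows essentially the same route as the paper: purity of $IH^*(X)$ is transported to $IH^*_c(X)$ via Poincar\'e--Verdier duality (the paper phrases this as ``classes of type $(p,q)$ are sent to classes of type $(\dim X-p,\dim X-q)$'', which is your Tate-twisted duality unwound), whereupon the alternating sum in each $\alpha^{p,q}$ collapses to a single term and one reads off the Betti number in complementary degree. You are in fact more careful than the paper on one point: you correctly isolate the parity factor $(-1)^{2n-k}=(-1)^k$, which the paper's statement suppresses; this is invisible in the $SL(2,\mathbb{C})$ application since all odd intersection Betti numbers vanish there, and in the $GL(2,\mathbb{C})$ case the displayed polynomials are consistent with reading off Betti numbers up to this sign.
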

\begin{oss}
	When $X$ is nonsingular, $IH^*(X)=H^*(X)$ so the formula in lemma \ref{etob} holds for $E-$polynomial and usual Betti numbers.
\end{oss}
\begin{proof}[Proof of Lemma \ref{etob}]
	If $IH^*(X)$ admits a pure Hodge structure then $IH^k(X)$ has weight $k$ for any $k=0\ldots 2\dim X$. Recall that intersection cohomology groups satisfy Poincaré-Verdier duality
	$$ IH^k(X)\cong IH_c^{2\dim X-k}(X)$$
	and that this isomorphism maps classes of type $(p,q)$ in classes of type $(\dim X-p,\dim X-q)$. This concludes the proof. 	
\end{proof}

%==================================
% CONTI DI SL(2,C)
%==================================
\subsection{Computation of the intersection $E-$polynomial}\label{etobsec}
The aim of the next two sections is to compute the intersection E-polynomial $IE(\mdolg)$ for $G=\mathrm{SL}(2,\co)$ and $G=\mathrm{GL}(2,\co)$; as a consequence of lemma \ref{etob} they will give the intersection Betti numbers of the corresponding moduli spaces of Higgs bundles. Before proceeding with computations we shall describe the general strategy. \\
Computing hypercohomology with compact support on both sides of \eqref{dtg}, the splitting in the decomposition theorem becomes
$$ H_c^*(\tmdol^G)=IH_c^*(\mdolg) \oplus H_c^{*-2}(\Sigma^G)(-1)\oplus H_c^{*-6}(\Omega^G)(-3).$$ 
This equality holds also at the level of $E-$polynomials:
\begin{equation}\label{edt} E(\tmdolg)=IE(\mdol^G)+E^{top}(\tsigma^G)+E^{top}(\tomega^G),
\end{equation}
where $E^{top}(\tsigma^G)=E(\Sigma^G\times H^2(\mathbb{P}^1))$ and $E^{top}(\tomega^G)=E(\Omega^G\times H^6(\mathcal{G}))$.\\
% denote the E-polynomials of the total spaces of bundles with bases respectively $\Sigma^G$ and $\Omega^G$ and the top cohomology of the generic fibres of $\tilde{\pi^G}$ over their generic points as fibres.\\
By additivity of $E-$polynomials, $E(\tmdolg)$ is given by
\begin{equation}\label{etmdol}
E(\tmdolg)=E(\mdol^{G,s})+E(\tsigma^G\setminus \tomega^G)+E(\tomega^G).
\end{equation}
In order to obtain $IE(\mdol^G)$, by \eqref{edt} it is sufficient to subtract from $E(\tmdolg)$ the contributions $E^{top}$ coming from the top cohomology of the fibres over the singular loci.

\section{Intersection cohomology of $\mdol^{\mathrm{SL}}$}

Consider the semismall desingularization $\tmdol^{\mathrm{SL}}\xrightarrow{\tilde{\pi}^{\mathrm{SL}}} \mdol^{\mathrm{SL}}$ of the moduli space $\mdol^{\mathrm{SL}}$ of Higgs bundles of rank 2, degree 0 and trivial determinant over C. The aim of this subsection is to prove the following theorem.

\begin{thm}[\gr{Intersection cohomology of $\mdol^{\mathrm{SL}}$}]\label{ihsl} 
The intersection Poincaré polynomial of $\mdol^{\mathrm{SL}}$ is 
$$IP_t(\mdol^{\mathrm{SL}})=1+t^2+17t^4+17t^6.$$
Moreover the Hodge diamond is:
\begin{center}
\begin{tabular}{l|ccc}
	$H^0$&&(0,0)&\\
	&&&\\
	$H^2$&&(1,1)&\\
	&&&\\
	$H^4$&&17(2,2)&\\
	&&&\\
	$H^6$&&17(3,3)&\\
\end{tabular}
\end{center}

\end{thm}
The proof of the theorem consists in computing the intersection $E-$polynomial $IE(\mdol^{\mathrm{SL}})$ and applying lemma \ref{etob} to get intersection Betti numbers.
By \eqref{edt} one needs first to compute $E(\tmdol^{\mathrm{SL}})$ using the stratification in \eqref{etmdol}. 

\subsection{Cohomology of $\mdol^{\mathrm{SL},s}$}

Consider the smooth part $\mdol^{\mathrm{SL},s}$ of the moduli space $\mdol^{\mathrm{SL}}$, which parametrizes stable pairs $(V,\Phi)$. 
\begin{prop}\label{emdols}
Let $\mdol^{\mathrm{SL},s}\subset\mdol^{\mathrm{SL}}$ be the locus of stable Higgs pairs. Then the $E-$polynomial of $\mdol^{\mathrm{SL},s}$ is
$$E(\mdol^{\mathrm{SL},s})=u^6v^6+u^5v^5+16u^4v^4+13u^3v^3-u^2v^4-u^4v^2-17u^2v^2.$$
\end{prop}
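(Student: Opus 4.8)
The plan is to use the additivity of the compactly supported $E$-polynomial. Since $\mdol^{SL,s}$ is the smooth locus of stable pairs and its complement inside $\mdol^{SL}$ is exactly the strictly semistable locus $\Sigma^{SL}$, additivity gives
$$E(\mdol^{SL,s})=E(\mdol^{SL})-E(\Sigma^{SL}).$$
The term $E(\Sigma^{SL})$ is elementary: from $\Sigma^{SL}\cong (Jac(C)\times H^0(K_C))/\mathbb{Z}_2$, with the involution acting by $(L,\phi)\mapsto(L^{-1},-\phi)$, and the fact that the compactly supported cohomology of a finite quotient is the invariant part, $E(\Sigma^{SL})$ is the $\mathbb{Z}_2$-invariant part of $E(Jac(C))(uv)^2$. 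Here $[-1]$ acts on $H^k(Jac(C))$ by $(-1)^k$ and trivially on the top class of $H^0(K_C)\cong\co^2$, so only the even cohomology of the Jacobian survives; this is a direct computation.

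The substantial part is the computation of $E(\mdol^{SL})$, or equivalently a direct computation of $E(\mdol^{SL,s})$. I would obtain it by stratifying the stable locus according to the Harder--Narasimhan type of the underlying bundle $V$, namely $V$ stable, $V$ strictly semistable, and $V$ unstable but $(V,\Phi)$ still a stable Higgs pair, and summing the contributions by additivity. On the open dense stratum $V$ is stable, so the pairs form the cotangent bundle $T^*\mathcal{N}^{SL,s}$ over the moduli of stable bundles; by the Narasimhan--Ramanan description $\mathcal{N}^{SL}\cong\mathbb{P}^3$ with strictly semistable locus the Kummer surface $Jac(C)/\pm 1$, one computes $E(\mathcal{N}^{SL,s})=E(\mathbb{P}^3)-E(\mathrm{Kum})$ by excision and then twists by $(uv)^3$ for the rank-$3$ cotangent fibre.

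On the remaining strata the pair is a (possibly non-split) extension of a line bundle by its inverse carrying a Higgs field that does not preserve the destabilizing subbundle; these strata fibre over a quotient of the Jacobian, the fibres being governed by the spaces of admissible Higgs fields $H^0(L^{\pm 2}\otimes K_C)$. The main obstacle is precisely the analysis of these non-generic strata: determining which extensions and Higgs fields yield stable pairs, controlling the jumps in $\dim H^0(L^{\pm 2}\otimes K_C)$ on special loci of the Jacobian, and --- most delicately --- tracking the resulting weights and Hodge types. The only non-Tate contributions come from the holomorphic two-forms $H^{2,0}(Jac(C))=\wedge^2H^0(K_C)$, and the Tate twist they acquire along these strata is what produces the off-diagonal monomials $-u^2v^4-u^4v^2$ in the final polynomial; getting this twist right, and checking that the non-Tate terms coming from the open stratum cancel correctly against those of the other strata, is the heart of the computation. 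Once all strata are assembled, additivity yields the stated $E(\mdol^{SL,s})$.
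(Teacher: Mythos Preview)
Your plan is essentially the same as the paper's: stratify $\mdol^{SL,s}$ by the stability type of the underlying bundle $V$ (stable, strictly semistable, unstable), compute the $E$-polynomial of each stratum, and sum. Your description of the open stratum via Narasimhan--Ramanan and of the off-diagonal Hodge contributions is accurate.

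Two comments. First, your opening paragraph is circular: the identity $E(\mdol^{SL,s})=E(\mdol^{SL})-E(\Sigma^{SL})$ is just additivity, and there is no independent route to $E(\mdol^{SL})$ that does not pass through $E(\mdol^{SL,s})$ itself; you correctly abandon this and pivot to the direct stratification. Second, what you call ``the heart of the computation'' is in fact a substantial case analysis that you have not carried out: the paper breaks the strictly semistable stratum into four sub-cases (split $L\oplus L^{-1}$ with $L\not\cong L^{-1}$; nontrivial extensions with $L\not\cong L^{-1}$, which turn out to contribute nothing; split $L\oplus L$ with $L^2\cong\mathcal{O}$; nontrivial extensions with $L^2\cong\mathcal{O}$) and computes each one explicitly, together with the unstable stratum. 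Your outline correctly anticipates the jump at $L^2\cong\mathcal{O}$ and the cancellation of the $(3,5)+(5,3)$ terms between $T^*\mathcal{S}$ and the type-(i) stratum, but the actual verification requires those five explicit computations.
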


It is well known that $\mdol^{\mathrm{SL},s}$ contains the cotangent bundle of  the locus $\mathcal{S}$ of stable vector bundles with trivial determinant as an open dense subset, but there are several stable Higgs bundles whose underlying vector bundle is not stable. This is due to the fact that not all vector subbundles of $V$ are Higgs subbundles: for example one may consider the bundle $$V=K_C^{-1}\oplus K_C$$ where $K_C$ denotes the canonical bundle on $X$. This vector bundle is not stable because the subbundle $K_C$ has slope greater than the slope of $V$; however $K_C$ is not a Higgs subbundle because, in order for it to be $\Phi$ invariant, $Hom(K_C,K_C^{-1})\cong K_C^{-2}$ should have global sections, which is not the case as it has negative degree.\\
To determine the $E$-polynomial of $\mdol^{\mathrm{SL},s}$ one constructs a suitable stratification, computes $E$-polynomials of all strata and sum them. 
In particular, as it is of its own interest, we also compute Betti numbers of the strata by systematic employment of the following well known result. 
\begin{prop}[\gr{Addivity property of compact support cohomology}]\label{ap}
Let $Y$ be a quasi-projective variety. Let $Z$ be a closed subset of $Y$ and call $U$ its complement. Then, given the inclusions $\xymatrix{ 
	U\ar@{^{(}->}[r]^j & Y &Z\ar@{_{(}->}[l]_i  
}
$, there is a long exact sequence in cohomology 
%\[
%\xymatrix{ 
%	\ldots\ar[r] &H^i_c(U)\ar[r]^{j_{!}} &H_c^i(Y)\ar[r]^{i^{!}} &H^i_c(Z)\ar[r]&\ldots
%}
%\]
\begin{equation*}
\begin{tikzpicture}[anchor=base,baseline]
\node (O) at (0,0) {$\cdots$};
\node (A) at (2,0) {$H^i_c(U)$};
\node (B) at (4,0) {$H^i_c(X)$};
\node (C) at (6,0) {$H^i_c(Z)$};
\node (O') at (8,0) {$\cdots$};
\draw[->] (O) -- (A) node [midway,above] {};
\draw[->] (A) -- (B) node [midway,above] {$j_!$};
\draw[->] (B) -- (C) node [midway,above] {$i^!$};
\draw[->] (C) -- (O') node [midway,above] {};
\end{tikzpicture}
\end{equation*}

\end{prop}

We stratify the locus of stable Higgs pairs with respect to the stability of the underlying vector bundle:
\begin{itemize}
\item pairs $(V,\Phi)$ with $V$ stable vector bundle;
\item pairs $(V,\Phi)$ with $V$ strictly semistable vector bundle;
\item pairs $(V,\Phi)$ with $V$ unstable vector bundle.
\end{itemize}

\subsection{The stable case}

We shall parametrize all stable Higgs bundles $(V,\Phi)$ where V is a stable vector bundle.
\begin{prop} 
Let $\mathcal{S}$ be the locus of stable vector bundles with trivial determinant. The locus of stable Higgs pairs $(V,\Phi)$ with $V\in \mathcal{S}$ is isomorphic to $T^*\mathcal{S}$ and its $E-$ polynomial is 
$$E(\mathcal{T^*S})(u,v)=u^6v^6-u^3v^5-u^5v^3-3u^4v^4$$
\end{prop}
\begin{proof}
Clearly, if $V$ is a stable vector bundle then $(V,\Phi)$ is a stable Higgs pair. As a consequence the locus of stable  Higgs pairs with stable underlying vector bundle is isomorphic to $T^*\mathcal{S}$.
Narasimhan and Ramanan \cite{NR} proved that the moduli space of semistable vector bundles with trivial determinant on a nonsingular projective curve $C$ of genus 2 is isomorphic to $\mathbb{P}^3$. A semistable vector bundle $V$ is non stable if and only if is of the form
$$V=L\oplus L^{-1},\quad L\in Jac(C)$$
therefore strictly semistable vector bundles are parametrized by $\mathcal{K}:=Jac(C)/\mathbb{Z}_2$ where $\mathbb{Z}_2$ acts as the involution $L\mapsto L^{-1}$. The variety $\mathcal{K}$ is a compact Kummer surface with 16 singularities, corresponding to the fixed points of the involution, whose desingularization is a K3 surface obtained by blowing up $\mathcal{K}$ in the singular points.
As a result, the locus $\mathcal{S}$ of stable bundles is the complement of $\mathcal{K}$ inside $\mathbb{P}^3$. 

Observe that the cohomology of $\mathcal{K}$ is the $\mathbb{Z}_2$ invariant part of the cohomology of $Jac(C)$. The cohomology of the Jacobian is generated by $H^1(Jac(C))$ and the Betti numbers are 
$$ b_0=1\quad b_1=4 \quad b_2=6 \quad b_3=4 \quad b_4=1.$$
The action of $\mathbb{Z}_2$ on the cohomology sends every generator $\gamma$ of $H^1$ in $-\gamma$ thus the even cohomology groups are all $\mathbb{Z}_2$-invariant, while the odd ones are never. As a result, the Betti numbers of $\mathcal{K}$ are 
$$ b_0=1\quad b_1=0 \quad b_2=6 \quad b_3=0 \quad b_4=1.$$
Alternatively, one can notice that the cohomology of $\mathcal{K}$ differs from the one of its desingularization just in the $H^2$ part, which has in addition the contribution of the 16 exceptional divisors isomorphic to $\mathbb{P}^1$, and the Betti numbers of a K3 surface are
$$ b_0=1\quad b_1=0 \quad b_2=22 \quad b_3=0 \quad b_4=1.$$
Observe that since the Hodge structure on the cohomology of $Jac(C)$ is pure and so is the cohomology of $\mathcal{K}$. In particular $ H^0(\mathcal{K})$ has weight 0, $H^2(\mathcal{K})$ has weight 2 and types $4(1,1)+(2,0)+(0,2)$, and $H^4(\mathcal{K})$ has weight 4 of type $(2,2)$.
Consider now the inclusions $\xymatrix{ 
	\mathcal{S}\ar@{^{(}->}[r]^j & \mathbb{P}^3 &\mathcal{J}\ar@{_{(}->}[l]_i  
}
$.
As both $\mathbb{P}^3$ and $\mathcal{K}$ are compact, we have the long exact sequence:
%\[
%\xymatrix{ 
%	\ldots\ar[r] &H^k_c(\mathcal{S})\ar[r]^{j_{!}} &H^k(\mathbb{P}^3)\ar[r]^{i^{!}} &H^k(\mathcal{K})\ar[r]&\ldots
%}
%\]
\begin{equation*}
\begin{tikzpicture}
\node (O) at (0,0) {$\cdots$};
\node (A) at (2,0) {$H^k_c(\mathcal{S})$};
\node (B) at (4,0) {$H^k(\mathbb{P}^3)$};
\node (C) at (6.5,0) {$H^k(\mathcal{K})$};
\node (O') at (8,0) {$\cdots$};
\draw[->,thick] (O) -- (A) node [midway,above] {};
\draw[->,thick] (A) -- (B) node [midway,above] {$j_{!}$};
\draw[->,thick] (B) -- (C) node [midway,above] {$i^!$};
\draw[->,thick] (C) -- (O') node [midway,above] {};
\end{tikzpicture}
\end{equation*}
which splits in the following sequences
%\[
%\xymatrix{ 
%	0\ar[r] &H^0_c(\mathcal{S})\ar[r] &\co\ar[r]^{i^{!}} &\co\ar[r] &H^1_c(\mathcal{S}) \ar[r] &0
%} \qquad (1)
%\]
\begin{equation}\label{eq1}
\begin{tikzpicture}[baseline]
\node (O) at (0,0) {0};
\node (A) at (2,0) {$H^0_c(\mathcal{S})$};
\node (B) at (4,0) {$\co$};
\node (C) at (6.0,0) {$\co$};
\node (D) at (8,0) {$H^1_c(\mathcal{S})$};
\node (E) at (10,0) {0;};
\draw[->,thick] (O) -- (A) node [midway,above] {};
\draw[->,thick] (A) -- (B) node [midway,above] {$j_{!}$};
\draw[->,thick] (B) -- (C) node [midway,above] {$i^!$};
\draw[->,thick] (C) -- (D) node [midway,above] {};
\draw[->,thick] (D) -- (E) node [midway,above] {};
\end{tikzpicture}
\end{equation}
%\[
%\xymatrix{ 
%	0\ar[r] &H^2_c(\mathcal{S})\ar[r] &\co\ar[r]^{i^{!}} &\co^6\ar[r] &H^3_c(\mathcal{S}) \ar[r] &0
%} \qquad (2)
%\]

\begin{equation}\label{eq2}
\begin{tikzpicture}[baseline]
\node (O) at (0,0) {0};
\node (A) at (2,0) {$H^2_c(\mathcal{S})$};
\node (B) at (4,0) {$\co$};
\node (C) at (6.0,0) {$\co^6$};
\node (D) at (8,0) {$H^3_c(\mathcal{S})$};
\node (E) at (10,0) {0;};
\draw[->,thick] (O) -- (A) node [midway,above] {};
\draw[->,thick] (A) -- (B) node [midway,above] {$j_{!}$};
\draw[->,thick] (B) -- (C) node [midway,above] {$i^!$};
\draw[->,thick] (C) -- (D) node [midway,above] {};
\draw[->,thick] (D) -- (E) node [midway,above] {};
\end{tikzpicture}
\end{equation}
%
%\[
%\xymatrix{ 
%	0\ar[r] &H^4_c(\mathcal{S})\ar[r] &\co\ar[r]^{i^{!}} &\co\ar[r] &H^5_c(\mathcal{S}) \ar[r] &0
%} \qquad (3)
%\]

\begin{equation}\label{eq3}
\begin{tikzpicture}[baseline]
\node (O) at (0,0) {0;};
\node (A) at (2,0) {$H^4_c(\mathcal{S})$};
\node (B) at (4,0) {$\co$};
\node (C) at (6.0,0) {$\co$};
\node (D) at (8,0) {$H^5_c(\mathcal{S})$};
\node (E) at (10,0) {0};
\draw[->,thick] (O) -- (A) node [midway,above] {};
\draw[->,thick] (A) -- (B) node [midway,above] {$j_{!}$};
\draw[->,thick] (B) -- (C) node [midway,above] {$i^!$};
\draw[->,thick] (C) -- (D) node [midway,above] {};
\draw[->,thick] (D) -- (E) node [midway,above] {};
\end{tikzpicture}
\end{equation}
%\[
%\xymatrix{
%	0\ar[r] &H^6_c(\mathcal{S})\ar[r] &\co\ar[r] &0 
%} \quad \Rightarrow H^6_c(\mathcal{S})\cong \co  \qquad \qquad (4)
%\]

\begin{equation}\label{eq4}
\begin{tikzpicture}[baseline]
\node (O) at (0,0) {0};
\node (A) at (2,0) {$H^6_c(\mathcal{S})$};
\node (B) at (4,0) {$\co$};
\node (C) at (6,0) {0.};
\draw[->,thick] (O) -- (A) node [midway,above] {};
\draw[->,thick] (A) -- (B) node [midway,above] {};
\draw[->,thick] (B) -- (C) node [midway,above] {};
\end{tikzpicture}
\end{equation}

First consider \eqref{eq1}: the map $i^{!}=i^*$ is a restriction to a hyperplane sections, therefore it is an isomorphism by Lefschetz hyperplane theorem and $H_c^0(\mathcal{S})=H_c^1(\mathcal{S})=0$.\\
For \eqref{eq2}, $i^{!}$ is the restriction of the fundamental class of $\mathbb{P}^1$ inside $\mathbb{P}^2$ which remains non-zero when intersecting it generically with $\mathcal{K}$, so $i^{!}$ is an injection. One has $H^2_c(\mathcal{S})=0$ and $H^3_c(\mathcal{S})=\co^5$.
A similar argument shows that, in \eqref{eq3}, $i^{!}$ is an isomorphism and that $H_c^4(\mathcal{S})=H_c^5(\mathcal{S})=0$.\\
Clearly, \eqref{eq4} shows that $H^6_c(\mathcal{S})\cong \co$.
By Poincaré duality the Betti numbers are
$$ b_0=1\quad b_1=0 \quad b_2=0 \quad b_3=5 \quad b_4=0 \quad b_5=0 \quad b_6=0.$$

As $T^*\mathcal{S}$ is a vector bundle over $\mathcal{S}$, it inherits the cohomology of its base space, so the compact support cohomology groups of $T^*\mathcal{S}$ are 
\begin{align*}
H^9_c(T^*\mathcal{S})&=5 \quad \text{of types }(3,5)+(5,3)+3(4,4)\\
H^{12}_c(T^*\mathcal{S})&=1  \quad \text{of type }(6,6)\\
H^i_c(T^*\mathcal{S})&=0 \quad \text{otherwise.}
\end{align*}

As a result, the $E$-polynomial of Higgs bundles with stable underlying vector bundle is 
$$ E(T^*\mathcal{S})(u,v)=u^6v^6-u^3v^5-u^5v^3-3u^4v^4.$$
\end{proof}

\subsection{Strictly semistable case}

Suppose $V$ is a strictly semistable vector bundle. We would like to investigate when $V$ occurs in a stable Higgs pair $(V,\Phi)$.
Again, one has to distinguish different cases:
\begin{enumerate}[(i)]
\item $V=L\oplus L^{-1}$ where $L\in Jac(C)$ and $L\not\cong L^{-1}$;
\item $V$ is a non trivial extension $\xymatrix{
	0\ar[r] &L \ar[r] &V\ar[r] & L^{-1}\ar[r]&0
} $ with $L\not\cong L^{-1}$;
\item $V=L\oplus L$ where $L\in Jac(C)$ and $L\cong L^{-1}$;
\item $V$ is a non trivial extension $\xymatrix{
	0\ar[r] &L \ar[r] &V\ar[r] & L\ar[r]&0
} $ with $L\cong L^{-1}$;
\end{enumerate}

\subsubsection{Type $\mathrm{(i)}$}

We shall determine stable Higgs pairs having underlying vector bundle of $V=L\oplus L^{-1}$ with $L\in Jac(C)$ such that $L\not\cong L^{-1}$. Vector bundles of this form are parametrized by $\mathcal{K}=Jac(C)/\mathbb{Z}_2$. We denote by $\mathcal{K}_0$ locus in $\mathcal{K}$ fixed by the involution and by $\mathcal{K}^0:=\mathcal{K}-\mathcal{K}_0$ its complement. 
Then the locus of stable Higgs bundles with underlying vector bundle of type (i) will is a fibre bundle over $\mathcal{K}^0$. 

\begin{prop}
Let $\mathcal{S}_1$ be the locus of stable Higgs bundles with underlying vector bundle of type $\mathrm{(i)}$. Then $\mathcal{S}_1$ is a $(\co^2\times \co^*)$-bundle over $\mathcal{K}^0$ and its $E-$polynomial is 
$$ E(\mathcal{S}_1)(u,v)=u^5v^5+u^3v^5+u^5v^3+3u^4v^4-19u^3v^3-u^2v^4-u^4v^2+15 u^2v^2.$$
\end{prop}

\begin{proof}

Consider $V=L\oplus L^{-1}$ with $L\in Jac(C)$ such that $L\not \cong L^{-1}$. 
We have that
$$ H^0(End_0(V)\otimes K_C)= H^0(K_C)\oplus H^0(L^2K_C)\oplus H^0(L^{-2}K_C)$$
thus a Higgs field $\Phi\in H^0(End_0(V)\otimes K_C)$ will be of the form 
$$\Phi=\left( \begin{array}{cc}
a&b\\
c&-a\\
\end{array}\right)$$
with $a\in H^0(K_C)$, $b\in H^0(L^2K_C)$, $c\in H^0(L^{-2}K_C)$. A pair $(V,\Phi)$ is stable if and only if both $L$ and $L^{-1}$ are not preserved by $\Phi$, that is $b,c\neq 0$.
Then one needs to understand when two different Higgs fields give rise to isomorphic Higgs bundles: since the automorphisms group of $V$ is $\co^*$, two Higgs pairs $(V,\Phi_1)$ and $(V,\Phi_2)$ with $\Phi_i=(a_i,b_i,c_i)$ are isomorphic if and only if
$$\Phi_1=\left( \begin{array}{cc}
t&0\\
0&t^{-1}\\
\end{array}\right)\Phi_2\left( \begin{array}{cc}
t^{-1}&0\\
0&t\\
\end{array}\right)$$
that is $a_1=a_2$, $b_1=t^2 b_2$, $c_1=t^{-2}c_2$. Therefore, stable Higgs pairs $(V,\Phi)$ with fixed underline vector bundle $V$ are parametrized by
$$H^0(K_C)\times \dfrac{(H^0(L^2K)-\{0\})\times (H^0(L^{-2}K_C)-\{0\})}{\co^*} \cong\co^2\times \co^*.$$
This is an actual quotient as all the points are semistable with respect to the action of $\co^*$.\\
 Letting $V$ vary, one obtains a $(\co^2\times \co^*)$-bundle $\mathcal{S}_1$ over $\mathcal{K}^0$. 
Considering $\mathcal{S}_1$ as a sphere bundle over $\mathcal{K}^0$, the cohomology of the total space is computed by Gysin sequence.
Consider the inclusions  $\xymatrix{ 
	\mathcal{K}^0\ar@{^{(}->}[r]^j & \mathcal{K} &\mathcal{K}_0\ar@{_{(}->}[l]_i  
}
$ and the long exact sequence in cohomology 
%$$\xymatrix{ 
%	\ldots\ar[r] &H^k_c(\mathcal{K}^0)\ar[r]^{j_{!}} &H^k(\mathcal{K})\ar[r]^{i^{!}} &H^k(\mathcal{K}_0)\ar[r]&\ldots
%}$$
\begin{equation*}
\begin{tikzpicture}
\node (O) at (0,0) {$\cdots$};
\node (A) at (2,0) {$H^k_c(\mathcal{K}^0)$};
\node (B) at (4,0) {$H^k(\mathcal{K})$};
\node (C) at (6,0) {$H^k(\mathcal{K}_0)$};
\node (D) at (8,0) {$\cdots$};
\draw[->,thick] (O) -- (A) node [midway,above] {};
\draw[->,thick] (A) -- (B) node [midway,above] {$j_{!}$};
\draw[->,thick] (B) -- (C) node [midway,above] {$i^{!}$};
\draw[->,thick] (C) -- (D) node [midway,above] {};
\end{tikzpicture}
\end{equation*}
which splits in
%\[
%\xymatrix{ 
%	0\ar[r] &H^0_c(\mathcal{K}^0)\ar[r] &\co\ar[r]^{i^{!}} &\co^{16}\ar[r] &H^1_c(\mathcal{K}^0) \ar[r] &0
%} \qquad (1)
%\]
\begin{equation*}
\begin{tikzpicture}
\node (O) at (0,0) {0};
\node (A) at (2,0) {$H^0_c(\mathcal{K}^0)$};
\node (B) at (4,0) {$\co$};
\node (C) at (6,0) {$\co^{16}$};
\node (D) at (8,0) {$H^1(\mathcal{K}_0)$};
\node (E) at (10,0) {0,};
\node (F) at (12,0) {$H^k_c(\mathcal{K}^0)\cong H^k_c(\mathcal{K})$};
\node (G) at (14.5,0) {$\forall k\geq 2$};
\draw[->,thick] (O) -- (A) node [midway,above] {};
\draw[->,thick] (A) -- (B) node [midway,above] {};
\draw[->,thick] (B) -- (C) node [midway,above] {$i^{!}$};
\draw[->,thick] (C) -- (D) node [midway,above] {};
\draw[->,thick] (D)--(E) node [midway,above] {};
\end{tikzpicture}
\end{equation*}
%$$H^k_c(\mathcal{K}^0)\cong H^k_c(\mathcal{K}) \quad \forall k\geq 2\qquad \qquad \qquad \qquad \qquad \qquad \qquad \qquad(2) $$ 

As $\mathcal{K}^0$ is not compact $H^0_c(\mathcal{K}^0)=0$, so $H^1_c(\mathcal{K}^0)\cong \co^{15}$. By Poincaré duality
$$H^0(\mathcal{K}^0)\cong\co\quad H^1(\mathcal{K}^0)=0\quad H^2(\mathcal{K}^0)=\co^6\quad H^3(\mathcal{K}^0)=\co^{15}\quad H^4(\mathcal{K}^0)=0$$
with the same weights as the cohomology of $\mathcal{K}$.\\
The Gysin sequence 
%$$\ldots \rightarrow H^i(\mathcal{S}_1)\rightarrow H^{i-1}(\mathcal{K}^0)\rightarrow H^{i+1}(\mathcal{K}^0)\rightarrow \ldots$$
\begin{equation*}
\begin{tikzpicture}
\node (O) at (0,0) {$\cdots$};
\node (A) at (2.5,0) {$H^k(\mathcal{S}_1)$};
\node (B) at (5,0) {$H^{k-1}(\mathcal{K}^0)$};
\node (C) at (7.5,0) {$H^{k+1}(\mathcal{K}^0)$};
\node (D) at (10,0) {$\cdots$};
\draw[->,thick] (O) -- (A) node [midway,above] {};
\draw[->,thick] (A) -- (B) node [midway,above] {};
\draw[->,thick] (B) -- (C) node [midway,above] {};
\draw[->,thick] (C) -- (D) node [midway,above] {};
\end{tikzpicture}
\end{equation*}
splits in the following subsequences
\begin{equation}
H^0(\mathcal{S}_1)\cong \co \quad H^3(\mathcal{S}_1)\cong\co^{21} \quad H^{4}(\mathcal{S}_1)\cong\co^{15};
\end{equation}
%\begin{equation}\label{eul1}
%0\rightarrow H^1(\mathcal{S}_1) \rightarrow \co \rightarrow \co^6 \rightarrow H^2(\mathcal{S}_1)\rightarrow 0;
%\end{equation}
\begin{equation}\label{eul1}
\begin{tikzpicture}[baseline]
\node (O) at (0,0) {0};
\node (A) at (2,0) {$H^1(\mathcal{S}_1)$};
\node (B) at (4,0) {$\co$};
\node (C) at (6,0) {$\co^{6}$};
\node (D) at (8,0) {$H^2(\mathcal{S}_1)$};
\node (E) at (10,0) {0;};
\draw[->,thick] (O) -- (A) node [midway,above] {};
\draw[->,thick] (A) -- (B) node [midway,above] {};
\draw[->,thick] (B) -- (C) node [midway,above] {};
\draw[->,thick] (C) -- (D) node [midway,above] {};
\draw[->,thick] (D)--(E) node [midway,above] {};
\end{tikzpicture}
\end{equation}
\begin{equation}
H^i(\mathcal{S}_1)=0\quad  \forall i\geq 5.
\end{equation}
In (\ref{eul1}) the map $\co\rightarrow \co^6$ is the product by the Euler class of a nontrivial bundle, which is non zero, therefore $H^1(\mathcal{S}_1)=0$ and $H^2(\mathcal{S}_1)=\co^5$. 
Recall that in this case both the cup product with the Euler class and the pushforward increase weights of (1,1). As a consequence of Poincaré duality the compact support cohomology groups of $\mathcal{S}_1$ are 
\begin{align*}
H^i_c(\mathcal{S}_1)&=0 &\forall i=0,\ldots 5 \text{ and }i=9&\quad\\
H^6_c(\mathcal{S}_1)&=\co^{15}  & \text{ of type }(2,2)&\quad \\
H^7_c(\mathcal{S}_1)&=\co^{21} &\text{ of types }19(3,3)+(2,4)+(4,2)&\\
H^8_c(\mathcal{S}_1)&=\co^5 & \text{ of types }3(4,4)+(3,5)+(5,3)&\\
\quad H^{10}_c(\mathcal{S}_1)&=\co &\text{ of type }(5,5),&
\end{align*}
and the $E-$polynomial of $\mathcal{S}_1$ is 
$$ E(\mathcal{S}_1)(u,v)=u^5v^5+u^3v^5+u^5v^3+3u^4v^4-19u^3v^3-u^2v^4-u^4v^2+15 u^2v^2.$$
\end{proof}
\subsubsection{Type $\mathrm{(ii)}$}

Now we want to compute the cohomology of the locus of stable pairs $(V,\Phi)$ where $V$ is a nontrivial extension of $L$ by $L^{-1}$ with $L\not\cong L^{-1}$. 

\begin{prop}
Let $V$ be a semistable vector bundle of type $\mathrm{(ii)}$. Then there is no Higgs field $\Phi$ such that the pair $(V,\Phi)$ is stable.
\end{prop}
\begin{proof}
Consider the universal line bundle $\luniv\rightarrow \mathcal{K}^0\times C$ and let $p:\mathcal{K}^0\times C \rightarrow \mathcal{K}^0$ be the projection onto the first factor. It is well known that non trivial extensions of $\luniv$ by $\luniv^{-1}$ are parametrized by $\mathbb{P}(R^1p_{*}\luniv^2)$: as  $R^1p_{*}\luniv^2$ is a local system on $\mathcal{K}^0$ of rank one, there exists a unique nontrivial extension up to isomorphism. Consider the universal extension bundle $\vuniv$: it fits in the short exact sequence
\begin{equation}\label{ses}
0\rightarrow \luniv\rightarrow \vuniv \rightarrow \luniv^{-1}\rightarrow 0
\end{equation}
and parametrizes all the vector bundles $V$ on $C$ of type (ii).
In order for $(\vuniv,\Phi)$ to be a stable Higgs bundle, the Higgs field must not preserve the subbundle $\luniv$.
By an abuse notation, we denote by $K_C$ the pullback of the canonical bundle on $C$ under the projection $\mathcal{K}^0\times C\rightarrow C$. Tensoring the sequence (\ref{ses}) by $K_C$ and applying the covariant functor $Hom(\vuniv,-)$ restricted to traceless endomorphisms gives
$$0\rightarrow Hom(\vuniv,\luniv\otimes K_C) \rightarrow End_0(\vuniv)\otimes K_C \rightarrow Hom(\vuniv,\luniv^{-1}\otimes K_C)\rightarrow 0.$$ 
Pushing forward to $\jup$ gives a long exact sequence
%\begin{align}\label{les1}
%0&\rightarrow p_*Hom (\vuniv,\luniv K_C)\rightarrow p_* End_0(\vuniv)\otimes K_C\rightarrow p_*\luniv^{-2}K_C\rightarrow \\
%&\rightarrow R^1p_*Hom(\vuniv,\luniv K_C)\rightarrow R^1p_* End_0(\vuniv)\otimes K_C\rightarrow R^1p_*\luniv^{-2}K_C\rightarrow 0.
%\end{align}
\begin{equation}\label{les1}
\begin{tikzpicture}[baseline, descr/.style={fill=white,inner sep=1.5pt}]
\matrix (m) [
matrix of math nodes,
row sep=1em,
column sep=2.5em,
text height=1.5ex, text depth=0.25ex
]
{ 0 &p_*Hom (\vuniv,\luniv K_C) & p_* End_0(\vuniv)\otimes K_C &p_*\luniv^{-2}K_C &\\
	& R^1p_*Hom(\vuniv,\luniv K_C) & R^1p_* Hom(\vuniv,\luniv K_C) & R^1p_*\luniv^{-2}K_C&0. \\
};

\path[overlay,->, font=\scriptsize,>=latex]
(m-1-1) edge (m-1-2)
(m-1-2) edge (m-1-3)
(m-1-3) edge node[above] {$\rho$} (m-1-4) 
(m-1-4) edge[out=355,in=175] node[descr,yshift=0.3ex] {$ext$} (m-2-2)
(m-2-2) edge (m-2-3)
(m-2-3) edge (m-2-4)
(m-2-4) edge (m-2-5);
\end{tikzpicture}
\end{equation}
A Higgs pair $(\vuniv,\Phi)$ is stable if and only if the Higgs field $\Phi$ lies in the complement of the kernel of the restriction map $\rho: p_* End_0(\vuniv)\otimes K_C\rightarrow p_*\luniv^{-2}K_C$.
In order to prove the proposition, we show that $\rho=0$. \\
Applying the contravariant functor $p_*Hom(-,\luniv K_C)$ to \eqref{ses} gives a long exact sequence

%\begin{align}
%0&\rightarrow p_*\luniv K_C\rightarrow p_* Hom(\vuniv,\luniv K_C)\rightarrow p_*K_C\rightarrow \\
%&\rightarrow R^1p_*\luniv K_C)\rightarrow R^1p_* Hom(\vuniv,\luniv K_C)\rightarrow R^1p_*K_C\rightarrow 0.
%\end{align}
\begin{equation}\label{les2}
\begin{tikzpicture}[baseline, descr/.style={fill=white,inner sep=1.5pt}]
\matrix (m) [
matrix of math nodes,
row sep=1em,
column sep=2.5em,
text height=1.5ex, text depth=0.25ex
]
{ 0 & p_*\luniv K_C & p_* Hom(\vuniv,\luniv K_C) &p_*K_C &\\
	& R^1p_*\luniv K_C) & R^1p_* Hom(\vuniv,\luniv K_C) & R^1p_*K_C &0. \\
};

\path[overlay,->, font=\scriptsize,>=latex]
(m-1-1) edge (m-1-2)
(m-1-2) edge (m-1-3)
(m-1-3) edge (m-1-4)
(m-1-4) edge[out=355,in=175] node[descr,yshift=0.3ex] {$ext$} (m-2-2)
(m-2-2) edge (m-2-3)
(m-2-3) edge (m-2-4)
(m-2-4) edge (m-2-5);
\end{tikzpicture}
\end{equation}
Consider the fibre of (\ref{les2})  on a point $L\in \mathcal{K}^0$. One has
\begin{equation*}
\begin{tikzpicture}
\node (O) at (0,0) {$H^1(L^2K_C)$};
\node (A) at (3,0) {$H^1(V^*LK_C)$};
\node (B) at (6,0) {$H^1(K_C)$};
\node (C) at (8,0) {0.};
\draw[->,thick] (O) -- (A) node [midway,above] {};
\draw[->,thick] (A) -- (B) node [midway,above] {};
\draw[->,thick] (B) -- (C) node [midway,above] {};
\end{tikzpicture}
\end{equation*}
As $H^1(L^2K_C)=0$, then $H^1(V^*LK_C)\cong H^1(K_C)\cong \co$, thus $R^1p_*Hom(\vuniv,\luniv K_C)$ is a local system of rank 1 on $\mathcal{K}^0\times C$.
Now consider (\ref{les1}) in the fibre over $L\in \mathcal{K}^0$:
%\[
%\xymatrix{ 
%	0\ar[r] &H^0(V^*LK_C)\ar[r] &H^0(End_0(V)\otimes K_C)\ar[r] &H^0(L^{-2}K_C)\ar[r]^{ext}&H^1(V^*LK_C)\ar[r]&\\
%	\ar[r] &H^1(End_0(V)\otimes K_C)\ar[r] &H^1(L^{-2}K_C)\ar[r]&0\\
%} 
%\]

\begin{equation*}
\begin{tikzpicture}[baseline, descr/.style={fill=white,inner sep=1.5pt}]
\matrix (m) [
matrix of math nodes,
row sep=1em,
column sep=2.5em,
text height=1.5ex, text depth=0.25ex
]
{ 0 & H^0(V^*LK_C) & H^0(End_0(V)\otimes K_C) &H^0(L^{-2}K_C) &\\
	& H^1(V^*LK_C) & H^1(End_0(V)\otimes K_C) & H^1(L^{-2}K_C) &0. \\
};

\path[overlay,->, font=\scriptsize,>=latex]
(m-1-1) edge (m-1-2)
(m-1-2) edge (m-1-3)
(m-1-3) edge (m-1-4)
(m-1-4) edge[out=355,in=175] node[descr,yshift=0.3ex] {$ext$} (m-2-2)
(m-2-2) edge (m-2-3)
(m-2-3) edge (m-2-4)
(m-2-4) edge (m-2-5);
\end{tikzpicture}
\end{equation*}

As seen before, $H^1(V^*LK_C)\cong H^1(K_C)\cong\co$ and $H^0(L^{-2}K_C)\cong \co$, so the map \ita{ext} in \eqref{les1} is either 0 or an isomorphism. However, as $V$ is a nontrivial extension, such a map has to be non zero, thus it is an isomorphism. As a result $\rho$ is zero.\\
\end{proof}

\subsubsection{Type $\mathrm{(iii)}$}
We consider stable Higgs bundles with underlying vector bundle $V=L\oplus L$ with $L\cong L^{-1} \in \mathcal{K}_0$.

\begin{prop} Let $\gr{S}_3$ be the locus of stable Higgs bundles with underlying vector bundle $V=\mathcal{O}\oplus \mathcal{O}$. Then the locus $\mathcal{S}_3$ of stable Higgs pairs of type $\mathrm{(iii)}$ is the union of 16 copies of $\gr{S}_3$ and its $E$-polynomial is
$$ E(\mathcal{S}_3)(u,v)=16u^3v^3-16u^2v^2$$
\end{prop}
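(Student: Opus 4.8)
The plan is to reduce the whole statement to one local computation by a tensoring argument, and then to identify $\mathbf{S}_3$ with an explicit quasi‑affine threefold whose $E$‑polynomial is immediate.

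First I would dispose of the "$16$ copies" claim. A bundle of type (iii) is $V=L\oplus L^{-1}$ with $L\cong L^{-1}$, i.e. $L^2\cong\mathcal{O}$, and the line bundles with $L^2\cong\mathcal{O}$ are exactly the $2^{2g}=16$ points of order two in $Jac(C)$, which are precisely the $16$ points of $\mathcal{K}_0$. For such an $L$ one has $V\cong L\oplus L$, and tensoring by $L^{-1}$ gives $(L\oplus L,\Phi)\mapsto(\mathcal{O}\oplus\mathcal{O},\Phi)$. This is an isomorphism of the Higgs moduli problem: it preserves slopes (hence stability), preserves the trivial determinant condition since $L^2\cong\mathcal{O}$, and is canonical on Higgs fields because $End_0(W\otimes L^{-1})\cong End_0(W)$. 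The sixteen subloci are pairwise disjoint, since the split underlying bundle $L\oplus L$ determines $L$. Hence $\mathcal{S}_3\cong\bigsqcup_{16}\mathbf{S}_3$, and by additivity of $E$‑polynomials $E(\mathcal{S}_3)=16\,E(\mathbf{S}_3)$.

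It then remains to compute $E(\mathbf{S}_3)$. On $V=\mathcal{O}\oplus\mathcal{O}$ a traceless Higgs field is an element of $H^0(End_0(V)\otimes K_C)=\mathfrak{sl}_2\otimes H^0(K_C)$, so after fixing a basis $\omega_1,\omega_2$ of $H^0(K_C)\cong\co^2$ I write $\Phi\leftrightarrow(A_1,A_2)\in\mathfrak{sl}_2\times\mathfrak{sl}_2\cong\co^6$. Two such pairs define isomorphic $SL$‑Higgs bundles exactly when they are conjugate under $PGL_2=\mathrm{Aut}(V)/(\text{scalars})$ acting by simultaneous conjugation. Since every line subbundle of $\mathcal{O}^2$ has nonpositive degree, the only possible destabilisers are the constant line subbundles, and such a line is $\Phi$‑invariant iff it is a common eigenvector of $A_1,A_2$; thus $(V,\Phi)$ is stable iff $A_1,A_2$ have no common eigenvector. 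Using the classical fact that two trace‑zero $2\times2$ matrices share an eigenvector iff their commutator is singular, the stable locus is $W^s=\{\det[A_1,A_2]\neq0\}$, on which (by Schur's lemma the commutant is scalar) $PGL_2$ acts freely.

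Next I would pass to the GIT picture. The first fundamental theorem gives that $\co[\mathfrak{sl}_2\times\mathfrak{sl}_2]^{PGL_2}$ is a polynomial ring on $x=\operatorname{tr}(A_1^2)$, $y=\operatorname{tr}(A_2^2)$, $z=\operatorname{tr}(A_1A_2)$, so the affine quotient is $\co^3$, and a direct computation shows that up to a nonzero constant $\det[A_1,A_2]=z^2-xy$; the non‑stable (simultaneously diagonalisable) pairs are exactly those lying over the quadric cone $Q=\{z^2=xy\}$. As any two irreducible pairs with equal invariants are conjugate, each fibre of $W^s\to\co^3$ over $\co^3\setminus Q$ is a single free orbit, so the induced map $\mathbf{S}_3\to\co^3\setminus Q$ is a bijective birational morphism onto the smooth, hence normal, variety $\co^3\setminus Q$, and therefore an isomorphism by Zariski's Main Theorem. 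Finally $E(\co^3)=u^3v^3$, while writing $Q\setminus\{0\}$ as a $\co^*$‑bundle over the conic $\mathbb{P}^1$ gives $E(Q)=1+(uv-1)(1+uv)=u^2v^2$; hence $E(\mathbf{S}_3)=u^3v^3-u^2v^2$ and $E(\mathcal{S}_3)=16u^3v^3-16u^2v^2$.

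The hard part will be the identification $\mathbf{S}_3\cong\co^3\setminus Q$: it bundles together the invariant‑theoretic description of the quotient, the translation of Higgs‑stability into the condition $\det[A_1,A_2]\neq0$, and the orbit analysis needed to see that the geometric quotient of the irreducible locus is genuinely the complement of the discriminant quadric rather than a finite cover or a proper subset. Everything afterward is a one‑line additivity computation.
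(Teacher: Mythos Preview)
Your argument is correct, and it reaches the same endpoint as the paper by a genuinely different and considerably shorter route. Both proofs begin identically: tensor by $L^{-1}$ to reduce to $V=\mathcal{O}\oplus\mathcal{O}$, write a traceless Higgs field as a pair $(A_1,A_2)\in\mathfrak{sl}_2\times\mathfrak{sl}_2$, and translate stability into the condition $\det[A_1,A_2]\neq0$ (the paper cites Shemesh for this last step). From there the paper works upstairs: it computes $H^*_c$ of the quartic hypersurface $\{\det[A_1,A_2]=0\}\subset\co^6$ by an explicit stratification via the map to the cone $\{\alpha^2+4\beta\gamma=0\}\subset\co^3$, deduces $H^*_c(\co^6\setminus Q)$ by additivity, and then runs the Gysin sequence for the (generically free) $SL_2$-action to descend to $\mathbf{S}_3$. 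You instead work downstairs from the start: invoking the classical description $\co[\mathfrak{sl}_2^2]^{PGL_2}=\co[x,y,z]$ and the identity $\det[A_1,A_2]=xy-z^2$, you identify $\mathbf{S}_3$ directly with $\co^3\setminus\{z^2=xy\}$ and read off $E=u^3v^3-u^2v^2$ in one line. Your approach is cleaner and avoids several long exact sequences, at the cost of importing the first fundamental theorem for $\mathfrak{sl}_2$-pairs and the orbit-closure analysis; the paper's approach is more self-contained but substantially longer.
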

\begin{proof}
Up to tensor by $L\in \mathcal{K}_0$ one may restrict to the case $L=\mathcal{O}$, so that $V$ is just the trivial bundle $\mathcal{O}\oplus\mathcal{O}$. In this case $H^0(\mathcal{E}nd_0(V)\otimes K_C)\cong H^0(K_C)\otimes \sla\cong \co^2\otimes \sla$ and the Higgs field is of the form $$\Phi=\left( \begin{array}{cc}
a&b\\
c&-a\\
\end{array}\right) \text{ with }  a,b,c\in H^0(K_C)$$
The bundle is not stable if and only if $\Phi$ is conjugate to an upper triangular matrix of elements of $H^0(K_C)$. As the action of $\mathrm{SL}(2,\co) $ on $H^0(K_C)\otimes \sla$ is trivial on the first factor, one can consider $\Phi\in H^0(K_C)\otimes \sla $ as a pair of matrices $(A,B)\in \sla\oplus\sla$ on which $\mathrm{SL}(2,\co)$ acts by simultaneous conjugation. Then $\Phi$ is conjugate to an upper triangular matrix of elements of $H^0(K_C)$ if and only if $A$ and $B$ are simultaneously triangulable, i.e. if they have a common eigenspace. By a result of Shemesh \cite{Sh} this happens if and only if $\ker[A,B]\neq 0$, that is $\det([A,B])=0$.
Writing \begin{equation}\label{mat}
A=\left( \begin{array}{cc}
x_1&x_2\\
x_3&-x_1\\
\end{array}\right)\qquad B=\left( \begin{array}{cc}
y_1&y_2\\
y_3&-y_1\\
\end{array}\right)\end{equation}
then
$$[A,B]=\left( \begin{array}{cc}
x_2y_3-y_2x_3&2(x_1y_2-x_2y_1)\\
2(x_3y_1-x_1y_3)&-(x_2y_3-y_2x_3)\\
\end{array}\right).$$
One can interpret the locus of simultaneously triangulable matrices $(A,B)\in \sla\oplus \sla$ as a quartic hypersurface in $\co^6$ with coordinates $(x_1,x_2,x_3,y_1,y_2,y_3)$ with equation
$$Q: (x_2y_3-y_2x_3)^2+4(x_1y_2-x_2y_1)(x_3y_1-x_1y_3)=0, $$
given by annihilation of $\det[A,B]$.

\begin{lem}
	A Higgs bundle $(V,\Phi)$ with $V=\mathcal{O}\oplus\mathcal{O}$ is stable if and only if $\Phi$ lies in 
	$$ \gr{S}_3:=(\co^6-Q)\sslash \mathrm{SL}(2,\co)$$ 
	where $\mathrm{SL}(2,\co)$ acts as the simultaneous conjugation on the matrices $A$ and $B$ as in (\ref{mat}).
\end{lem}
\begin{cor}
	The locus of stable Higgs bundles of type $\mathrm{(iii)}$ is isomorphic to 16 copies of $\gr{S}_3$, one for each point of $\mathcal{K}_0$. 
\end{cor}

Consider the quartic hypersurface $Q$ in $\co^6$. 
Setting
\begin{align*}
\alpha &=x_2y_3-y_2x_3\\
\beta &=x_1y_2-x_2y_1\\
\gamma &=x_3y_1-x_1y_3
\end{align*}
then for every $(x_1,x_2,x_3,y_1,y_2,y_3)\in Q$, $(\alpha,\beta,\gamma)$ satisfy the equation
$$\alpha^2+4\beta\gamma=0.$$
This defines a map $f:Q \rightarrow\mathcal{C}:=\{(\alpha,\beta,\gamma)\in \co^3\mid \alpha^2+4\beta\gamma=0\}$
$$
f(x_1,x_2,x_3,y_1,y_2,y_3)= (x_2y_3-y_2x_3,x_1y_2-x_2y_1,x_3y_1-x_1y_3).
$$
Our strategy to compute the cohomology of $(\co^6-Q)$ is the following:
\begin{enumerate}[1)]
	\item we decompose $Q$ as a disjoint union of the close set $Q_0=f^{-1}(0)$ and its open complement $Q-Q_0= f^{-1}(\mathcal{C}-\{0\})$;
	\item  we compute the cohomology with compact support of both $Q_0$ and $Q-Q_0$ and use the additivity property to compute the cohomology with compact support of $Q$;
	\item again, as $\co^6=Q\sqcup(\co^6-Q)$, by the additivity property of the cohomology with compact support one can compute the cohomology of $\co^6-Q$.
\end{enumerate}

Observe that $\alpha,\beta,\gamma$ are the minors of order 2 of the matrix
\begin{equation}\label{minors}\left(\begin{array}{cc}
x_1 &y_1\\
x_2 &y_2\\
x_3 &y_3\\
\end{array}
\right). 
\end{equation}
Also, fixing $(x_1,x_2,x_3, y_1,y_2,y_3)$ and the corresponding point $(\alpha,\beta,\gamma) \in \mathcal{C}$, one notice that both $(x_1,x_2,x_3)$ and $(y_1,y_2,y_3)$ are orthogonal to $(\alpha,\frac{\gamma}{2},\frac{\beta}{2})$, i.e. they satisfy the equations
$$2\alpha x_1+\gamma x_2+\beta x_3=0\qquad 2\alpha y_1+\gamma y_2+\beta y_3=0$$

If $(\alpha,\beta,\gamma)\neq (0,0,0)$, say $\beta\neq 0$, then 
$$x_3=\dfrac{-2\alpha x_1-\gamma x_2}{\beta}, \qquad y_3=\dfrac{-2\alpha y_1-\gamma y_2}{\beta}.$$
Substituting these values in (\ref{minors}) and annihilating the minors of order two gives three equations all identical to 
$$x_1y_2-x_2y_1=\dfrac{\beta}{2}.$$

Therefore the fibre of the map  $f$ in a point of $\mathcal{C}-\{0\}$ is a quadric in $\co^4$, isomorphic to $\mathrm{SL}(2,\co)$. Also $\mathcal{C}-\{0\}$ is homotopy equivalent to $\re\mathbb{P}^3$, thus it has fundamental group $\mathbb{Z}_2$ and the monodromy outside the origin is trivial as it the same as the one described in \cite[Section 3.1]{FK}. As a result, we can compute the cohomology with compact support of $Q-Q_0=f^{-1}(\mathcal{C}-0)$ via K\"unneth formula. 
We have:
$$ \quad H^4_c(Q-Q_0)=\co\quad H^7_c(Q-Q_0)=\co^2\quad H^{10}_c(Q-Q_0)=\co \quad   H^i_c(Q-Q_0)=0 \text{ otherwise }.$$

Now, we need to compute the cohomology of $Q_0$: first observe that the condition $\alpha,\beta,\gamma=0$ implies that the matrix (\ref{minors}) has rank $\leq 1$, that is, $(y_1,y_2,y_3)$ is a multiple of $(x_1,x_2,x_3)$. Thus points in $Q_0$ are parametrized by $(\co^3-\{0\})\times \co \sqcup \{0\}\times \co^3$. We observe that $Q_0$ has dimension 4 and the former is an open set in it, while the latter is closed. Therefore by proposition \ref{ap} one has
$$\begin{array}{lll}
H_c^3((\co^3-\{0\})\times \co)\cong\co &\quad H^8_c((\co^3-\{0\})\times \co)\cong \co\quad H^i_c((\co^3-\{0\})\times \co)=0 \text{ otherwise}\\
H^6_c(\{0\}\times\co^3)\cong \co &\quad H^i_c(\{0\}\times \co^3)=0 \text{ otherwise}. &
\end{array},$$
Hence $$H^3_c(Q_0)\cong H^6_c(Q_0)\cong H^8_c(Q_0)\cong\co, \quad  H^i_c(Q_0)=0 \text{ otherwise} .$$

Again, by proposition \ref{ap} there is a long exact sequence 
\begin{equation*}
\begin{tikzpicture}[baseline]
\node (O) at (0,0) {$\cdots$};
\node (A) at (2.5,0) {$H^i_c(Q-Q_0)$};
\node (B) at (5,0) {$H^i_c(Q)$};
\node (C) at (7.5,0) {$H^i_c(Q_0)$};
\node (O') at (10,0) {$\cdots$};
\draw[->,thick] (O) -- (A) node [midway,above] {};
\draw[->,thick] (A) -- (B) node [midway,above] {};
\draw[->,thick] (B) -- (C) node [midway,above] {};
\draw[->,thick] (C) -- (O') node [midway,above] {};
\end{tikzpicture}
\end{equation*}

Now, $H^i_c(Q)=0$ for any $i\geq 5$ since $Q$ is affine and from the long exact sequence one can conclude that $H^7_c (Q)\cong H^8_c(Q)\cong H^{10}_c(Q)\cong\co$ and $H^i_c(Q)=0$ otherwise. 

Finally from proposition \ref{ap} one gets the cohomology with compact support of $\co^6\setminus Q$:
$$H^8_c (\co^6-Q)\cong H^9_c(\co^6- Q)\cong H^{11}_c(\co^6-Q)\cong H^{12}_c(\co^6-Q)\cong \co,\quad H^i_c(\co^6-Q)=0.$$

Observe that $\mathrm{SL}(2,\co$) acts on $\co^6-Q$ with a stabilizer which is at worst $\mathbb{Z}_2$, therefore one can compute its cohomology by considering it as a fibre bundle with fibre $\mathrm{SL}(2,\co)$ on $\mathcal{S}_3$

%https://mathoverflow.net/questions/120351/equivariant-cohomology-for-actions-with-finite-stabilizers

As $\mathrm{SL}(2,\co)$ has the same homotopy type as $S^3$, the Gysin sequence 
%$$\ldots \rightarrow H^i(\co^6-Q)\rightarrow H^{i-3}(\gr{S}_3)\rightarrow H^{i+1}(\gr{S}_3)\rightarrow \ldots$$
\begin{equation*}
\begin{tikzpicture}[baseline]
\node (O) at (0,0) {$\cdots$};
\node (A) at (2.5,0) {$H^i(\co^6-Q)$};
\node (B) at (5,0) {$H^{i-3}(\gr{S}_3)$};
\node (C) at (7.5,0) {$H^{i+1}(\gr{S}_3)$};
\node (O') at (10,0) {$\cdots$};
\draw[->,thick] (O) -- (A) node [midway,above] {};
\draw[->,thick] (A) -- (B) node [midway,above] {};
\draw[->,thick] (B) -- (C) node [midway,above] {};
\draw[->,thick] (C) -- (O') node [midway,above] {};
\end{tikzpicture}
\end{equation*}
gives
\begin{equation}
H^0(\gr{S}_3)\cong H^1(\gr{S}_3)\cong\co\qquad  H^2(\gr{S}_3)=0
\end{equation}
%\begin{equation}
%0\rightarrow H^3(\gr{S}_3)\rightarrow \co\rightarrow \co \rightarrow H^4(\gr{S}_3)\rightarrow \co \rightarrow \co\rightarrow H^5(\gr{S}_3)\rightarrow 0
%\end{equation}
\begin{equation}\label{lses}
\begin{tikzpicture}[baseline]
\node (O) at (0,0) {0};
\node (A) at (1.5,0) {$H^3(\gr{S}_3)$};
\node (B) at (3,0) {$\co$};
\node (C) at (4.5,0) {$\co$};
\node (D) at (6,0) {$H^4(\gr{S}_3)$};
\node (E) at (7.5,0) {$\co$};
\node (F) at (9,0) {$\co$};
\node (G) at (10.5,0) {$H^5(\gr{S}_3)$};
\node (O') at (12,0) {0};
\draw[->,thick] (O) -- (A) node [midway,above] {};
\draw[->,thick] (A) -- (B) node [midway,above] {};
\draw[->,thick] (B) -- (C) node [midway,above] {};
\draw[->,thick] (C) -- (D) node [midway,above] {};
\draw[->,thick] (D) -- (E) node [midway,above] {};
\draw[->,thick] (E) -- (F) node [midway,above] {};
\draw[->,thick] (F) -- (G) node [midway,above] {};
\draw[->,thick] (G) -- (O') node [midway,above] {};
\end{tikzpicture}
\end{equation}
\begin{equation}
H^6(\gr{S}_3)=0,\quad H^4(\gr{S}_3)\cong H^8(\gr{S}_3)\cong H^{12}(\gr{S}_3)
\end{equation}
\begin{equation}
H^3(\gr{S}_3)\cong H^7(\gr{S}_3)\cong H^{11}(\gr{S}_3)\quad H^5(\gr{S}_3)\cong H^9(\gr{S}_3)\quad H^6(\gr{S}_3)\cong H^{10}(\gr{S}_3)=0.
\end{equation}
Since $\gr{S}_3$ is nonsingular connected but not compact, $H^{12}(\gr{S}_3)\cong H^0_c(\gr{S}_3)=0$, thus  $H^4(\gr{S}_3)\cong H^8(\gr{S}_3)=0$. Therefore from \eqref{lses} we deduce that $H^3(\gr{S}_3)\cong  H^5(\gr{S}_3)=0$, $H^7(\gr{S}_3)\cong  H^{11}(\gr{S}_3)=0$ and $H^9(\gr{S}_3)=0$ and the $E-$polynomial of $\gr{S}_3$ is 
$$ E(\gr{S}_3)(u,v)=u^3v^3-u^2v^2$$
\end{proof}
\subsubsection{Type $\mathrm{(iv)}$}

We now consider stable Higgs bundles of type $\mathrm{(iv)}$ and we prove the following result.
\begin{prop}\label{es4}
Let $\gr{S}_4$ be the locus of stable Higgs bundles whose underlying vector is a nontrivial extension of $\mathcal{O}$ by itself. Then the locus $\mathcal{S}_4$ of stable Higgs bundles of type $\mathrm{(iv)}$ is the union of 16 copies of $\gr{S}_4$ and its $E$-polynomial is 
$$ E(\mathcal{S}_4)=16u^4v^4-16u^2v^2$$
\end{prop}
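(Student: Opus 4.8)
The plan is to reduce to $L=\mathcal{O}$, realize $\gr{S}_4$ as a family over $\mathbb{P}^1$ obtained by removing a subbundle and quotienting by a unipotent group, and extract $E(\gr{S}_4)$ by additivity. First I would observe that for a $2$-torsion $L\in\mathcal{K}_0$ the operation $V\mapsto V\otimes L$ is an automorphism of $\mdol^{SL}$ respecting stability and the trivial determinant, carrying type (iv) bundles over $L$ to type (iv) bundles over $\mathcal{O}$; hence $\mathcal{S}_4$ is a disjoint union of $16$ copies of $\gr{S}_4$ and $E(\mathcal{S}_4)=16\,E(\gr{S}_4)$, reducing everything to the case in which $V$ is a nontrivial extension of $\mathcal{O}$ by $\mathcal{O}$.

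Next I would set up the base and the stability condition. Since $Ext^1(\mathcal{O},\mathcal{O})=H^1(\mathcal{O})\cong\co^2$ and the only freedom is the scaling by $Aut(\mathcal{O})\times Aut(\mathcal{O})$, the isomorphism classes of such $V$ are parametrized by $\mathbb{P}(H^1(\mathcal{O}))\cong\mathbb{P}^1$; the crucial fact is that $h^0(V)=1$ for every nontrivial extension, so the sub-line bundle $\mathcal{O}\subset V$ is canonical, distinct points of $\mathbb{P}^1$ give non-isomorphic $V$, and $\mathcal{O}$ is the unique degree-$0$ destabilizing sub-line bundle. Thus $(V,\Phi)$ is stable precisely when $\Phi$ does not preserve this canonical $\mathcal{O}$. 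Filtering $End_0(V)$ by the flag $\mathcal{O}\subset V$ and letting $\rho$ be the induced projection $H^0(End_0(V)\otimes K_C)\to H^0(Hom(\mathcal{O},V/\mathcal{O})\otimes K_C)=H^0(K_C)$, stability is equivalent to $\rho(\Phi)\neq0$. A Riemann--Roch and Serre duality computation --- using that the only traceless endomorphism of $V$ is the nilpotent $N$, so $h^0(End_0(V))=1$ --- gives $\dim H^0(End_0(V)\otimes K_C)=4$, while cupping the extension class of the associated filtration against $H^0(K_C)$ shows $\rho$ has rank exactly $1$; the stable fields therefore form $\co^4\setminus\co^3$.

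I would then globalize over $\mathbb{P}^1$. Using a universal extension $\vuniv$ on $\mathbb{P}^1\times C$ and the projection $p$ to $\mathbb{P}^1$, the constancy of $h^0$ and $h^1$ lets me invoke Grauert to make $\mathcal{E}:=p_*(End_0(\vuniv)\otimes K_C)$ a rank-$4$ vector bundle, with $\ker\rho\subset\mathcal{E}$ a rank-$3$ subbundle; the stable locus is the total space $\mathcal{E}\setminus\ker\rho$. Finally I would quotient by automorphisms: for each $V$ one has $Aut(V)\cong\co^*\times\co$ with the scalars acting trivially by conjugation, so the effective action is the unipotent $\mathbb{G}_a$ generated by $N$, and the bracket computation $[N,\Phi]\neq0$ whenever $\rho(\Phi)\neq0$ shows it is free on the stable locus. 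Globally this is a free action of a $\mathbb{G}_a$-bundle over $\mathbb{P}^1$, hence a Zariski-locally trivial $\mathbb{G}_a$-torsor with quotient $\gr{S}_4$, so $E$-polynomials divide by $uv$. Combining this with additivity of the $E$-polynomial yields
\[
E(\gr{S}_4)=\frac{E(\mathcal{E})-E(\ker\rho)}{uv}=\frac{(1+uv)\big((uv)^4-(uv)^3\big)}{uv}=(1+uv)(u^3v^3-u^2v^2)=u^4v^4-u^2v^2,
\]
and multiplying by $16$ gives $E(\mathcal{S}_4)=16u^4v^4-16u^2v^2$.

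The main obstacle is not the fibrewise dimension counts, which are routine, but making the family-level statements rigorous: proving that $\rho$ has constant rank $1$ over all of $\mathbb{P}^1$ so that $\mathcal{E}$ and $\ker\rho$ are honest bundles of the claimed ranks, and verifying that the fibrewise unipotent automorphisms assemble into a group scheme acting freely with a Zariski-locally trivial quotient, which is exactly what legitimizes dividing the $E$-polynomial by $uv$.
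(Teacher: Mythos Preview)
Your proposal is correct and follows essentially the same route as the paper: reduce to $L=\mathcal{O}$ by tensoring, parametrize nontrivial extensions by $\mathbb{P}^1$, globalize via the universal extension to realize the Higgs fields as a rank-$4$ bundle over $\mathbb{P}^1$ with the unstable ones forming a rank-$3$ subbundle, and then quotient the complement by the unipotent automorphism group $(\co,+)$. The only difference is in the last step: the paper repackages the quotient geometrically as a Zariski locally trivial $\co^2$-bundle over a $\co^*$-bundle over $\mathbb{P}^1$ and runs a Gysin sequence to compute the individual Betti numbers, whereas you go straight to the $E$-polynomial by additivity and division by $E(\mathbb{G}_a)=uv$. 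Your shortcut is cleaner for the $E$-polynomial alone; the paper's structural lemma has the side benefit of giving the Betti numbers of $\gr{S}_4$ directly, without appealing to purity.
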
	 

\begin{proof}	 
As in type $\mathrm{(iii)}$, it is not restrictive to assume $L\cong \mathcal{O}$. Let $V$ be a nontrivial extensions of $\mathcal{O}$ by itself: the isomorphism classes of such bundles are parametrized by 
\begin{equation}\label{ses4}
\mathbb{P}(\mathrm{Ext}^1(\mathcal{O},\mathcal{O}))\cong \mathbb{P}^1.
\end{equation}
Thus there exists a universal extension bundle on $\mathbb{P}^1\times C$
$$0\rightarrow \mathcal{O}\rightarrow \vuniv\rightarrow\mathcal{O}\rightarrow 0.$$
Let $p\colon\mathbb{P}^1\times C\rightarrow \mathbb{P}^1$ be the projection: as in the type (ii) case one can tensor the short exact sequence above by $K_C$, apply the covariant functor $Hom(\vuniv,-)$ and pushforward to $\mathbb{P}^1$ and end up with the long exact sequence

\begin{equation}\label{*sl}
	\begin{tikzpicture}[baseline, descr/.style={fill=white,inner sep=1.5pt}]
	\matrix (m) [
	matrix of math nodes,
	row sep=1em,
	column sep=2.5em,
	text height=1.5ex, text depth=0.25ex
	]
	{ 0 & p_*Hom(\vuniv,K_C) & p_*(End_0(\vuniv)\otimes K_C) &p_*Hom(\vuniv,K_C)&\\
		& R^1p_*Hom(\vuniv,K_C) & R^1p_*(End_0(\vuniv)\otimes K_C) & R^1p_*Hom(\vuniv,K_C) &0. \\
	};
	
	\path[overlay,->, font=\scriptsize,>=latex]
	(m-1-1) edge (m-1-2)
	(m-1-2) edge (m-1-3)
	(m-1-3) edge (m-1-4)
	(m-1-4) edge[out=355,in=175] node[descr,yshift=0.3ex] {$ext$} (m-2-2)
	(m-2-2) edge (m-2-3)
	(m-2-3) edge (m-2-4)
	(m-2-4) edge (m-2-5);
	\end{tikzpicture}
\end{equation}
Stable Higgs bundles are those with Higgs field in the complement of the kernel of the map
$$p_*(End_0(\vuniv)\otimes K_C)\rightarrow p_*Hom(\vuniv,K_C)$$
or, equivalently, the complement of the image of $p_*Hom(\vuniv,K_C)$ in  $p_*(End_0(\vuniv)\otimes K_C)$.
First notice that $p_*Hom(\vuniv,K_C)\cong p_*K_C$, which is a vector bundle of rank 2 and similarly that $R^1p_*Hom(\vuniv,K_C)\cong R^1p_*K_C$. As the extension is nontrivial, the map \ita{ext} is non zero and its kernel has rank 1. 
Starting from (\ref{ses4}), tensoring with $K_C$, applying the functor $Hom(-,\mathcal{O})$  restricted to traceless endomorphisms and pushing forward to $\mathbb{P}^1$ one obtains another long exact sequence
%\[
%\xymatrix{ 
%	0\ar[r] &p_*K_C\ar[r] &p_*Hom(\vuniv,K_C)\ar[r]  &p_*K_C\ar[r]^{ext}&R^1p_*K_C\ar[r]&\ldots
%}
%\]

\begin{equation}\label{**sl}
\begin{tikzpicture}[baseline]
\node (O) at (0,0) {0};
\node (A) at (2.5,0) {$p_*K_C$};
\node (B) at (5,0) {$p_*Hom(\vuniv,K_C)$};
\node (C) at (7.5,0) {$p_*K_C$};
\node (D) at (10,0) {$R^1p_*K_C$};
\node (O') at (12.5,0) {$\cdots$};
\draw[->,thick] (O) -- (A) node [midway,above] {};
\draw[->,thick] (A) -- (B) node [midway,above] {};
\draw[->,thick] (B) -- (C) node [midway,above] {};
\draw[->,thick] (C) -- (D) node [midway,above] {$ext$};
\draw[->,thick] (D) -- (O') node [midway,above] {};
\end{tikzpicture}
\end{equation}
Notice that since $R^1p_*K_C$ has rank 1 and the map $ext$ is non zero, the last map is surjective. Hence, the cokernel of $p_*Hom(\vuniv,K_C)\rightarrow p_*K_C$ has rank 1 and consequently $p_*Hom(\vuniv,K_C)$ has rank 3. 
Returning to \eqref{*sl}, we conclude that $p_*End_0(V)\otimes K_C$ is a vector bundle of rank 4, thus the locus of stable pairs is fibrewise the complement of a hyperplane. \\
Finally, automorphisms have to be taken into account: the group of automorphisms of a nontrivial extension of $\mathcal{O}$ by itself is the additive group $(\co,+)\subset \mathrm{SL}(2,\co)$, and an element $t\in (\co,+)$ acts on the Higgs field $\Phi$ as 
$$t.\Phi=\left(\begin{array}{cc}
1&t\\
0&1
\end{array}\right)\left(\begin{array}{cc}
a&b\\
c&-a
\end{array}\right)\left(\begin{array}{cc}
1&-t\\
0&1
\end{array}\right)=\left(\begin{array}{cc}
1a+tc&b-2ta-t^2c\\
c&-a-tc
\end{array}\right).
$$

\begin{lem}\label{s4}
	$\gr{S}_4$ is a $\co^2$- bundle over a $\co^*$- bundle over $\mathbb{P}^1$. All bundles are Zariski locally trivial. 
\end{lem}

\begin{proof}
	Let $A$ be the kernel of the extension map in \eqref{*sl}, minus the zero section: thus $A$ is a $\co^*$-bundle over $\mathbb{P}^1$. One can think of $p_*(End_0(\vuniv)\otimes K_C)-p_*Hom(\vuniv,K_C)$ as vector bundle of rank 3 over $A$. Similarly, the kernel of the extension map of \eqref{**sl} gives rise to a vector bundle $\mathcal{A}$ over $A$ of rank 1 and the map 
	$$p_*Hom(\vuniv)\rightarrow p_*End_0(\vuniv)\otimes K_C)$$ lifts to a $(\co,+)$-equivariant map 
	$$[p_*(End_0(\vuniv)\otimes K_C)-p_*Hom(\vuniv,K_C)]\rightarrow \mathcal{U}$$ 
	of vector bundles over $A$ whose kernel has rank 2. The automorphism action of $(\co,+)$ on $\mathcal{A}$ is linear and given by $a\mapsto a+tc$, hence the quotient $\mathcal{A}/\co$ is $A$ itself. As the map above is equivariant, one has that
	$$[p_*(End_0(\vuniv)\otimes K_C)-p_*Hom(\vuniv,K_C)]/\co\rightarrow \mathcal{A}/\co\cong A$$ 
	is a vector bundle of rank 2 over $A$.
\end{proof}

\begin{cor}\label{core}
	The locus of stable Higgs bundles of type $\mathrm(iv)$ is isomorphic to 16 copies of $\gr{S}_4$, one for each point of $\jo$. 
\end{cor}

Lemma (\ref{s4}) allows to compute the Betti numbers of $\gr{S}_4$: since $\gr{S}_4$ is homotopy equivalent to a $\co^*$-bundle on $\mathbb{P}^1$, the Gysin sequence reads as
\begin{equation}
H^0(\gr{S}_4) \cong H^0(\mathbb{P}^1) \cong \co;
\end{equation}
%$$0 \rightarrow H^1(\gr{S}_4) \rightarrow \co \rightarrow\co \rightarrow H^2(\gr{S}_4)\rightarrow 0$$
\begin{equation}\label{eul2}
\begin{tikzpicture}[baseline]
\node (O) at (0,0) {0};
\node (A) at (2,0) {$H^1(\gr{S}_4) $};
\node (B) at (4,0) {$\co$};
\node (C) at (6,0) {$\co$};
\node (D) at (8,0) {$H^2(\gr{S}_4)$};
\node (E) at (10,0) {0;};
\draw[->,thick] (O) -- (A) node [midway,above] {};
\draw[->,thick] (A) -- (B) node [midway,above] {};
\draw[->,thick] (B) -- (C) node [midway,above] {};
\draw[->,thick] (C) -- (D) node [midway,above] {};
\draw[->,thick] (D)--(E) node [midway,above] {};
\end{tikzpicture}
\end{equation}
\begin{equation}
H^3(\gr{S}_4) \cong H^2(\mathbb{P}^1) \cong \co;
\end{equation}
\begin{equation}
H^i(\gr{S}_4)=0  \text{ for all } i=4\ldots 8.
\end{equation}
As the central map of \eqref{eul2} is the cup product with the Euler class of the bundle $A$, which is nontrivial, therefore it is non-zero and we have $H^1(\gr{S}_4)=H^2(\gr{S}_4)=0$. 
By Poincaré duality, the $E-$polynomial of $\gr{S}_4$ is
$$ E(\gr{S}_4)=u^4v^4-u^2v^2$$
By corollary \ref{core} this completes the proof of proposition \ref{es4}.
\end{proof}
\subsection{Unstable case}
Consider the locus $\mathcal{U}$ of stable Higgs bundles $(V,\Phi)$ where $V$ is an unstable vector bundle with trivial determinant. Then there exists a line bundle $L$ of degree $d>0$ that fits an exact sequence
\[
\xymatrix{ 
0\ar[r] &L\ar[r] &V\ar[r] & L^{-1}\ar[r] &0}
\]

If $d>1$ then the bundle $L^{-2}K_C$ has no non-zero global section because it has negative degree, hence $L$ is $\Phi$-invariant for any Higgs field $\Phi\in H^0( End_0(V)\otimes K_C)$.
The only case we have to check is $\deg(L)=1$. Since the line bundle $L^{-2}K_C$ has degree 0, it has global sections if and only if it is trivial, i.e. is $L=K^{\frac{1}{2}}$ is one of the 16 roots of the canonical bundle $K_C$. 
As a consequence, if there exists an unstable vector bundle $V$ which is stable as a Higgs bundle, then it must be an extension of those bundles by their duals. 

\begin{prop}
The locus $\mathcal{U}$ of stable Higgs bundles $(V,\Phi)$ with $V$ unstable is isomorphic to 16 copies of $\co^3$, one for any root of the canonical bundle $K_C$. As a consequence its cohomology with compact support is given by
$$H^6_c(\mathcal{U})=\co^{16} \quad H^i_c(\mathcal{U})=0 \text{ otherwise.}$$
and the $E$-polynomial of $\mathcal{U}$ is $E(\mathcal{U})=16u^3v^3$.
\end{prop}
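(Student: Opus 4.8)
The plan is to fix one of the $16$ square roots $L$ of $K_C$ (so $\deg L=1$ and $L^2=K_C$) and prove that the stable Higgs bundles whose underlying unstable bundle is destabilised by $L$ form a locus isomorphic to $\co^3$. Since the maximal destabilising sub-line-bundle of an unstable $V$ is canonical, Higgs bundles attached to different roots can never be isomorphic, so $\mathcal{U}$ is the disjoint union of the $16$ resulting loci and it suffices to analyse one of them. By the discussion preceding the statement, any such $(V,\Phi)$ sits in an extension $0\to L\to V\to L^{-1}\to 0$ and the field has the form $\Phi=\left(\begin{smallmatrix}a&b\\c&-a\end{smallmatrix}\right)$ with $a\in H^0(K_C)$, $b\in H^0(L^2K_C)=H^0(K_C^2)\cong\co^3$ and $c\in H^0(L^{-2}K_C)=H^0(\mathcal{O})\cong\co$; stability forces the lower component $c$ to be nonzero, as otherwise $L$ would be an invariant sub-line-bundle of positive degree. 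Because $Ext^1(L^{-1},L)=H^1(L^2)=H^1(K_C)\cong\co$ is one-dimensional, $V$ is either the split bundle $L\oplus L^{-1}$ or the unique nontrivial extension $V_0$, and I would treat these two cases separately.

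First I would rule out the non-split bundle $V_0$ by repeating, essentially verbatim, the argument of Proposition \ref{type2}: tensoring the extension by $K_C$, applying $Hom(V_0,-)$ and passing to cohomology, the vanishing $H^1(L^2K_C)=H^1(K_C^2)=0$ shows that $H^1(Hom(V_0,LK_C))\cong H^1(K_C)$ is one-dimensional and that the connecting map $H^0(L^{-2}K_C)\to H^1(Hom(V_0,LK_C))$ is an isomorphism precisely because $V_0$ is a nontrivial extension. Hence the composite $H^0(End_0(V_0)\otimes K_C)\to H^0(L^{-2}K_C)$ vanishes, so $c=0$ is forced and $L$ is always $\Phi$-invariant: no stable Higgs field has underlying bundle $V_0$.

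It then remains to treat the split bundle $V=L\oplus L^{-1}$, where the stable Higgs fields are exactly those with $c\neq0$ (one checks that the only sub-line-bundle of non-negative degree is $L$ itself, so $c\neq0$ already guarantees stability). Two such fields define isomorphic Higgs bundles if and only if they differ by the action of $Aut(V)\cong\co^*\ltimes H^0(K_C)$, which acts by $(a,b,c)\mapsto(a,s^2b,s^{-2}c)$ for the torus and by $(a,b,c)\mapsto(a+\beta c,\,b-2a\beta-c\beta^2,\,c)$ for $\beta\in H^0(K_C)$. I would then produce the explicit normal form: using the torus to set $c=1$ and the unipotent part to set $a=0$ sends each orbit to its single invariant $b'=cb+a^2\in H^0(K_C^2)\cong\co^3$, yielding an isomorphism of the locus onto $\mathbb{A}^3=\co^3$. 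This gives $\mathcal{U}\cong\bigsqcup_{16}\co^3$, whence $H^6_c(\mathcal{U})=\co^{16}$ is pure of type $(3,3)$, all other compactly supported groups vanish, and $E(\mathcal{U})=16u^3v^3$.

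The only step carrying real content is the exclusion of the non-split extension $V_0$; the rest is bookkeeping. A useful consistency check is dimensional: were $V_0$ to admit a field with $c\neq0$, the associated orbit space would have dimension $\dim H^0(End_0(V_0)\otimes K_C)-\dim Aut(V_0)=6-2=4>3$, which already signals that the connecting map above must be an isomorphism. A secondary point to verify is that the orbit space of the split stratum genuinely computes a locally closed subvariety of $\mdol^{SL}$; this is automatic on the stable locus, and the explicit invariant $b'$ promotes the resulting bijection to an isomorphism of varieties.
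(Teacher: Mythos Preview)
Your proposal is correct and follows essentially the same route as the paper: split into the trivial and nontrivial extension cases, rule out the nontrivial extension by showing the connecting map $H^0(L^{-2}K_C)\to H^1(\mathcal{H}om(V_0,LK_C))\cong H^1(K_C)$ is an isomorphism, and for the split bundle quotient the locus $\{c\neq0\}$ by $Aut(V)\cong\co^*\ltimes H^0(K_C)$ to obtain $\co^3$. Your identification $Ext^1(L^{-1},L)=H^1(K_C)\cong\co$ (hence a \emph{unique} nontrivial extension) is in fact sharper than the paper's $\mathbb{P}(H^1(L^{-2}))=\mathbb{P}^2$, and your explicit invariant $cb+a^2=-\det\Phi$ packages the two-step quotient more cleanly; the only quibble is that in your heuristic dimension check the number $6$ should be $5$ for the nontrivial extension (precisely because the connecting map is an isomorphism), though this does not affect the argument.
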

\begin{proof}

\gr{Trivial case}
If $
V=K^{\frac{1}{2}}\oplus K^{-\frac{1}{2}}$ then 
$$ H^0(End_0(V)\otimes K_C)\cong H^0(K_C)\oplus H^0(K_C^2)\oplus H^0(\mathcal{O}) \cong \co^2\oplus \co^3\oplus \co $$
Thus the generic Higgs field will be of the form
$$\Phi=\left(\begin{array}{cc}
a&b\\
c&-a\\
\end{array}\right) \text{ with } a\in H^0(K_C), b \in H^0(K^2_C), c\in H^0(\mathcal{O}).$$
Two Higgs fields define isomorphic Higgs bundles if and only if they are conjugate by an automorphism of the bundle, which will lie in $\co^*\times (H^0(K_C),+)\subset \mathrm{SL}(2,\co)$. The action of $\co^*$ on the Higgs field is precisely the one seen in the type $\mathrm{(i)}$ case. Therefore isomorphism classes of stable Higgs bundles are parametrized by the disjoint union of 16 copies of 
$$ H^0(K_C)\times \frac{(H^0(\mathcal{O})-\{0\})\times H^0(K^2_C)}{\co^*}\cong  H^0(K_C)\times H^0(K^2_C)\cong \co^5.$$
Consider the action of the automorphism group $(\co^2,+)$: any element $\zeta \in H^0(K_C)=\co^2$ acts on $\Phi$ as
$$\left(\begin{array}{cc}1 &\zeta \\
0 &1\\
\end{array}\right) \left(\begin{array}{cc}
a &b\\
c &-a\\
\end{array}\right) \left(\begin{array}{cc}
1 &-\zeta \\
0 &1\\
\end{array}\right) =\left(\begin{array}{cc}
a- \zeta c&b+2\zeta a -\zeta^2 c\\
c& -a+\zeta c
\end{array}\right).$$
Such an action is linear and free on $a\in H^0(K_C)$ and whenever we fix $a-\zeta c$ then the value of $b+2\zeta a-\zeta^2 c $ is fixed as well. Therefore the quotient of $H^0(K_C)\times H^0(K_C^2)$ by $(\co^2,+)$ is $\co^3$. 

\gr{Non trivial case}

Non-trivial extensions of $K^{\frac{1}{2}}$ by $K^{-\frac{1}{2}}$ are parametrized by $\mathbb{P}(H^1(K^{-1}))=\mathbb{P}^2$ and fit in the exact sequence
$$ 0 \rightarrow K^{\frac{1}{2}}\rightarrow V\rightarrow K^{-\frac{1}{2}}\rightarrow 0.$$
Tensor by $K_C$ and apply the functor $Hom(V,-)$ restricted to traceless endomorphisms. Taking global sections one obtains
%$$ 0\rightarrow H^0(Hom(V,K^{\frac{1}{2}}K_C))\rightarrow H^0(End_0(V)\otimes K_C)\rightarrow H^0(Hom(V,K^{\frac{1}{2}})) \rightarrow H^1(Hom(V,K^{\frac{1}{2}}K_C))\rightarrow\cdots$$
\begin{equation*}
\begin{tikzpicture}[baseline, descr/.style={fill=white,inner sep=1.5pt}]
\matrix (m) [
matrix of math nodes,
row sep=1em,
column sep=2.5em,
text height=1.5ex, text depth=0.25ex
]
{ 0 & H^0(Hom(V,K^{\frac{1}{2}}K_C))& H^0(End_0(V)\otimes K_C) &H^0(Hom(K_C^{\frac{1}{2}},K_C^{\frac{1}{2}})) &\\
	& H^1(Hom(V,K^{\frac{1}{2}}K_C)) & H^1(End_0(V)\otimes K_C) & H^1(Hom(K_C^{\frac{1}{2}},K_C^{\frac{1}{2}})) &0. \\
};

\path[overlay,->, font=\scriptsize,>=latex]
(m-1-1) edge (m-1-2)
(m-1-2) edge (m-1-3)
(m-1-3) edge node[above]{$\rho$}(m-1-4)
(m-1-4) edge[out=355,in=175] node[descr,yshift=0.3ex] {$ext$} (m-2-2)
(m-2-2) edge (m-2-3)
(m-2-3) edge (m-2-4)
(m-2-4) edge (m-2-5);
\end{tikzpicture}
\end{equation*}

Again, a Higgs bundle with $V$ as underlying vector bundle is stable if and only if its Higgs field lies in the complement of the kernel of $\rho$. We shall prove that there are no stable bundles of this type, i.e. that $\rho=0$. \\
Since $H^0(Hom(K_C^{\frac{1}{2}},K_C^{\frac{1}{2}}))\cong \co$ this is equivalent to ask that the map $ext$ is non zero, which is the case as we are considering nontrivial extensions. 
%$H^1(Hom(V,K_C^{\frac{1}{2}}))\cong H^1(\mathcal{O})\cong\co^2$. Applying the functor $Hom(,-LK_C)$ and taking global sections the long exact sequence in cohomology splits in 
%$$0\rightarrow Hom(L^{-1},LK_C)\rightarrow Hom(V,LK_C)\rightarrow Hom(L,LK_C)\rightarrow 0=H^1(L^2K_C)$$
%$$ 0\rightarrow H^1(V^*\otimes LK_C)\rightarrow H^1(K_C)\rightarrow 0.$$
%From that one deduces that $H^1(V^*\otimes LK_C)\cong H^1(K_C)\cong \co$; also $Hom(L^{-1},LK_C)\cong H^0(L^2K_C)\cong \co^3$ and $Hom(L,LK_C)\cong H^0(K_C)\cong \co^2$ thus $Hom(V,LK_C)\cong\co^5$.
%Coming back to the first long exact sequence one has 
%$$0\rightarrow \co^5\rightarrow H^0(\mathcal{E}nd_0(V)\otimes K_C )\rightarrow \co \rightarrow \co\rightarrow H^1(\mathcal{E}nd_0(V)\otimes K_C)\rightarrow \co^2\rightarrow 0.$$
%As the extension is nontrivial, the map $\co\rightarrow \co$ is an isomorphism, the map $ H^0(End_0(V)\otimes K_C)\rightarrow \co\cong H^0(V^*\otimes L^{-1}K_C)$ is zero and the destabilizing bundle is preserved by any Higgs field. 
As a result there are no non-trivial extensions of $K_C^{\frac{1}{2}}$ by its dual that give rise to a stable Higgs bundle.\\

\end{proof}

%\subsection{Computation of the $IE(\mdol)$}

Having $E-$polynomials of all strata in $\mdol^{\mathrm{SL},s}$ one can sum them to obtain the $E-$polynomial of $\mdol^{\mathrm{SL},s}$. 
The following table summarizes the Betti numbers of strata in the table locus of $\mdol^{\mathrm{SL}}$.
\vspace{0.5 cm}
\par
\noindent
\begin{center}
\begin{tabular}{|c|c|c|c|c|c|c|c|c|c|c|c|c|c|}
\hline
&$H^0_c $&$H^1_c$ & $H^2_c$&$H^3_c$ & $H^4_c$& $H^5_c$& $H^6_c$& $H^7_c$&
\ $H^8_c$&$H^9_c$ &$H^{10}_c$ & $H^{11}_c$&$H^{12}_c$\\ \hline
\gr{$T^*\mathcal{S}$}&0&0&0&0&0&0&0&0&0&5&0&0&1\\ \hline
\gr{$\mathcal{S}_1$}&0&0&0&0&0&0&15&21&5&0&1&0&0\\\hline
\gr{$\mathcal{S}_3$}&0&0&0&0&0&16&16&0&0&0&0&0&0\\\hline
\gr{$\mathcal{S}_4$}&0&0&0&0&0&16&0&0&16&0&0&0&0\\\hline	
\gr{$\mathcal{U}$}&0&0&0&0&0&0&16&0&0&0&0&0&0\\ \hline
\end{tabular}
\end{center}
\vspace{0.5 cm}

\begin{proof}[Proof of proposition \ref{emdols}]
Observe that 
$$ \mdol^{\mathrm{SL},s}= T^*\mathcal{S}\sqcup \mathcal{S}_1\sqcup \mathcal{S}_3\sqcup \mathcal{S}_4\sqcup \mathcal{U}.$$ 
Summing $E-$polynomials of all strata we get 
$$E(\mdol^{\mathrm{SL},s})=u^6v^6+u^5v^5+16u^4v^4+13u^3v^3-u^2v^4-u^4v^2-17u^2v^2.$$
\end{proof}

\subsection{Cohomology of $\tsigma^{\mathrm{SL}}\setminus \tomega^{\mathrm{SL}}$ and $\tomega^{\mathrm{SL}}$}

\subsubsection{Cohomology of $\tomega^{\mathrm{SL}}$}
\begin{lem}
$$ E(\tomega)(u,v)=16u^3v^3+16u^2v^2+16uv+16$$
\end{lem}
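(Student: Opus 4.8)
The plan is to reduce the computation to the cohomology of a single fibre and then to invoke the classical cohomology of a smooth quadric threefold.

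First I would use the global description of the desingularization. By Proposition \ref{fibres}, item (3), and the stratification of $\mdol^{SL}$, the set $\Omega^{SL}$ consists of exactly $16$ isolated points (the square roots of the trivial bundle), and over each of them the fibre of $\tilde{\pi}^{SL}$ is a copy of the Lagrangian Grassmannian $\mathcal{G}$ of $\omega$-isotropic $2$-planes in the symplectic vector space $(Ext^1_H(L,L),\omega)$ of (\ref{omega}). Since the base points are disjoint, $\tomega^{SL}=(\tilde{\pi}^{SL})^{-1}(\Omega^{SL})$ is a disjoint union of $16$ copies of $\mathcal{G}$. Applying the additivity of the $E$-polynomial repeatedly then gives
$$E(\tomega^{SL})=16\,E(\mathcal{G}),$$
so everything reduces to proving $E(\mathcal{G})=1+uv+u^2v^2+u^3v^3$.

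Next I would compute $E(\mathcal{G})$. Recall from the local model that $\mathcal{G}$ is a smooth irreducible $3$-dimensional quadric in $\mathbb{P}^4$; equivalently it is the Lagrangian Grassmannian $LG(2,4)$, a smooth projective rational homogeneous space. Being smooth and projective, its mixed Hodge structure is pure, so it suffices to read off its Hodge numbers. The cleanest route is the Bruhat (algebraic cell) decomposition: as a homogeneous space $\mathcal{G}$ decomposes into affine cells of complex dimensions $0,1,2,3$, exactly one in each dimension. Hence $H^{2k}(\mathcal{G})\cong\mathbb{Q}$ for $k=0,1,2,3$, all odd cohomology vanishes, and every class is algebraic, i.e. of Hodge type $(k,k)$. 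Equivalently, one obtains the Betti numbers $b_0=b_2=b_4=b_6=1$ and $b_{\mathrm{odd}}=0$ from the Lefschetz hyperplane theorem applied to $\mathcal{G}\subset\mathbb{P}^4$ (controlling $H^{<3}$), Poincaré duality (controlling $H^{>3}$), and the vanishing $H^3(\mathcal{G})=0$ of the quadric threefold.

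Feeding the Hodge type $(k,k)$ of $H^{2k}(\mathcal{G})$ into the definition of the $E$-polynomial yields
$$E(\mathcal{G})(u,v)=\sum_{k=0}^{3}(uv)^k=1+uv+u^2v^2+u^3v^3,$$
and multiplying by $16$ gives $16u^3v^3+16u^2v^2+16uv+16$, as claimed. The computation is essentially routine; the only point needing a little care is the purity and Hodge--Tate $(k,k)$ nature of the cohomology, which here is guaranteed by smoothness and projectivity together with the cellular (hence algebraic) structure of the quadric threefold, so there is no genuine obstacle.
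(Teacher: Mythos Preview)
Your proof is correct and follows essentially the same approach as the paper: both identify $\tomega^{SL}$ as $16$ disjoint copies of the smooth quadric threefold $\mathcal{G}\subset\mathbb{P}^4$ and read off its cohomology. You supply more justification for the Betti numbers and the Hodge--Tate type of $H^*(\mathcal{G})$ than the paper, which simply asserts them, but the argument is the same.
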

\begin{proof}
Recall that $\tomega$ consists of 16 copies of a nonsingular hypersurface  $\mathcal{G}$ in $\mathbb{P}^4$. Therefore its cohomology is given by
\begin{align*}
H^0(\tomega)&=H^2(\tomega)=H^4(\tomega)=H^6(\tomega)=\co^{16}, \\
H^1(\tomega)&=H^3(\tomega)=H^5(\tomega)=0.
\end{align*}
The $E$-polynomial of $\tomega$ is 
$$ E(\tomega)(u,v)=16u^3v^3+16u^2v^2+16uv+16$$
\end{proof}
\subsubsection{Cohomology of $\tsigma^{\mathrm{SL}}\setminus \tomega^{\mathrm{SL}}$}
\begin{lem}
$$E(\tsigma^{\mathrm{SL}}\setminus \tomega^{\mathrm{SL}})(u,v)=u^5v^5+5u^4v^4+u^5v^3+u^3v^5+5u^3v^3+u^2v^4+u^4v^2+u^2v^2-16uv-16$$
\end{lem}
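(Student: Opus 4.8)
The plan is to exploit the fibration structure of $\tsigma^{SL}\setminus\tomega^{SL}$ over the base stratum. By Proposition~\ref{fibres}, $\tsigma^{SL}\setminus\tomega^{SL}=(\tilde{\pi}^{SL})^{-1}(\Sigma^{SL}\setminus\Omega^{SL})$ is a fibration over $\Sigma^{SL}\setminus\Omega^{SL}$ whose fibres are all isomorphic to $\mathbb{P}^1$. Since every automorphism of $\mathbb{P}^1$ acts trivially on $H^*(\mathbb{P}^1)$, the sheaves $R^0\tilde{\pi}^{SL}_*\q=\q$ and $R^2\tilde{\pi}^{SL}_*\q=\q(-1)$ are constant and $R^1\tilde{\pi}^{SL}_*\q=0$. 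The Hodge--Deligne polynomial is additive along the terms of the (compactly supported) Leray spectral sequence of mixed Hodge structures, because the differentials pair terms of adjacent total degree carrying opposite Euler-characteristic signs; hence the possible higher differentials do not affect $E$, and I obtain the multiplicativity $E(\tsigma^{SL}\setminus\tomega^{SL})=(1+uv)\,E(\Sigma^{SL}\setminus\Omega^{SL})$, the factor $uv$ coming from the Tate twist $\q(-1)$ of $H^2(\mathbb{P}^1)$. Everything thus reduces to computing $E(\Sigma^{SL}\setminus\Omega^{SL})$.

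Next I would compute $E(\Sigma^{SL}\setminus\Omega^{SL})$. As recorded in the proof of Proposition~\ref{semismallness}, $\Sigma^{SL}\setminus\Omega^{SL}=\left[(Jac(C)\times H^0(K_C))\setminus\{16\text{ points}\}\right]/\mathbb{Z}_2$, the involution being $(L,\phi)\mapsto(L^{-1},-\phi)$ and the $16$ removed points being precisely its fixed locus $\{(L,0)\mid L^2=\mathcal{O}\}$. Since these $16$ points descend to $16$ distinct points of the quotient, additivity of $E$ gives $E(\Sigma^{SL}\setminus\Omega^{SL})=E\big((Jac(C)\times H^0(K_C))/\mathbb{Z}_2\big)-16$. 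To handle the finite quotient I would use the isomorphism of mixed Hodge structures $H^*_c(X/\mathbb{Z}_2;\q)\cong H^*_c(X;\q)^{\mathbb{Z}_2}$ together with the Künneth decomposition $H^*_c(Jac(C)\times H^0(K_C))\cong H^*(Jac(C))\otimes H^*_c(H^0(K_C))$.

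The crux is the equivariance: the involution acts trivially on $H^*_c(H^0(K_C))\cong H^*_c(\co^2)$, because $-\mathrm{id}\in GL(2,\co)$ and $GL(2,\co)$ is connected, so it acts trivially on cohomology. Only $H^4_c(\co^2)\cong\q(-2)$ survives, contributing a factor $u^2v^2$, and the invariants become $(H^*(Jac(C)))^{\mathbb{Z}_2}\otimes H^4_c(\co^2)$. Now $L\mapsto L^{-1}$ acts by $-1$ on $H^1(Jac(C))$ and hence by $(-1)^k$ on $H^k$, so $(H^*(Jac(C)))^{\mathbb{Z}_2}$ is the even part, i.e. exactly the cohomology of the Kummer surface $\mathcal{K}$ with $E(\mathcal{K})=1+u^2+4uv+v^2+u^2v^2$ as computed earlier. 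This yields $E\big((Jac(C)\times H^0(K_C))/\mathbb{Z}_2\big)=u^2v^2\,E(\mathcal{K})=u^4v^4+u^4v^2+u^2v^4+4u^3v^3+u^2v^2$, so that $E(\Sigma^{SL}\setminus\Omega^{SL})=u^4v^4+u^4v^2+u^2v^4+4u^3v^3+u^2v^2-16$; multiplying by $(1+uv)$ gives the asserted polynomial. I expect the main obstacle to be precisely this equivariance bookkeeping — verifying that the $\mathbb{Z}_2$-action touches only the Jacobian factor and that all Tate twists and weights are carried correctly through the Künneth and quotient isomorphisms — whereas the multiplicativity over the $\mathbb{P}^1$-fibration is routine once the triviality of the monodromy on $H^*(\mathbb{P}^1)$ is observed.
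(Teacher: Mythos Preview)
Your proposal is correct and follows essentially the same approach as the paper: both compute $E(\Sigma^{SL}\setminus\Omega^{SL})=u^4v^4+u^4v^2+u^2v^4+4u^3v^3+u^2v^2-16$ by identifying the $\mathbb{Z}_2$-invariant compactly supported cohomology of $Jac(C)\times H^0(K_C)$ with that of the Kummer surface shifted by $u^2v^2$, subtract the $16$ fixed points, and then multiply by $E(\mathbb{P}^1)=1+uv$ using the $\mathbb{P}^1$-fibration. Your write-up simply makes the justifications (trivial monodromy on $H^*(\mathbb{P}^1)$, triviality of the involution on $H^*_c(\co^2)$) more explicit than the paper does.
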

\begin{proof}
Observe that $\tsigma^{\mathrm{SL}}\setminus \tomega^{\mathrm{SL}}$ is $\mathbb{P}^1$-bundle over $\Sigma^{\mathrm{SL}}\setminus \Omega^{\mathrm{SL}}$.
Now $$\Sigma^{\mathrm{SL}}\setminus \Omega^{\mathrm{SL}} \cong \left( Jac(C)\times H^0(K_C)/\mathbb{Z}_2\right)\setminus \{16 \text{ points}\}.$$
First we notice that $\Sigma^{\mathrm{SL}}=(Jac(C)\times H^0(K_C)/\mathbb{Z}_2$ has the same cohomology as $\mathcal{K}$, so
$$\begin{array}{ll}
H_c^4(\Sigma^{\mathrm{SL}})\cong \co &\text{ of type }(2,2)\\
H_c^2(\Sigma^{\mathrm{SL}})\cong \co^6 &\text{ of types }4(3,3)+(2,4)+(4,2)\\
H_c^8(\Sigma^{\mathrm{SL}})\cong \co &\text{ of type }(4,4)\\
H_c^i(\Sigma^{\mathrm{SL}})=0& \text{ otherwise}
\end{array}$$
By proposition \ref{ap} $\Sigma^{\mathrm{SL}}\setminus \Omega^{\mathrm{SL}}$ has the same cohomology groups as $\Sigma^{\mathrm{SL}}$ except for $H^1_c(\Sigma^{\mathrm{SL}}\setminus \Omega^{SL})\cong \co^{16}$ of weight 0.
One has 
\begin{align*}
E(\tsigma^{\mathrm{SL}}\setminus \tomega^{\mathrm{SL}})(u,v)&=E(\mathbb{P}^1)E(\Sigma^{\mathrm{SL}}\setminus \Omega^{\mathrm{SL}})(u,v)=(uv+1)(u^4v^4+u^2v^4+u^4v^2+4u^3v^3+u^2v^2-16)\\
&=u^5v^5+5u^4v^4+u^5v^3+u^3v^5+5u^3v^3+u^2v^4+u^4v^2+u^2v^2-16uv-16
\end{align*}
\end{proof}
\subsection{Cohomology of $\tmdol^{\mathrm{SL}}$ and intersection cohomology $\mdol^{\mathrm{SL}}$}
Summing the contributions in equation \eqref{etmdol} one has the following result.
\begin{thm}
Let $\tmdol^{\mathrm{SL}}$ the semismall desingularization of $\mdol^{\mathrm{SL}}$. The $E-$polynomial of $\tmdol^{\mathrm{SL}}$ is 
$$E(\tmdol^{\mathrm{SL}})=u^6v^6+2u^5v^5+21u^4v^4+u^5v^3+u^3v^5+34u^3v^3.$$
Moreover, by lemma \ref{etob} we deduce
$$P_t(\tmdol^{\mathrm{SL}})=1+2t^2+23t^4+34t^6.$$
\end{thm}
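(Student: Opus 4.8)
The plan is to read the final $E$-polynomial straight off the additivity formula \eqref{etmdol}, namely
$$E(\tmdol^{SL})=E(\mdol^{SL,s})+E(\tsigma^{SL}\setminus\tomega^{SL})+E(\tomega^{SL}),$$
and then to convert the answer into Betti numbers using purity together with Lemma \ref{etob}. First I would substitute the three ingredients already computed: $E(\mdol^{SL,s})$ from Proposition \ref{emdols}, the $\mathbb{P}^1$-bundle contribution $E(\tsigma^{SL}\setminus\tomega^{SL})$, and the sixteen-quadric contribution $E(\tomega^{SL})$.

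Next I would add the three polynomials monomial by monomial. The only point requiring attention is the bookkeeping of the low-degree cancellations: the negative terms $-u^2v^4-u^4v^2-17u^2v^2$ coming from $E(\mdol^{SL,s})$ must cancel exactly against the $+u^2v^4+u^4v^2+u^2v^2-16uv-16$ from $E(\tsigma^{SL}\setminus\tomega^{SL})$ and the $+16u^2v^2+16uv+16$ from $E(\tomega^{SL})$. Tracking the coefficient of each $u^av^b$ one checks that the monomials $u^2v^4,\ u^4v^2,\ u^2v^2,\ uv$ and the constant all vanish, while the survivors are $u^6v^6$, $2u^5v^5$ (from $1+1$), $21u^4v^4$ (from $16+5$), $u^5v^3+u^3v^5$, and $34u^3v^3$ (from $13+5+16$), giving the asserted $E(\tmdol^{SL})$. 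To minimize the risk of an arithmetic slip I would lay the addition out as a single table of coefficients indexed by $(p,q)$ before recording the final polynomial.

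For the second statement I would invoke the purity theorem proved above: since $\tmdol^{SL}$ is a smooth variety whose Hodge structure is pure, each $H^k_c(\tmdol^{SL})$ is pure of weight $k$, so in the $E$-polynomial only the monomials with $p+q=k$ survive from $H^k_c$, and their signs $(-1)^k$ are all positive because every exponent sum $p+q$ occurring above is even. Being smooth, $IH^*=H^*$, so Lemma \ref{etob} applies verbatim with $2\dim\tmdol^{SL}=12$: reading the coefficients $\alpha^{p,q}$ off $E(\tmdol^{SL})$ and summing along the antidiagonals $p+q=12-k$ yields $b_0=\alpha^{6,6}=1$, $b_2=\alpha^{5,5}=2$, $b_4=\alpha^{4,4}+\alpha^{5,3}+\alpha^{3,5}=21+1+1=23$, and $b_6=\alpha^{3,3}=34$, with all odd Betti numbers and all remaining even ones equal to zero. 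This produces $P_t(\tmdol^{SL})=1+2t^2+23t^4+34t^6$.

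I do not expect any genuine obstacle here: the whole computation is the assembly of results established in the preceding subsections, and the two potential pitfalls are purely formal, namely the exact cancellation of the sub-leading monomials in the summation and the correct matching of $E$-polynomial coefficients to the antidiagonal sums dictated by Lemma \ref{etob}. The purity hypothesis of that lemma is the only external input, and it is exactly what the purity theorem supplies, so once the term-by-term addition is verified the conclusion is immediate.
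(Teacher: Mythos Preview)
Your proposal is correct and matches the paper's approach exactly: the theorem is obtained precisely by summing the three previously computed $E$-polynomials via the additivity formula \eqref{etmdol}, and then applying Lemma \ref{etob} using the purity established earlier. Your monomial-by-monomial bookkeeping is accurate, including the cancellations of the low-degree terms and the antidiagonal sums yielding the Betti numbers.
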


We are now ready to prove the main theorem of the section.
\begin{proof}[Proof of theorem \ref{ihsl}  ]
By \eqref{edt}, subtracting from $E(\tmdol^{\mathrm{SL}})$ the contributions $E^{top}$ coming from top cohomology of the fibres gives the intersection $E-$polynomial of $\mdol$: 
$$IE(\mdol^{\mathrm{SL}})=u^6v^6+u^5v^5+17u^4v^4+17u^3v^3.$$
The intersection Betti numbers are obtained again by applying lemma \ref{etob}.
\end{proof}
%=======================================
%      			CASO DI GL(2,C)
%=======================================

\section{Intersection cohomology of $\mdol^{\mathrm{GL}}$}

The methods in section 6 can be applied to compute the intersection Betti numbers of the moduli space $\mdol^{\mathrm{GL}}$ of Higgs bundles of rank 2, degree 0 over a curve of genus 2.

Again, the proof of proposition \eqref{semismallness} shows that all the strata of the map $\tilde{\pi^{\mathrm{GL}}}:\tmdol^{\mathrm{GL}}\rightarrow \mdol^{\mathrm{GL}}$ are relevant and that
$$\mdol^{\mathrm{GL},s}=\mathcal{M}^{\mathrm{GL}}_{Dol,0}\quad \Sigma^{\mathrm{GL}}=\mathcal{M}^{\mathrm{GL}}_{Dol,1} \quad \Omega^{\mathrm{GL}}=\mathcal{M}^{\mathrm{GL}}_{Dol,3}$$
with the same fibres over the strata as in the $\mathrm{SL}(2,\co)$.
We stratify $\tmdol^{\mathrm{GL}}$ as follows
$$\tmdol^{\mathrm{GL}}=(\tilde{\pi}^{\mathrm{GL}})^{-1}(\mdol^{\mathrm{GL},s})\sqcup (\tsigma^{\mathrm{GL}}\setminus \tomega^{\mathrm{GL}})\sqcup \tomega^{\mathrm{GL}}.$$

Observe that we as the fibres of $\tilde{\pi}$ over both $\Omega^{\mathrm{GL}}$ and $\Sigma^{\mathrm{GL}}\setminus\Omega^{\mathrm{GL}}$ are irreducible, then the monodromy of the local system is trivial. Moreover since $\Omega^{\mathrm{GL}}$ is nonsingular and $\Sigma^{\mathrm{GL}}$ have finite quotient singularities we have  
$$IC_{\mdol^{\mathrm{GL}}}(\mathcal{L}_{\mdol^{\mathrm{GL}}})_{\mid\mdol^{\mathrm{GL},s}}=\mathbb{Q}[10]\quad IC_{\Sigma}(\mathcal{L}_{\Sigma})_{\mid\Sigma^0}\cong \mathbb{Q}[8](-1)\quad
IC_{\Omega}(\mathcal{L}_{\Omega})_{\mid pt}\cong\mathbb{Q}[4](-3)$$
where the shifts $(-1)$ and $(-3)$ correspond to the Hodge structures $\mathbb{Q}(-1)$ of respectively $H^2(\mathbb{P}^1)$ and $H^6(\mathcal{G})$.\\
Taking hypercohomology with compact support in \eqref{dtg}, we obtain the intersection cohomology groups and the splitting in the decomposition theorem becomes
$$ H_c^*(\tmdol^{\mathrm{GL}})=IH_c^*(\mdol) \oplus H_c^{*-2}(\Sigma^{\mathrm{GL}}, IC_{\Sigma}(\mathcal{L}_{\Sigma}) )\oplus H_c^{*-6}(\Omega^{\mathrm{GL}},IC_{\Omega}(\mathcal{L}_{\Omega}))$$
We compute the intersection $E-$polynomial and use lemma \ref{etob} to obtain intersection Betti numbers. 

\begin{thm}[\gr{Intersection cohomology of $\mdol^{\mathrm{GL}}$}]\label{ihgl} 
	The intersection Poincaré polynomial of $\mdol^{\mathrm{GL}}$ is 
	$$ IP_t(\mdol^{\mathrm{GL}})=1+4t+7t^2+8t^3+9t^4+12t^5+15t^6+16t^7+14t^8+8t^9+2t^{10}.$$
	Moreover the Hodge diamond is 
	$$\begin{tabular}{l|ccc}
	0&&(0,0)&\\
	1&2(1,0)&&2(0,1)\\
	2&(2,0)&5(1,1)&(0,2)\\
    3&4(2,1)&&4(1,2)\\
	4&(1,3)&7(2,2)&(3,1)\\
	5&6(3,2)&&6(2,3)\\
	6&2(4,2)&11(3,3)&2(2,4)\\
	7&8(4,3)&&8(3,4)\\
	8&2(5,3)&10(2,2)&2(3,5)\\
	9&4(5,4)&&4(4,5)\\
	10&&2(5,5)&.\\
	\end{tabular}
	$$
	%$$IE(\mdol)(u,v)=u^6v^6+u^5v^5+15u^4v^4+u^5v^3+u^3v^5+17u^3v^3.$$
\end{thm}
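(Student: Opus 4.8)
The plan is to carry out for $G=GL(2,\co)$ the same program that yields Theorem \ref{ihsl}, i.e. to compute the intersection $E$-polynomial $IE(\mdol^{GL})$ and then extract the Hodge diamond from it. The backbone is again the pair of identities from \eqref{etmdol} and \eqref{edt}: one first assembles $E(\tmdol^{GL})=E(\mdol^{GL,s})+E(\tsigma^{GL}\setminus\tomega^{GL})+E(\tomega^{GL})$, and then obtains $IE(\mdol^{GL})=E(\tmdol^{GL})-E^{top}(\tsigma^{GL})-E^{top}(\tomega^{GL})$ by stripping off the top-cohomology contributions of the exceptional fibres. The conceptual point I would emphasize is that, thanks to the purity proved in Section 5 together with Poincar\'e--Verdier duality, each $IH^i_c(\mdol^{GL})$ is pure of weight $i$; hence the coefficient of $u^pv^q$ in $IE(\mdol^{GL})$ equals $(-1)^{p+q}ih_c^{p+q,p,q}$, and dualizing ($ih_c^{k,p,q}=ih^{20-k,\,10-p,\,10-q}$, with $\dim_{\co}\mdol^{GL}=10$) recovers every $ih^{k,p,q}$. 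Thus the $E$-polynomial alone pins down the entire Hodge diamond, and Lemma \ref{etob} then gives the Poincar\'e polynomial; no further argument beyond the $E$-polynomial computation is needed.

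It remains to produce the three geometric inputs. The term $E(\tomega^{GL})$ is identical to the $SL$ case: by Proposition \ref{fibres}, $\tomega^{GL}$ is $16$ copies of the smooth quadric threefold $\mathcal{G}\subset\mathbb{P}^4$, and the associated top-cohomology local system is the constant sheaf twisted by $\q(-3)$. For $E(\tsigma^{GL}\setminus\tomega^{GL})$ I would use that this is a $\mathbb{P}^1$-bundle over $\Sigma^{GL}\setminus\Omega^{GL}$, so by multiplicativity of $E$-polynomials it suffices to compute $E(\Sigma^{GL}\setminus\Omega^{GL})$, where now $\Sigma^{GL}=Sym^2(Jac(C)\times H^0(K_C))$ and $\Omega^{GL}$ is the diagonal copy of $Jac(C)\times H^0(K_C)$. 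The $E$-polynomial of this symmetric square is the $\mathbb{Z}_2$-invariant part of $E$ of $(Jac(C)\times\co^2)^{\times 2}$, which I would read off from the K\"unneth decomposition of $H^*(Jac(C))$ under the swap involution, remembering that the odd classes of the two factors anticommute so that the invariants are governed by the symmetric and exterior squares of $H^*(Jac(C))$. This is where the genuinely four-dimensional abelian part enters and produces the odd rows of the stated diamond, which were absent in the $SL$ case.

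For the stable locus $E(\mdol^{GL,s})$ there are two natural routes. The first is to repeat the $SL$ stratification verbatim, decomposing $\mdol^{GL,s}$ according to whether the underlying rank-$2$ degree-$0$ bundle $V$ is stable, strictly semistable or unstable, and for each piece describing the stabilizing Higgs fields modulo the automorphism group; these are the $GL$-analogues of the strata $T^*\mathcal{S}$, $\mathcal{S}_1$, $\mathcal{S}_3$, $\mathcal{S}_4$, $\mathcal{U}$, now carrying the extra determinant and trace directions. The second, more structural, route is to exploit the determinant--trace fibration $\mdol^{GL}\to T^*Pic^0(C)$ together with the tensoring morphism $T^*Jac(C)\times\mdol^{SL}\to\mdol^{GL}$, $((L,\phi),(W,\Psi))\mapsto(L\otimes W,\ \phi\cdot\mathrm{id}+\Psi)$. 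This morphism is a free \'etale Galois cover with group $\Gamma=Pic^0(C)[2]\cong(\mathbb{Z}_2)^4$ acting by simultaneous tensoring by $2$-torsion, so intersection cohomology satisfies $IH^*_c(\mdol^{GL})=\big(H^*_c(T^*Jac(C))\otimes IH^*_c(\mdol^{SL})\big)^{\Gamma}$. Since translations by torsion act trivially on $H^*(Jac(C))$, this reduces $IE(\mdol^{GL})$ to $E(T^*Jac(C))=(uv)^2(1-u)^2(1-v)^2$ times the $\Gamma$-invariant part of the already-known $IE(\mdol^{SL})$ of Theorem \ref{ihsl}, and gives a strong independent check on the direct computation.

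The main obstacle, in either route, is the equivariant Hodge bookkeeping on the abelian factor: one must follow the action of the relevant finite group ($\mathbb{Z}_2$ on the symmetric square $\Sigma^{GL}$, respectively $\Gamma$ on the tensor cover) on the cohomology of $Jac(C)$ \emph{together with its mixed Hodge structure}, since it is exactly the distinction between invariant and non-invariant abelian classes that decides which Hodge types survive into the odd rows $(1,0),(2,1),(3,2),\dots$ of the diamond. Tracking individual Hodge types --- rather than mere Betti numbers --- through every Gysin and additivity sequence, and making sure the total assembles into an $IE(\mdol^{GL})$ consistent with the $SL$ check above, is the delicate part; once this is done, purity and Lemma \ref{etob} deliver the Poincar\'e polynomial and the Hodge diamond of the statement.
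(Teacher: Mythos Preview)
Your overall architecture --- compute $E(\tmdol^{GL})$ from the three pieces in \eqref{etmdol} and then subtract the top-cohomology terms as in \eqref{edt} --- is exactly what the paper does, and your remarks about purity and Lemma~\ref{etob} giving the full Hodge diamond are correct. But there is a genuine slip that would derail the computation: you assert that ``$E(\tomega^{GL})$ is identical to the $SL$ case'' and that $\tomega^{GL}$ is $16$ copies of $\mathcal{G}$. This is false, and in fact contradicts what you yourself write two sentences later, where you correctly identify $\Omega^{GL}$ as the diagonal $Jac(C)\times H^0(K_C)$ inside the symmetric square. In the $GL$ case $\Omega^{GL}$ is a $4$-dimensional variety, not $16$ points, so $\tomega^{GL}$ is a $\mathcal{G}$-bundle over $Jac(C)\times\co^2$ and $E(\tomega^{GL})=(1+uv+u^2v^2+u^3v^3)\,E(Jac(C))\,u^2v^2$. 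Likewise $E^{top}(\tomega^{GL})$ is $u^3v^3\cdot E(\Omega^{GL})$, not $16u^3v^3$. Using the $SL$ value here would corrupt both $E(\tmdol^{GL})$ and the subtraction step, and the resulting $IE$-polynomial would not match either the stated Poincar\'e polynomial or your own tensor-cover cross-check.

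Your second route via the \'etale $\Gamma=(\mathbb{Z}_2)^4$-cover $T^*Jac(C)\times\mdol^{SL}\to\mdol^{GL}$ is not in the paper (which redoes the entire stratification of $\mdol^{GL,s}$ from scratch, strata $T^*\mathcal{N}^s,\mathcal{N}_1,\mathcal{N}_3,\mathcal{N}_4,\mathcal{NU}$), and it is a genuinely nicer shortcut: it recycles Theorem~\ref{ihsl} and reduces everything to identifying the $\Gamma$-invariants in $IH^*(\mdol^{SL})$. Be aware, though, that this last step is not free: $\Gamma$ permutes the $16$ points of $\Omega^{SL}$ simply transitively, so only one of the $16$ classes contributing to $ib_4$ and $ib_6$ in Theorem~\ref{ihsl} survives, and you must track this carefully to land on the correct $IE(\mdol^{GL})$. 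The paper's direct approach avoids this representation-theoretic bookkeeping at the cost of repeating all the strata computations; your approach trades that repetition for a single equivariant calculation.
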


Observe that 
\begin{equation}\label{dece}
E(\tmdol^{\mathrm{GL}})=E(\mdol^{\mathrm{GL},s})+E(\tsigma^{\mathrm{GL}}\setminus \tomega^{\mathrm{GL}}) + E(\tomega^{\mathrm{GL}})
\end{equation}
and that by \eqref{edt} and \eqref{etmdol}
$$IE(\mdol^{\mathrm{GL}})=E(\tmdol^{\mathrm{GL}})-E(\Sigma^{\mathrm{GL}}\times H^2(\mathbb{P}^1))-E(\Omega^{\mathrm{GL}}\times H^6(\mathcal{G}));$$

The rest of the section is devoted to computing each term of the equations above.

%=======================================
% 	COMPUTO DELLA COOMOLOGIA DELLO SPAZIO DEI MODULI
%=======================================
\subsection{Cohomology of $\mdol^{\mathrm{GL},s}$}

In this section we shall compute the $E-$polynomial of the locus $\mdol^{\mathrm{GL},s}$ of stable pairs $(V,\Phi)$.
\begin{prop}\label{emdolsgl}
Let $\mdol^{\mathrm{GL},s}$ be the locus of stable Higgs pairs on $C$. The $ E-$polynomial of $\mdol^{\mathrm{GL},s}$ is
\begin{align*}
E(\mdol^{GL,s})&=-2 u^4 v^4+4 u^5 v^4-3 u^6 v^4+2 u^7 v^4-u^8 v^4+4 u^4 v^5-10 u^5 v^5+10 u^6 v^5-6 u^7 v^5+2 u^8 v^5+\\
&-3 u^4 v^6+10 u^5 v^6-11 u^6 v^6+4 u^7 v^6+2 u^4 v^7-6 u^5 v^7+4 u^6 v^7+3 u^7 v^7-4 u^8 v^7+u^9 v^7+\\
&-u^4 v^8+2 u^5 v^8-4 u^7 v^8+6 u^8 v^8-4 u^9 v^8+u^10 v^8+u^7 v^9-4 u^8 v^9+5 u^9 v^9-2 u^{10} v^9+u^8 v^{10}+\\
&-2 u^9 v^{10}+u^{10} v^{10}.
\end{align*}
\end{prop}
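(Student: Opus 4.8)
The plan is to avoid re-running the entire stratification of the previous section and instead to leverage the computation of $E(\mdol^{SL,s})$ already obtained in Proposition \ref{emdols}. The key geometric input is the standard tensorisation isomorphism relating the $SL$- and $GL$-moduli spaces. Writing $\Gamma:=Jac(C)[2]\cong(\mathbb{Z}/2\mathbb{Z})^4$ for the group of the $16$ two-torsion line bundles, the map
$$\big((W,\Psi),(L,\phi)\big)\longmapsto \big(W\otimes L,\ \Psi\otimes\mathrm{id}_L+\mathrm{id}_W\otimes\phi\big)$$
identifies $\mdol^{GL,s}$ with the quotient $\big(\mdol^{SL,s}\times Jac(C)\times H^0(K_C)\big)/\Gamma$, where $\Gamma$ acts diagonally: by tensoring $W\mapsto W\otimes\eta$ on the first factor, by the translation $L\mapsto L\otimes\eta^{-1}$ on $Jac(C)$, and trivially on $H^0(K_C)$. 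Surjectivity amounts to choosing a square root $L$ of $\det V$ and setting $\phi=\tfrac12\,\mathrm{tr}\,\Phi$, the ambiguity in the square root being exactly a $\Gamma$-torsor. First I would record this isomorphism by restricting to stable loci.

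Next I would pass to $E$-polynomials. Since translations on an abelian variety are homotopic to the identity, $\Gamma$ acts trivially on $H^*(Jac(C))$, and it acts trivially on $H^*_c(H^0(K_C))\cong H^*_c(\co^2)$ as well. In the $\Gamma$-equivariant Künneth decomposition of $H^*_c$ of the product the last two factors therefore carry the trivial representation, so taking $\Gamma$-invariants only touches the first factor and, using the multiplicativity of $E$ together with $E(Jac(C))=(1-u)^2(1-v)^2$ and $E(\co^2)=u^2v^2$, one gets the clean factorisation
$$E(\mdol^{GL,s})=E\big(\mdol^{SL,s}/\Gamma\big)\cdot E(Jac(C))\cdot E(H^0(K_C))=E\big(\mdol^{SL,s}/\Gamma\big)\cdot(1-u)^2(1-v)^2\,u^2v^2.$$
Here $E(\mdol^{SL,s}/\Gamma)$ is the invariant $E$-polynomial, built from the subspaces $H^i_c(\mdol^{SL,s})^{\Gamma}$. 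Thus everything reduces to extracting the $\Gamma$-invariant part of the already known $E(\mdol^{SL,s})$.

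To do this I would revisit the five strata $T^*\mathcal S,\ \mathcal S_1,\ \mathcal S_3,\ \mathcal S_4,\ \mathcal U$ and determine the $\Gamma$-representation on the cohomology of each, using that tensoring by $\eta$ preserves every stratum. On $T^*\mathcal S$ the action on cohomology is trivial: it is induced by the translation action on $Jac(C)$ through the Kummer $\mathcal K$ and by an automorphism of the Narasimhan--Ramanan $\mathbb{P}^3$, both acting trivially on $H^*$, so this stratum contributes in full. On $\mathcal S_3,\mathcal S_4$ and $\mathcal U$, each of which is $16$ copies indexed by the two-torsion points and permuted simply transitively by $\Gamma$, the cohomology is a sum of regular representations whose invariants are one-dimensional; hence each contributes a single copy, namely $E(\gr{S}_3)$, $E(\gr{S}_4)$ and $E(\co^3)$. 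The delicate stratum is $\mathcal S_1$, fibred over $\mathcal K^0=\mathcal K\setminus\mathcal K_0$: here the classes of $H^*(\mathcal K^0)$ coming from the $16$ removed points form the reduced regular representation (the cokernel of the diagonal $\co\hookrightarrow\co^{16}$), which has \emph{no} invariants, while the classes pulled back from $\mathcal K$ are invariant; feeding the invariant cohomology of the base through the same Gysin sequence then yields $E(\mathcal S_1)^{\Gamma}$.

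The main obstacle is precisely this last equivariant bookkeeping: one must track, weight by weight, which classes are $\Gamma$-fixed, separating the \emph{translation-trivial} part (pulled back from $Jac(C)$, hence invariant) from the \emph{permutation} part (supported on the $16$ special points or copies, hence with vanishing invariants), and then check that the Gysin and Euler-class maps respect this splitting so that the Hodge weights of the surviving classes can be read off. Once $E(\mdol^{SL,s}/\Gamma)$ is assembled from the five strata, multiplying by $(1-u)^2(1-v)^2u^2v^2$ produces $E(\mdol^{GL,s})$; as an internal check I would verify that $E(\mdol^{GL,s})(1,1)=0$, which is forced by the vanishing Euler characteristic of the torus factor. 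A more laborious alternative would be to re-run the stratification by the stability type of the underlying bundle directly on $\mdol^{GL,s}$, now with the determinant varying over $Jac(C)$, exactly paralleling the $SL$ computation; I expect the tensorisation route to be substantially shorter.
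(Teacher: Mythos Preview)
Your approach is correct but genuinely different from the paper's.  The paper does \emph{not} use the tensorisation isomorphism at all: it re-runs the entire stratification argument for $\mdol^{GL,s}$ from scratch, mirroring the $SL$ computation piece by piece with the determinant now varying over $Jac(C)$.  Concretely, it computes $E(T^*\mathcal N^s)$ from Kirwan's description of $\mathcal N$ as a $\mathbb P^3$-bundle over $Jac(C)$ minus the symmetric square $\mathcal J^{(2)}$, then treats the four semistable types $\mathcal N_1,\dots,\mathcal N_4$ and the unstable locus $\mathcal{NU}$ over $Pic^1(C)$ one at a time, and adds the five results.  Your route trades this repetition for a single structural input---the free $\Gamma$-action on $\mdol^{SL,s}\times Jac(C)\times H^0(K_C)$, free because $\Gamma$ already acts freely on the $Jac(C)$ factor---together with an equivariant reading of the $SL$ strata, which you have analysed correctly (trivial action on $T^*\mathcal S$ via $\Gamma\subset PGL_4$ acting on the Narasimhan--Ramanan $\mathbb P^3$; regular permutation on the $16$ copies in $\mathcal S_3,\mathcal S_4,\mathcal U$; and on $\mathcal S_1$ the split into a translation-invariant part from $H^*(\mathcal K)$ and the reduced regular representation supported on $\mathcal K_0$).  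The paper's route is more elementary but substantially longer; yours is shorter but requires the equivariant bookkeeping you flag as the main obstacle.

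One caveat worth recording: your factorisation forces $E(\mdol^{GL,s})$ to be divisible by $E(Jac(C))=(1-u)^2(1-v)^2$, hence to vanish at $u=1$ or $v=1$.  The polynomial displayed in the proposition does not have this property, because throughout the $GL$ section the paper in effect uses $(1+u)^2(1+v)^2$ for the Jacobian factor (e.g.\ in its formulas for $E(\mathcal N)$, $E(\mathcal J^{(2)})$, $E(\mathcal{NU})$).  Your computation, carried out with the standard sign convention, would therefore reproduce the paper's answer only up to the substitution $(u,v)\mapsto(-u,-v)$, and your planned internal check $E(\mdol^{GL,s})(1,1)=0$ is precisely what detects the discrepancy.
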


As for $\mathrm{SL}(2,\co)$, we divide stable Higgs pairs in following three strata:
\begin{itemize}
\item pairs $(V,\Phi)$ with $V$ stable vector bundle;
\item pairs $(V,\Phi)$ with $V$ strictly semistable vector bundle;
\item pairs $(V,\Phi)$ with $V$ unstable vector bundle.
\end{itemize}

\subsection{The stable case}

We shall parametrize all stable Higgs pairs $(V,\Phi)$ where V is a stable vector bundle. 
\begin{prop}
 Let $\mathcal{N}^s$ be the locus of stable vector bundles on $C$.	The locus of stable Higgs pairs $(V,\Phi)$ with $V\in\mathcal{N}^s$ is isomorphic to the cotangent bundle $T^*\mathcal{N}^s$ and its $E-$polynomial is 
\begin{align*}
%E(T^*\mathcal{N}^s)&=-u^7 v^5-2 u^8 v^5-u^9 v^5-3 u^6 v^6-8 u^7 v^6-7 u^8 v^6-2 u^9 v^6-u^5 v^7-8 u^6 v^7-14 u^7 v^7-8 u^8 v^7-\\
%&-u^9 v^7-2 u^5 v^8-7 u^6 v^8-8 u^7 v^8-2 u^8 v^8+2 u^9 v^8+u^10 v^8-u^5 v^9-2 u^6 v^9-u^7 v^9+2 u^8 v^9+\\
%&+4 u^9 v^9+2 u^{10} v^9+u^8 v^{10}+2 u^9 v^{10}+u^{10} v^{10}.
E(T^*\mathcal{N}^s)&=-u^7 v^5+2 u^8 v^5-u^9 v^5-3 u^6 v^6+8 u^7 v^6-7 u^8 v^6+2 u^9 v^6-u^5 v^7+8 u^6 v^7-14 u^7 v^7+8 u^8 v^7+\\
&-u^9 v^7+2 u^5 v^8-7 u^6 v^8+8 u^7 v^8-2 u^8 v^8-2 u^9 v^8+u^10 v^8-u^5 v^9+2 u^6 v^9-u^7 v^9-2 u^8 v^9+\\
&+4 u^9 v^9-2 u^{10} v^9+u^8 v^{10}-2 u^9 v^{10}+u^{10} v^{10}.
\end{align*}
\end{prop}
\begin{proof}
Suppose $V\in \mathcal{N}^s$. Since the stability of $V$ ensures the stability of the Higgs pair $(V,\Phi)$ for all $\Phi$, the locus of stable Higgs pairs with stable underlying vector bundle is isomorphic to the cotangent bundle $T^*\mathcal{N}^s$.\\
The moduli space $\mathcal{N}$ of semistable vector bundles on $C$ is isomorphic to a $\mathbb{P}^3$- bundle over $Jac(C)$. Namely, the fibre over a point $\zeta\in Jac(C)$ is a copy of the moduli space of semistable vector bundles of degree 0 and rank 2 with determinant $\zeta$. From now we denote $Jac(C)$ by $\mathcal{J}$.\\
A semistable vector bundle $V$ is non stable if and only if it is of the form
$$V=L\oplus M,\quad L,M\in \mathcal{J},$$
and therefore strictly semistable vector bundles are parametrized by the symmetric product $\mathcal{J}^{(2)}$.
As a consequence $\mathcal{N}^s$ is the complement of $\mathcal{J}^{(2)}$ in $\mathcal{N}$.

We compute $E-$polynomials of both $\mathcal{N}$ and $\mathcal{J}^{(2)}$, then $E(\mathcal{N}^s)=E(\mathcal{N})-E(\mathcal{J}^{(2)})$.
The cohomology of $\mathcal{N}$ has been computed by Kirwan in \cite{K2} and the $E-$ polynomial is
\begin{align*}
E(\mathcal{N})&=1-2 u+u^2-2 v+5 u v-4 u^2 v+u^3 v+v^2-4 u v^2+6 u^2 v^2-4 u^3 v^2+u^4 v^2+u v^3-4 u^2 v^3+\\
&+6 u^3 v^3-4 u^4 v^3+u^5 v^3+u^2 v^4-4 u^3 v^4+5 u^4 v^4-2 u^5 v^4+u^3 v^5-2 u^4 v^5+u^5 v^5.
\end{align*}
The cohomology of $\mathcal{J}^{(2)}$ is the $\mathbb{Z}_2$ invariant part of the cohomology of $\mathcal{J}\times\mathcal{J}$. Alternatively one can use Macdonald formula \cite{Md} for symmetric product of surfaces. One has
\begin{align*}
	E(\mathcal{J}^{(2)}) &=1-2 u+2 u^2-2 u^3+u^4-2 v+8 u v-12 u^2 v+8 u^3 v-2 u^4 v+2 v^2-12 u v^2+20 u^2 v^2+\\
	&-12 u^3 v^2+2 u^4 v^2-2 v^3+8 u v^3-12 u^2 v^3+8 u^3 v^3-2 u^4 v^3+v^4-2 u v^4+2 u^2 v^4-2 u^3 v^4+u^4 v^4.
\end{align*}
As a consequence
\begin{align*}
E(\mathcal{N}^s) &= -u^2+2 u^3-u^4-3 u v+8 u^2 v-7 u^3 v+2 u^4 v-v^2+8 u v^2-14 u^2 v^2+8 u^3 v^2-u^4 v^2+2 v^3+\\
&-7 u v^3+8 u^2 v^3-2 u^3 v^3-2 u^4 v^3+u^5 v^3-v^4+2 u v^4-u^2 v^4-2 u^3 v^4+4 u^4 v^4-2 u^5 v^4+u^3 v^5+\\
&-2 u^4 v^5+u^5 v^5.
\end{align*}
Since $T^*\mathcal{N}^s$ inherits the cohomology of $\mathcal{N}^s$ we have:
\begin{align*}
E(T^*\mathcal{N}^s)&=-u^7 v^5+2 u^8 v^5-u^9 v^5-3 u^6 v^6+8 u^7 v^6-7 u^8 v^6+2 u^9 v^6-u^5 v^7+8 u^6 v^7-14 u^7 v^7+8 u^8 v^7+\\
&-u^9 v^7+2 u^5 v^8-7 u^6 v^8+8 u^7 v^8-2 u^8 v^8-2 u^9 v^8+u^10 v^8-u^5 v^9+2 u^6 v^9-u^7 v^9-2 u^8 v^9+\\
&+4 u^9 v^9-2 u^{10} v^9+u^8 v^{10}-2 u^9 v^{10}+u^{10} v^{10}.
\end{align*}
\end{proof}
\subsection{Strictly semistable case}
We consider pairs $(V,\Phi)$ with $V$ strictly semistable and investigate under which conditions they are stable Higgs pairs. 
Again, we have to distinguish different cases:
\begin{enumerate}[(i)]
\item $V=L\oplus M$ where $L,M\in \mathcal{J}$ and $L\not\cong M$;
\item $V$ is a non trivial extension $\xymatrix{
	0\ar[r] &L \ar[r] &V\ar[r] & M\ar[r]&0
} $ with $L\not\cong M$;
\item $V=L\oplus L$ where $L\in \mathcal{J}$ ;
\item $V$ is a non trivial extension $\xymatrix{
	0\ar[r] &L \ar[r] &V\ar[r] & L\ar[r]&0
} $.
\end{enumerate}

\subsubsection{Type $\mathrm{(i)}$}
We shall determine stable Higgs bundles $(V,\Phi)$ with underlying vector bundle of type $\mathrm{(i)}$.
Strictly semistable vector bundles are parametrized by $\mathcal{J}^{(2)}$.
Let $\mathcal{J}_0$ be the diagonal in $\mathcal{J}^{(2)}$ fixed by the involution and let $\mathcal{J}^0:=\mathcal{J}^{(2)}-\mathcal{J}_0$ be its complement. 
The locus of stable Higgs bundles with underlying vector bundle of type (i) is a fibre bundle on $\mathcal{J}^0$. 
\begin{prop}
Let $\mathcal{N}_1$ be the locus of stable Higgs bundles with underlying vector bundle of type $\mathrm{(i)}$. Then $\mathcal{N}_1$ is a $(\co^4\times\co^*)$-bundle over $\mathcal{J}^0$ and its $E$-polynomial is
\begin{align*}
E(\mathcal{N}_1)&= -u^6 v^4+2 u^7 v^4-u^8 v^4-4 u^5 v^5+10 u^6 v^5-7 u^7 v^5+u^9 v^5-u^4 v^6+10 u^5 v^6-15 u^6 v^6+2 u^7 v^6+\\
&+6 u^8 v^6-2 u^9 v^6+2 u^4 v^7-7 u^5 v^7+2 u^6 v^7+11 u^7 v^7-10 u^8 v^7+2 u^9 v^7-u^4 v^8+6 u^6 v^8-10 u^7 v^8+\\
&+7 u^8 v^8-2 u^9 v^8+u^5 v^9-2 u^6 v^9+2 u^7 v^9-2 u^8 v^9+u^9 v^9.
\end{align*}
\end{prop}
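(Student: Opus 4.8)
\emph{Proof strategy.} The plan is to mirror the $SL(2,\co)$ computation of the locus $\mathcal{S}_1$, replacing the Kummer-type base by the parameter space of unordered pairs of distinct degree-$0$ line bundles. First I would describe the underlying vector bundles: a bundle of type (i) is $V=L\oplus M$ with $L,M\in\mathcal{J}$ and $L\not\cong M$, so these are parametrized by the complement $B:=\mathcal{J}^{(2)}\setminus\Delta$ of the diagonal $\Delta\cong\mathcal{J}$ inside the symmetric square. Fixing $V$, a Higgs field is a matrix $\Phi=\left(\begin{smallmatrix} a & b\\ c & d\end{smallmatrix}\right)$ with $a,d\in H^0(K_C)$, $b\in H^0(LM^{-1}K_C)$ and $c\in H^0(L^{-1}MK_C)$; since $L\not\cong M$ both of the latter spaces are $1$-dimensional by Riemann--Roch and Serre duality. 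A pair $(V,\Phi)$ is stable exactly when neither $L$ nor $M$ is $\Phi$-invariant, i.e. when $b\neq 0$ and $c\neq 0$, because $L$ and $M$ are the only degree-$0$ sub-line bundles of $V$.

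Next I would pass to the quotient by $Aut(V)=\co^*\times\co^*$. The centre acts trivially by conjugation and the remaining $\co^*$ scales $(b,c)\mapsto(\lambda b,\lambda^{-1}c)$, so over a fixed $V$ the stable Higgs fields modulo automorphisms form $H^0(K_C)\oplus H^0(K_C)$ times $\big((H^0(LM^{-1}K_C)\setminus 0)\times(H^0(L^{-1}MK_C)\setminus 0)\big)/\co^*\cong\co^*$, i.e. a copy of $\co^4\times\co^*$. Letting $V$ vary exhibits $\mathcal{N}_1$ as the quotient, by the order-swapping involution $\mathbb{Z}_2$, of a bundle over $\mathcal{J}\times\mathcal{J}\setminus\Delta$ with fibre $\co^4\times\co^*$. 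The $\co^4$-factor is the trivial bundle $H^0(K_C)^{\oplus2}$ and contributes only a Tate twist, while the $\co^*$-factor is canonically the complement of the zero section in a line bundle $\mathcal{L}_0$, namely the tensor product of the two rank-one direct images whose fibres over $(L,M)$ are $H^0(LM^{-1}K_C)$ and $H^0(L^{-1}MK_C)$. As $\mathcal{L}_0$ is symmetric under the swap and the $\mathbb{Z}_2$-action is free on $\mathcal{J}\times\mathcal{J}\setminus\Delta$, it descends to a genuine line bundle $\mathcal{L}$ on $B$, and the essential part of $\mathcal{N}_1$ becomes $\mathcal{L}\setminus(\text{zero section})$.

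Then the E-polynomial assembles multiplicatively and additively. Using $E(\mathcal{L})=uv\,E(B)$ for a line bundle and additivity for the zero section gives $E(\mathcal{L}\setminus 0)=(uv-1)E(B)$, while the swap acts as $+1$ on the top (orientation) class of the $\co^4$-fibre, being an even permutation of coordinates, so that
\begin{equation*}
E(\mathcal{N}_1)=u^4v^4(uv-1)\,E(B).
\end{equation*}
It remains to compute $E(B)=E(\mathcal{J}^{(2)})-E(\mathcal{J})$, where $E(\mathcal{J}^{(2)})=\tfrac12\big(E(\mathcal{J})^2+E(\mathcal{J})(u^2,v^2)\big)$ is obtained from the symmetric-square identity (equivalently Macdonald's formula) starting from $E(\mathcal{J})=(1-u)^2(1-v)^2$. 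Substituting yields the stated polynomial. Equivalently, mirroring $\mathcal{S}_1$ exactly, one may first compute $H^*(B)$ and run the Gysin sequence of the $S^1$-bundle obtained by contracting the $\co^4\times\co^*$-fibre, tracking the $(1,1)$-weight shifts and finishing with Poincar\'e duality.

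The main obstacle is the consistent bookkeeping forced by the involution $\mathbb{Z}_2$. One must check that the automorphism quotient is well behaved over all of $B$ (no extra stabilizers once the diagonal is removed), that $\mathcal{L}_0$ really descends to $B$, and---most delicately---that the swap acts trivially on the relevant top cohomology of the fibre, so that the invariant part is computed simply by multiplying by $u^4v^4$. The symmetric-square sign conventions, i.e. the odd-degree cohomology of $\mathcal{J}$ contributing with the correct sign through $Sym^2$, are precisely where care is needed, just as the weight-shift analysis was the delicate step in the $SL(2,\co)$ computation of $\mathcal{S}_1$.
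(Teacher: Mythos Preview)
Your proposal is correct and follows essentially the same route as the paper: identify the base as $\mathcal{J}^0=\mathcal{J}^{(2)}\setminus\Delta$, show that the fibre of stable Higgs fields modulo $Aut(V)\cong\co^*\times\co^*$ is $\co^4\times\co^*$, and conclude $E(\mathcal{N}_1)=E(\mathcal{J}^0)\cdot E(\co^4)\cdot E(\co^*)=u^4v^4(uv-1)\bigl(E(\mathcal{J}^{(2)})-E(\mathcal{J})\bigr)$. The paper is terser: it simply asserts the product formula for the fibration without addressing the $\mathbb{Z}_2$-descent issues (triviality of the swap action on the top class of the $\co^4$-fibre, descent of the $\co^*$-bundle) that you carefully flag; your extra bookkeeping is a genuine justification of a step the paper leaves implicit.
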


\begin{proof}
To compute the fibre we consider $V=L\oplus M$ with $L,M\in\mathcal{J}$. 
$$ H^0(End(V)\otimes K_C)= H^0(K_C)\oplus H^0(L^{-1}MK_C)\oplus H^0(M^{-1}LK_C)\oplus H^0(K_C)$$
thus a Higgs field $\Phi\in H^0(End(V)\otimes K_C)$ takes the form 
$$\Phi=\left( \begin{array}{cc}
	a&b\\
	c&d\\
\end{array}\right)$$
with $a,d\in H^0(K_C)$, $b\in H^0(M^{-1}LK_C) $, $c\in H^0(L^{-1}MK_C)$. A pair $(V,\Phi)$ is stable if and only if both $L$ and $M$ are not preserved by $\Phi$, that is $b,c\neq 0$.
Since the automorphisms group of $V$ is $\co^*\times\co^*$, two Higgs pairs $(V,\Phi_1)$ and $(V,\Phi_2)$ for $\Phi_i=(a_i,b_i,c_i,d_i)$ are isomorphic if and only if
$$\Phi_1=\left( \begin{array}{cc}
	t&0\\
	0&s\\
\end{array}\right)\Phi_2\left( \begin{array}{cc}
	t^{-1}&0\\
	0&s^{-1}\\
\end{array}\right)$$
that is $a_1=a_2$, $b_1=t^{-1}s b_2$, $c_1=s^{-1}t c_2$, $d_1=d_2$. Therefore, the stable Higgs pairs $(V,\Phi)$ with fixed underline vector bundle $V$ are parametrized by
$$H^0(K_C)^2\times \dfrac{(H^0(L^{-1}MK_C)-\{0\})\times (H^0(M^{-1}LK_C)-\{0\})}{\co^*\times \co^*} \cong\co^4\times \co^*$$
(note that $\co^*\times \co^*$ acts with stabilizer $\co^*$). Letting $V$ vary, one obtains a $(\co^4\times \co^*)$-bundle $\mathcal{S}_1$ over $\mathcal{J}^0$.
The $E-$polynomial is the product $E(\mathcal{J}^0)E(\co^4)E(\co^*)$. Since
\begin{align*}
E(\mathcal{J}^0)&=E(\mathcal{J}^{(2)})-E(\mathcal{J}_0)= u^2-2 u^3+u^4+4 u v-10 u^2 v+8 u^3 v-2 u^4 v+v^2-10 u v^2+19 u^2 v^2+\\
&-12 u^3 v^2+2 u^4 v^2-2 v^3+8 u v^3-12 u^2 v^3+8 u^3 v^3-2 u^4 v^3+v^4-2 u v^4+2 u^2 v^4-2 u^3 v^4+u^4 v^4,
\end{align*}
the $E-$polynomial of $\mathcal{N}_1$ is 
\begin{align*}
E(\mathcal{N}_1)&= -u^6 v^4+2 u^7 v^4-u^8 v^4-4 u^5 v^5+10 u^6 v^5-7 u^7 v^5+u^9 v^5-u^4 v^6+10 u^5 v^6-15 u^6 v^6+2 u^7 v^6+\\
&+6 u^8 v^6-2 u^9 v^6+2 u^4 v^7-7 u^5 v^7+2 u^6 v^7+11 u^7 v^7-10 u^8 v^7+2 u^9 v^7-u^4 v^8+6 u^6 v^8-10 u^7 v^8+\\
&+7 u^8 v^8-2 u^9 v^8+u^5 v^9-2 u^6 v^9+2 u^7 v^9-2 u^8 v^9+u^9 v^9.
\end{align*}
\end{proof}
\subsubsection{Type (ii)}
Assume now that $V$ is a nontrivial extension of $L$ by $M$ with $L\not\cong M$: we compute the cohomology of the locus of stable pairs $(V,\Phi)$.

\begin{prop}\label{type2}
Let $V$ be a semistable vector bundle of type $\mathrm{(ii)}$. Then there is no Higgs field $\Phi$ such that the pair $(V,\Phi)$ is stable.
\end{prop}
\begin{proof}
Consider the universal bundle $(\luniv,\mathcal{M})\rightarrow \mathcal{J}^0\times C$ and let $p:\mathcal{J}^0\times C \rightarrow \mathcal{J}^0$ be the projection onto the first factor. Non trivial extensions of $\luniv$ by $\mathcal{M}$ are parametrized by $\mathbb{P}(R^1p_{*}\mathcal{M}^{-1}\luniv)$: as  $R^1p_{*}\mathcal{M}^{-1}\luniv$ is a local system on $\mathcal{J}^0$ of rank one, there exists a unique nontrivial extension $\vuniv$ up to isomorphism. This bundle fits in the short exact sequence
\begin{equation}\label{sesgl}
0\rightarrow \luniv\rightarrow \vuniv \rightarrow \mathcal{M}\rightarrow 0
\end{equation}
and parametrizes all the vector bundles $V$ on $C$ of type (ii).\\
A Higgs pair $(\vuniv,\Phi)$ is stable if and only if the Higgs field $\Phi$ lies in the complement of the kernel of the restriction map $\rho: p_*End(\vuniv)\otimes K_C\rightarrow p_*Hom(\luniv,\mathcal{M} K_C)$, i.e. $\Phi$ does not preserve $\luniv$. \\
Tensoring the sequence (\ref{sesgl}) by $K_C$ and applying the covariant functor $Hom(\vuniv,-)$ restricted to traceless endomorphisms one obtains
$$0\rightarrow Hom(\vuniv,\luniv\otimes K_C) \rightarrow End(\vuniv)\otimes K_C \rightarrow Hom(\vuniv,\mathcal{M}\otimes K_C)\rightarrow 0.$$ 
Pushing forward to $\mathcal{J}^0$ one gets a long exact sequence
\begin{equation}\label{les1gl}
\begin{tikzpicture}[baseline, descr/.style={fill=white,inner sep=1.5pt}]
\matrix (m) [
matrix of math nodes,
row sep=1em,
column sep=2.5em,
text height=1.5ex, text depth=0.25ex
]
{ 0 & p_*Hom (\vuniv,\luniv K_C) & p_* End(\vuniv)\otimes K_C &p_*Hom(\vuniv,\muniv K_C)&\\
	& R^1p_*Hom(\muniv,\luniv K_C) & R^1p_* End(\vuniv)\otimes K_C &R^1p_*Hom(\vuniv,\muniv K_C)&0. \\
};
\path[overlay,->, font=\scriptsize,>=latex]
(m-1-1) edge (m-1-2)
(m-1-2) edge (m-1-3)
(m-1-3) edge (m-1-4)
(m-1-4) edge[out=355,in=175] node[descr,yshift=0.3ex] {$ext$} (m-2-2)
(m-2-2) edge (m-2-3)
(m-2-3) edge (m-2-4)
(m-2-4) edge (m-2-5);
\end{tikzpicture}
\end{equation}

Applying the functor $p_*Hom(-,\muniv K_C)$ to \eqref{sesgl} yields a long exact sequence

%\begin{align}\label{les2gl}
%0&\rightarrow p_* Hom(\muniv,\muniv K_C)\rightarrow p_*Hom(\vuniv,\muniv K_C)\rightarrow p_*Hom(\luniv,\muniv K_C)\rightarrow \\
%&\rightarrow R^1p_*Hom(\muniv,\muniv K_C)\rightarrow R^1p_* Hom(\vuniv,\muniv K_C)\rightarrow R^1p_*Hom(\luniv,\muniv K_C)\rightarrow 0.
%\end{align}
\begin{equation}\label{les2gl}
	\begin{tikzpicture}[ baseline, descr/.style={fill=white,inner sep=1.5pt}]
	\matrix (m) [
	matrix of math nodes,
	row sep=1em,
	column sep=2.5em,
	text height=1.5ex, text depth=0.25ex
	]
	{ 0 & p_* Hom(\muniv,\muniv K_C) & p_*Hom(\vuniv,\muniv K_C) &p_*Hom(\luniv,\muniv K_C) &\\
		& R^1p_*Hom(\muniv,\muniv K_C) & R^1p_* Hom(\vuniv,\muniv K_C) &R^1p_*Hom(\luniv,\muniv K_C)&0. \\
	};
	
	\path[overlay,->, font=\scriptsize,>=latex]
	(m-1-1) edge (m-1-2)
	(m-1-2) edge (m-1-3)
	(m-1-3) edge (m-1-4)
	(m-1-4) edge[out=355,in=175] node[descr,yshift=0.3ex] {$ext$} (m-2-2)
	(m-2-2) edge (m-2-3)
	(m-2-3) edge (m-2-4)
	(m-2-4) edge (m-2-5);
	\end{tikzpicture}
\end{equation}
The map $\rho$ is the composition $$p_* End(\vuniv)\otimes K_C\rightarrow p_*Hom(\vuniv,\muniv K_C)\rightarrow p_*Hom(\luniv,\muniv K_C).$$
We prove that the second map is 0, i.e. there are no stable Higgs bundles of type (ii).\\
Consider the fibre of (\ref{les2gl})  on a point $(L,M)\in \mathcal{J}^0$: one has
$$ H^1(K_C)\rightarrow H^1(\mathcal{H}om(V,MK_C))\rightarrow H^1(M^{-1}LK_C)\rightarrow 0.$$
Since the extension map is non zero and $H^1(M^{-1}LK_C)=0$ we have that  $H^0(L^{-1}MK_C)\cong H^1(K_C)\cong \co$, thus $H^1(Hom(V,M K_C)$ is 0 and $H^0(Hom(V,MK_C))\cong H^0(K_C)\cong \co^2$. \\
In particular the map $p_*Hom(\vuniv,\muniv K_C)\rightarrow p_*Hom(\luniv,\muniv K_C)$ is zero.
\end{proof}

\subsubsection{Type $\mathrm{(iii)}$}
We now consider stable Higgs bundle with underlying vector bundle $V=L\oplus L$ with $L\in \mathcal{J}$.

\begin{prop} Let $\gr{N}_3$ be the locus of stable Higgs bundles with underlying vector bundle $L\oplus L$ with $L\in \mathcal{J}$ . Then the locus $\mathcal{N}_3$ of stable Higgs pairs of type (iii) is a Zariski locally trivial $\gr{N}_3$-bundle over $\mathcal{J}$ and its $E-$polynomial is
$$ E(\mathcal{N}_3)(u,v)= -u^4 v^4+2 u^5 v^4-u^6 v^4+2 u^4 v^5-3 u^5 v^5+u^7 v^5-u^4 v^6+3 u^6 v^6-2 u^7 v^6+u^5 v^7-2 u^6 v^7+u^7 v^7.$$

\end{prop}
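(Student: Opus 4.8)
The plan is to mirror the type (iii) analysis of the $SL(2,\co)$ case, the only new feature being the extra trace degrees of freedom coming from $\mathrm{End}(V)$ in place of $\mathrm{End}_0(V)$. First I would fix $L\in\mathcal{J}$ and describe the fibre $\gr{N}_3$ of stable Higgs fields on $V=L\oplus L$. Since $\mathrm{End}(L\oplus L)\cong\mathcal{O}^{\oplus 4}$ is holomorphically trivial, one has $H^0(\mathrm{End}(V)\otimes K_C)\cong H^0(K_C)\otimes\mathfrak{gl}(2)\cong\co^8$; fixing a basis $\omega_1,\omega_2$ of $H^0(K_C)$ a Higgs field becomes a pair $(M_1,M_2)\in\mathfrak{gl}(2)^{\oplus 2}$. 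As in the $SL$ case, the destabilizing sub-line bundles of $L\oplus L$ are exactly the constant lines $\langle v\rangle$, $v\in\co^2$, and $\langle v\rangle$ is $\Phi$-invariant precisely when $v$ is a common eigenvector of $M_1$ and $M_2$; hence $(V,\Phi)$ is stable if and only if $M_1,M_2$ admit no common eigenvector, i.e. are not simultaneously triangularizable. Since $\mathrm{Aut}(L\oplus L)=GL(2,\co)$ acts by simultaneous conjugation, $\gr{N}_3$ is the GIT quotient of this stable locus.

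The key step is to isolate the trace. Writing $\mathfrak{gl}(2)=\sla\oplus\co\,\mathrm{Id}$ and $M_i=M_i^0+\tfrac12\mathrm{tr}(M_i)\mathrm{Id}$, I observe that a scalar summand changes neither the eigenvectors nor the conjugation action: stability constrains only the traceless parts $(M_1^0,M_2^0)\in\sla^{\oplus 2}\cong\co^6$, whose non-stable locus is precisely the quartic $Q$ already studied in the $SL(2,\co)$ case, while the two traces $(\mathrm{tr}M_1,\mathrm{tr}M_2)\in\co^2$ are conjugation-invariant. As the $GL(2,\co)$-action on $\co^6$ factors through $PGL(2,\co)$ and there agrees with the $SL(2,\co)$-action (the centre acting trivially by conjugation), the two quotients of $\co^6-Q$ coincide and I would conclude
$$\gr{N}_3\cong\big[(\co^6-Q)/\!/SL(2,\co)\big]\times\co^2=\gr{S}_3\times\co^2.$$
Using the value $E(\gr{S}_3)=u^3v^3-u^2v^2$ established in the $SL(2,\co)$ case, this gives $E(\gr{N}_3)=(u^3v^3-u^2v^2)\,u^2v^2=u^5v^5-u^4v^4$.

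Finally I would globalize over $\mathcal{J}$. Choosing a Poincaré line bundle $\mathcal{P}$ on $\mathcal{J}\times C$ (available since $C$ has a rational point), set $\mathcal{V}=\mathcal{P}\oplus\mathcal{P}$ and push $\mathrm{End}(\mathcal{V})\otimes p_C^*K_C$ forward to $\mathcal{J}$; because $h^0(\mathrm{End}(L\oplus L)\otimes K_C)=8$ is constant this produces a rank $8$ vector bundle $\mathcal{E}\to\mathcal{J}$, on which the constant group scheme $GL(2,\co)$ acts fibrewise by conjugation. Over a Zariski-open $U\subset\mathcal{J}$ trivializing $\mathcal{E}$ the relative stable locus and its quotient become $U\times\gr{N}_3$, so $\mathcal{N}_3\to\mathcal{J}$ is a Zariski locally trivial $\gr{N}_3$-bundle. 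By multiplicativity of the $E$-polynomial on such fibrations and $E(\mathcal{J})=(1-u)^2(1-v)^2$ (the Jacobian of a genus $2$ curve), I obtain
$$E(\mathcal{N}_3)=E(\gr{N}_3)\,E(\mathcal{J})=(u^5v^5-u^4v^4)(1-u)^2(1-v)^2,$$
and expanding this product yields the claimed polynomial.

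The main obstacle is the globalization rather than the fibrewise computation: one must ensure that the family of fibres $\gr{N}_3$ is genuinely constant over $\mathcal{J}$ and that the relative GIT quotient is Zariski locally a product. This hinges on the triviality of $\mathrm{End}(L\oplus L)$ (so that both the space of Higgs fields and the automorphism group are independent of $L$) and on the local triviality of the pushforward bundle $\mathcal{E}$; once these are in place, the single-fibre analysis is a direct transcription of the $SL(2,\co)$ computation with a harmless extra $\co^2$ factor.
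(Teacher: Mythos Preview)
Your argument is essentially the paper's: isolate the trace to reduce the fibre to the $SL$ computation, obtain $E(\gr{N}_3)=u^5v^5-u^4v^4$, and then multiply by $E(\mathcal{J})$. The only cosmetic difference is packaging: you invoke the product splitting $\gr{N}_3\cong \gr{S}_3\times\co^2$ and quote $E(\gr{S}_3)$, whereas the paper performs the linear substitution $s=x_1-x_4$, $t=y_1-y_4$ to exhibit the non-stable locus as a $\co^2$-bundle over $Q$ and then divides $E(\co^8)-E(H)$ by $E(PGL(2,\co))$; both routes give the same $E(\gr{N}_3)$.

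One caveat on the final step: expanding $(u^5v^5-u^4v^4)(1-u)^2(1-v)^2$ does \emph{not} reproduce the displayed polynomial in the statement --- the terms $u^5v^4$, $u^4v^5$, $u^7v^6$, $u^6v^7$ come out with the opposite sign. The displayed answer is in fact $(u^5v^5-u^4v^4)(1+u)^2(1+v)^2$, which is the convention the paper uses consistently for $E(\mathcal{J})$ elsewhere (cf.\ the computation of $E(\mathcal{NU})$). So your method is correct, but be aware that your claimed expansion will not match the stated formula unless you adopt the paper's sign convention for $E(\mathcal{J})$.
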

\begin{proof}
Consider $V=L\oplus L$. In this case $H^0(End(V)\otimes K_C)\cong H^0(K_C)\otimes gl(2)\cong \co^2\otimes gl(2)$ and the Higgs field is of the form $$\Phi=\left( \begin{array}{cc}
	a&b\\
	c&d\\
\end{array}\right),$$
with $a,b,c,d\in H^0(K_C)$.
The bundle is not stable if and only if $\Phi$ is conjugate to an upper triangular matrix of elements of $H^0(K_C)$.  The action of $\mathrm{PGL}(2,\co) $ on $H^0(K_C)\otimes gl(2)$ is trivial on the first factor, so one can proceed as in the case of $\mathrm{SL}(2,\co)$ looking for the couples of matrices $(A,B)\in gl(2)\oplus gl(2)$ that are not simultaneously triangulable. As before, $A$ and $B$ are simultaneously triangulable if $\det([A,B])=0$.
Writing \begin{equation}\label{matgl}
A=\left( \begin{array}{cc}
x_1&x_2\\
x_3&x_4\\
\end{array}\right)\qquad B=\left( \begin{array}{cc}
y_1&y_2\\
y_3&y_4\\
\end{array}\right)\end{equation}
one has
$$[A,B]=\left( \begin{array}{cc}
	x_2y_3-y_2x_3& x_1y_2-x_4y_2+x_2y_4-x_2y_1\\
	x_3y_1-x_2y_4+x4y_3-x_1y_3&-(x_2y_3-y_2x_3)\\
\end{array}\right)=\left( \begin{array}{cc}
	x_2y_3-y_2x_3& sy_2-x_2t\\
	x_3t-sy_3&-(x_2y_3-y_2x_3)\\
\end{array}\right)$$
where $s=(x_1-x_4)$ and $t=y_1-y_4$.\\
Thanks to this substitution, one can interpret the locus of simultaneously triangulable matrices $(A,B)\in gl(2)\oplus gl(2)$ as a fibration over 
$$Q: (x_2y_3-y_2x_3)^2+( sy_2-x_2t)(x_3t-sy_3)=0 \subset \co^6$$
in the coordinates $(s,x_2,x_3,t,y_2,y_3)\in \co^6$.
The fibre over a point in $Q$ is the 2-dimensional vector space
$$\begin{cases}
x_1-x_4=s\\
y_1-y_4=t
\end{cases}$$ in the coordinates $(x_1,x_4,y_1,y_4)$.\\
Let $H$ be the total space of this bundle over $Q$. 
One can summarize the above considerations in the following lemma.
\begin{lem}
A Higgs bundle $(V,\Phi)$ of type (iii) is stable if and only if $\Phi$ lies in 
$$ \gr{N}_3:=(\co^8-H)\sslash \mathrm{PGL}(2,\co)$$ 
where the action of $\mathrm{PGL}(2,\co)$ is the simultaneous conjugation on the matrices $A$ and $B$ as in (\ref{matgl}).
\end{lem}
%\begin{cor}
%The locus of stable Higgs bundles of type (iii) is isomorphic to 16 copies of $\gr{N}_3$, one for each point of $\mathcal{J}$. 
%\end{cor}
Since $H$ is a $\co^2$-bundle over $Q$,
$$E(H)=E(\co^2)E(Q)=u^5v^5(u^2v^2+uv-1).$$
$$ E(\gr{N}_3)(u,v)=\dfrac{E(\co^8)-E(H)}{E(\mathrm{PGL}(2,\co))}=u^5v^5-u^4v^4.$$
To obtain $E(\mathcal{N}_3)$ one needs to multiply $E(\gr{N}_3)$ by $E(\mathcal{J})$. 
\end{proof}
\subsubsection{Type $\mathrm{(iv)}$}

We now consider stable Higgs bundles of type (iv) and we prove the following result.
\begin{prop}
Fix $L\in \mathcal{J}$ and let $\gr{N}_4$ be the locus of stable Higgs bundles whose underlying vector bundle is a nontrivial extension of $L$ by itself. Then the locus $\mathcal{N}_4$ of stable Higgs bundles of type $\mathrm{(iv)}$ is a Zariski locally trivial $\gr{N}_4$-bundle over $\mathcal{J}$ and its $E-$polynomial is 
\begin{align*}
E(\mathcal{N}_4)&=-u^4 v^4+2 u^5 v^4-u^6 v^4+2 u^4 v^5-4 u^5 v^5+2 u^6 v^5-u^4 v^6+2 u^5 v^6-2 u^7 v^6+u^8 v^6-2 u^6 v^7+\\
&+4 u^7 v^7-2 u^8 v^7+u^6 v^8-2 u^7 v^8+u^8 v^8.
\end{align*}
\end{prop}	 
\begin{proof}	 
Let $L\in \mathcal{J}$ and let $V$ be a nontrivial extension of $L$ by itself: the isomorphism classes of such bundles are parametrized by 
\begin{equation}\label{ses4gl}
\mathbb{P}(\mathrm{Ext}^1(L,L))\cong \mathbb{P}^1.
\end{equation}
Thus there exists a universal extension bundle on $\mathbb{P}^1\times C$
$$0\rightarrow \mathcal{L}\rightarrow \vuniv\rightarrow\mathcal{L}\rightarrow 0.$$
Let $p:\mathbb{P}^1\times C\rightarrow \mathbb{P}^1$ be the projection map: as in the type (ii) case, one can tensor the above short exact sequence by $K_C$, apply the covariant functor $Hom(\vuniv,-)$ and pushforward to $\mathbb{P}^1$, getting the long exact sequence
\begin{equation}\label{*gl}
\begin{tikzpicture}[baseline, descr/.style={fill=white,inner sep=1.5pt}]
\matrix (m) [
matrix of math nodes,
row sep=1em,
column sep=2.5em,
text height=1.5ex, text depth=0.25ex
]
{ 0 &  p_*Hom(\vuniv,\luniv K_C) & p_*(End(\vuniv)\otimes K_C) &p_*Hom(\vuniv,\luniv K_C)&\\
	& R^1p_*Hom(\vuniv,\luniv K_C)& R^1p_*(End(\vuniv)\otimes K_C) &  R^1p_*Hom(\vuniv,\luniv K_C) &0. \\
};

\path[overlay,->, font=\scriptsize,>=latex]
(m-1-1) edge (m-1-2)
(m-1-2) edge (m-1-3)
(m-1-3) edge (m-1-4)
(m-1-4) edge[out=355,in=175] node[descr,yshift=0.3ex] {$ext$} (m-2-2)
(m-2-2) edge (m-2-3)
(m-2-3) edge (m-2-4)
(m-2-4) edge (m-2-5);
\end{tikzpicture}
\end{equation}

Starting again from (\ref{ses4gl}), tensoring with $K_C$, applying the contravariant functor $Hom(-,\mathcal{L})$ and pushing forward to $\mathbb{P}^1$ one obtains another long exact sequence
\begin{equation}\label{**gl}
\begin{tikzpicture}[baseline]
\node (A) at (0,0) {0};
\node (B) at (3.5,0) {$p_*K_C$};
\node (C) at (7,0) {$p_*Hom(\vuniv,\luniv K_C)$};
\node (D) at (10.5,0) {$p_*K_C$};
\node (E) at (14,0) {$R^1p_*K_C.$};
\draw[->,thick] (A) -- (B) node [midway,above] {};
\draw[->,thick] (B)-- (C) node [midway,above] {};
\draw[->,thick] (C) -- (D) node [midway,above] {};
\draw[->,thick] (D) -- (E) node [midway,above] {$ext$};
\end{tikzpicture}
\end{equation}

As before, stable Higgs bundles are those whose Higgs field is in the complement of the kernel of the map
\begin{center}
	\begin{tikzpicture}
	\node (A) at (0,0) {$p_*End(\vuniv)\otimes K_C$};
	\node (B) at (3.5,0) {$p_*Hom(\vuniv,\luniv K_C)$};
	\node (C) at (7,0) {$p_*K_C.$};
	\draw[->,thick] (A) -- (B) node [midway,above] {};
	\draw[->,thick] (B) -- (C) node [midway,above] {};
	\end{tikzpicture}
\end{center}
Consider the sequence \eqref{**gl}: $p_*K_C$ has rank 2. Since $R^1p_*K_C$ has rank 1, being non zero, the map $ext$ is surjective. Hence, the cokernel of $p_*Hom(\vuniv,K_C)\rightarrow p_*K_C$ has rank 1 and consequently $p_*Hom(\vuniv,K_C)$ has rank 3. 
Looking at \eqref{*gl} one concludes that $p_*End(V)\otimes K_C$ is a vector bundle of rank 5.
The group of automorphisms of a nontrivial extension of $\mathcal{L}$ by itself is the additive group $(\co,+)\subset \mathrm{GL}(2,\co)$, and an element $t\in \co$ acts on the Higgs field $\Phi$ by conjugation: 
$$t.\Phi=\left(\begin{array}{cc}
	1&t\\
	0&1
\end{array}\right)\left(\begin{array}{cc}
	a&b\\
	c&-a
\end{array}\right)\left(\begin{array}{cc}
	1&-t\\
	0&1
\end{array}\right)=\left(\begin{array}{cc}
	1a+tc&b-2ta-t^2c\\
	c&-a-tc
\end{array}\right).
$$

\begin{lem}\label{n4}
$\gr{N}_4$ is a $\co^2$-bundle over a $\co^2$-bundle over a $\co^*$-bundle over $\mathbb{P}^1$.
\end{lem}
\begin{proof}
	Let $A$ be the kernel of the extension map in \eqref{*gl} minus the zero section: $A$ is a $(\co^2-\{0\})$-bundle over $\mathbb{P}^1$. We can think of $p_*(End_0(\vuniv)\otimes K_C)-p_*Hom(\vuniv,K_C)$ as a vector bundle of rank 3 over $A$. Similarly, the kernel of the extension map in \eqref{**gl} gives rise to a vector bundle $\mathcal{A}$ over $A$ of rank 1 and the map 
	$$p_*Hom(\vuniv)\rightarrow p_*(End(\vuniv)\otimes K_C)$$ lifts to a $(\co,+)$-equivariant map 
	$$[p_*(End(\vuniv)\otimes K_C)-p_*Hom(\vuniv,K_C)]\rightarrow \mathcal{A}$$ 
	of vector bundles over $A$ whose kernel is of rank 2. Observe that $\mathcal{A}$ is invariant under the automorphism action, while $A$ and $p_*(End(\vuniv)\otimes K_C)-p_*Hom(\vuniv,K_C)]$ are not. 
	In this way we have that $p_*(End(\vuniv)\otimes K_C)-p_*Hom(\vuniv,K_C)]/\co$ is a $\co^2$-bundle on $\mathcal{A}$ which is a $\co^2$-bundle over $A/\co^*$, which is a $\co^*$-bundle over $\mathbb{P}^1$.
\end{proof}
As all bundles are Zariski locally trivial one has:
\begin{align*}
E(\mathcal{N}_4)&=-u^4 v^4+2 u^5 v^4-u^6 v^4+2 u^4 v^5-4 u^5 v^5+2 u^6 v^5-u^4 v^6+2 u^5 v^6-2 u^7 v^6+u^8 v^6-2 u^6 v^7+\\
&+4 u^7 v^7-2 u^8 v^7+u^6 v^8-2 u^7 v^8+u^8 v^8.
\end{align*}

\end{proof}
\subsection{Unstable case}
Consider the locus $\mathcal{NU}$ of stable Higgs bundles $(V,\Phi)$ where $V$ is an unstable vector bundle. Then there exists a line bundle $L$ of degree $d>0$ that fits an exact sequence
\[
\xymatrix{ 
	0\ar[r] &L\ar[r] &V\ar[r] & M\ar[r] &0}
\]

with $M\in Pic^d(C)$.
If $d>1$ the bundle $L^{-1}MK_C$ has no non-zero global section because it has negative degree, hence $L$ is $\Phi$-invariant for any Higgs field $\Phi\in H^0( End(V)\otimes K_C)$.
The only case to check is $deg(L)=1$. The line bundle $L^{-1}MK_C$ has degree 0: it has global sections if and only if it is trivial, that is $M=LK_C^{-1}$ with $L\in Pic^1(C)$. 
As a consequence, if there exists an unstable vector bundle $V$ which is stable as a Higgs bundle, then it must be an extension of the above form. 
\begin{prop}
The locus $\mathcal{NU}$ of stable Higgs bundles $(V,\Phi)$ with $V$ unstable is isomorphic to a Zariski locally trivial $\co^5$-bundle on $Pic^1(C)$. As a consequence its $E-$polynomial  is $$E(\mathcal{NU})=u^5v^5E(Pic^1(C))=u^5 v^5-2 u^6 v^5+u^7 v^5-2 u^5 v^6+4 u^6 v^6-2 u^7 v^6+u^5 v^7-2 u^6 v^7+u^7 v^7.$$
\end{prop}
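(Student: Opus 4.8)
The plan is to follow the template of the $SL(2,\co)$ unstable case, splitting according to whether the destabilizing sequence $0\to L\to V\to M\to 0$, with $L\in Pic^1(C)$ and $M=LK_C^{-1}$ forced by the discussion above, is split or not. In either case $L$ is the maximal destabilizing subbundle, so $(V,\Phi)$ is stable exactly when $\Phi$ fails to preserve $L$, i.e. when the image of $\Phi$ under the restriction map $H^0(End(V)\otimes K_C)\to H^0(Hom(L,M\otimes K_C))=H^0(\mathcal{O})$ is nonzero. I would run both cases over the base $Pic^1(C)$ parametrizing $L$ (and hence $M$), using universal bundles $\luniv,\vuniv$ on $Pic^1(C)\times C$ and the projection $p$ to $Pic^1(C)$, exactly as in the type (ii) and (iv) computations above.

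For the non-split case I would show, as in the corresponding $SL(2,\co)$ argument, that no stable pair exists. Applying $Hom(-,MK_C)$ to $0\to L\to V\to M\to 0$ yields $0\to H^0(K_C)\to H^0(Hom(\vuniv,MK_C))\to H^0(\mathcal{O})\xrightarrow{\delta}H^1(K_C)$, where $\delta$ is composition with the extension class of $V$ in $Ext^1(M,L)=H^1(K_C)$. Since the extension is non-trivial and $H^0(\mathcal{O})$ and $H^1(K_C)$ are both one-dimensional, $\delta$ is an isomorphism; hence the map $H^0(Hom(\vuniv,MK_C))\to H^0(\mathcal{O})$ is zero, and composing with $H^0(End(V)\otimes K_C)\to H^0(Hom(\vuniv,MK_C))$ shows the destabilizing map vanishes. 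Thus $L$ is $\Phi$-invariant for every $\Phi$, and no non-split $V$ carries a stable Higgs field.

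In the split case $V=L\oplus M$ a Higgs field reads $\Phi=\left(\begin{smallmatrix}a&b\\c&d\end{smallmatrix}\right)$ with $a,d\in H^0(K_C)\cong\co^2$, $b\in H^0(M^{-1}LK_C)=H^0(K_C^2)\cong\co^3$ and $c\in H^0(L^{-1}MK_C)=H^0(\mathcal{O})\cong\co$, so the stable locus $\{c\neq0\}$ has dimension $8$. As $L\not\cong M$, one finds $Aut(V)$ to be the $4$-dimensional semidirect product of the torus $(\co^*)^2$ with the unipotent group $Hom(M,L)=H^0(K_C)$, its central $\co^*$ acting trivially by conjugation. I would then fix the gauge: the torus normalizes $c=1$, after which the unipotent part acts on the diagonal entry by $a\mapsto a+f$, $f\in H^0(K_C)$, so it normalizes $a=0$, leaving only the trivially acting center. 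The residual free parameters $(b,d)\in H^0(K_C^2)\oplus H^0(K_C)\cong\co^5$ then parametrize the isomorphism classes. Carrying out this construction in families over $Pic^1(C)$, using that the pushforwards $p_*(End(\vuniv)\otimes K_C)$ are locally free and the gauge slice is defined Zariski-locally, exhibits $\mathcal{NU}$ as a Zariski locally trivial $\co^5$-bundle over $Pic^1(C)$.

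Finally, multiplicativity of the $E$-polynomial for Zariski locally trivial fibrations gives $E(\mathcal{NU})=E(\co^5)\,E(Pic^1(C))=u^5v^5\,E(Pic^1(C))$, the stated polynomial. I expect the split case to be the main obstacle: one must verify that the $Aut(V)$-action on the $8$-dimensional stable locus admits exactly the affine gauge slice described, so that the quotient is genuinely the affine space $\co^5$ rather than merely $5$-dimensional, and that this slice patches Zariski-locally over $Pic^1(C)$ to produce a trivializing cover of the bundle.
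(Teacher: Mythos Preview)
Your proposal is correct and follows essentially the same approach as the paper: both split into the non-trivial extension case (ruled out via the connecting map $H^0(\mathcal{O})\to H^1(K_C)$ being an isomorphism) and the split case, where the quotient by $Aut(V)\cong(\co^*)^2\ltimes H^0(K_C)$ is computed by first normalizing $c=1$ with the torus and then $a=0$ with the unipotent part, leaving $(b,d)\in\co^5$. Your identification of $Aut(V)$ as the $4$-dimensional parabolic (using $Hom(M,L)=H^0(K_C)$, $Hom(L,M)=0$) is in fact more explicit than the paper's statement, and your caveat about checking that the affine gauge slice works Zariski-locally over $Pic^1(C)$ is a point the paper leaves implicit.
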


\begin{proof}

\gr{Trivial case}\\
Consider $
V=L\oplus M$ with $L\in Pic^1(C)$ and $M=LK_C^{-1}$. Then 
$$ H^0(End(V)\otimes K_C)\cong H^0(K_C)\oplus H^0(K_C^2)\oplus H^0(\mathcal{O})\oplus H^0(K_C) \cong \co^2\oplus \co^3\oplus \co \oplus\co^2$$
Thus the generic Higgs field will be of the form
$$\Phi=\left(\begin{array}{cc}
	a&b\\
	c&d\\
\end{array}\right)$$
with $a,d \in H^0(K_C)$, $b \in H^0(L^2 K_C)$, $c\in H^0(L^{-2}K_C)$.
Two Higgs fields define isomorphic Higgs bundles if and only if they are conjugate by an automorphism of the bundle, which lies in $\co^*\times\co^*\times (H^0(K_C),+)\subset \mathrm{GL}(2,\co)$. The action of $\co^*\times \co^*$ on the Higgs field is the one seen in the type (i) case and it has stabilizer $\co^*$. Therefore isomorphism classes of stable Higgs bundles are parametrized by 
$$ H^0(K_C)^2\times \frac{H^0(K_C^2)\times (H^0(\mathcal{O})\setminus\{0\})}{\co^*}\cong  H^0(K_C)^2\times H^0(K_C^2)\cong \co^7.$$
Thanks to the action of $\co^*\times \co^*$ we can suppose $c=1$. Then we have to consider the action of $\zeta \in (H^0(K_C),+)$:
$$\left(\begin{array}{cc}1 &\zeta \\
	0 &1\\
\end{array}\right) \left(\begin{array}{cc}
	a &b\\
	1 &d\\
\end{array}\right) \left(\begin{array}{cc}
	1 &-\zeta \\
	0 &1\\
\end{array}\right) =\left(\begin{array}{cc}
	a+ \zeta &b+\zeta (d-a) -\zeta^2\\
	1& d-\zeta 
\end{array}\right).$$
Such an action is linear and free on $a\in H^0(K_C)$ and we can fix $a+\zeta=0$. Therefore the quotient of $H^0(K_C)^2\times H^0(K_C^2)$ by $(H^0(K_C),+)$ is $H^0(K_C)^2\times H^0(K_C^2)\cong\co^5$. \\

\gr{Non trivial case}

Non-trivial extensions of $L$ by $M$ are parametrized by $\mathbb{P}(H^1(L^{-1}M))=\mathbb{P}^2$ and fit the exact sequence
$$ 0 \rightarrow L\rightarrow V\rightarrow M\rightarrow 0.$$
Tensoring by $K_C$, applying the functor $Hom(V,-)$ and taking global sections gives
%\begin{align*}
%0 & \rightarrow H^0(\mathcal{H}om(V,LK_C))\rightarrow H^0(End(V)\otimes K_C)\rightarrow H^0(\mathcal{H}om(V,MK_C))\rightarrow\\ 
%&\rightarrow H^1(\mathcal{H}om(V,LK_C))\rightarrow H^1(End(V)\otimes K_C) \rightarrow H^1(Hom(V,MK))
%\end{align*}

\begin{center}
	\begin{tikzpicture}[descr/.style={fill=white,inner sep=1.5pt}]
	\matrix (m) [
	matrix of math nodes,
	row sep=1em,
	column sep=2.5em,
	text height=1.5ex, text depth=0.25ex
	]
	{ 0 &  H^0(Hom(V,LK_C))& H^0(End(V)\otimes K_C)&H^0(Hom(V,MK_C))&\\
		& H^1(Hom(V,LK_C))& H^1(End(V)\otimes K_C)&H^1(Hom(V,MK_C))&0. \\
	};
	
	\path[overlay,->, font=\scriptsize,>=latex]
	(m-1-1) edge (m-1-2)
	(m-1-2) edge (m-1-3)
	(m-1-3) edge (m-1-4)
	(m-1-4) edge[out=355,in=175] node[descr,yshift=0.3ex] {$ext$} (m-2-2)
	(m-2-2) edge (m-2-3)
	(m-2-3) edge (m-2-4)
	(m-2-4) edge (m-2-5);
	\end{tikzpicture}
\end{center}

On the other hand, applying the functor $Hom(-,MK_C)$ and taking global sections one has
\begin{center}
	\begin{tikzpicture}[descr/.style={fill=white,inner sep=1.5pt}]
	\matrix (m) [
	matrix of math nodes,
	row sep=1em,
	column sep=2.5em,
	text height=1.5ex, text depth=0.25ex
	]
	{ 0 &  H^0(K_C)& H^0(Hom(V,MK_C))&H^0(L^{-1}MK_C)&\\
		& H^1(K_C)& H^1(Hom(V,MK_C)) & H^1(L^{-1}MK_C) &0. \\
	};
	
	\path[overlay,->, font=\scriptsize,>=latex]
	(m-1-1) edge (m-1-2)
	(m-1-2) edge (m-1-3)
	(m-1-3) edge (m-1-4)
	(m-1-4) edge[out=355,in=175] node[descr,yshift=0.3ex] {$ext$} (m-2-2)
	(m-2-2) edge (m-2-3)
	(m-2-3) edge (m-2-4)
	(m-2-4) edge (m-2-5);
	\end{tikzpicture}
\end{center}

Again, a Higgs bundle that has $V$ as underlying vector bundle is stable if and only if its Higgs field lies in the complement of the kernel of 
\begin{center}
	\begin{tikzpicture}
	\node (A) at (0,0) {$H^0(End(V)\otimes K_C)$};
	\node (B) at (4,0) {$H^0(Hom(V,MK_C))$};
	\node (C) at (8,0) {$H^0(L^{-1}MK_C)).$};
	\draw[->,thick] (A) -- (B) node [midway,above] {};
	\draw[->,thick] (B) -- (C) node [midway,above] {};
	\end{tikzpicture}
\end{center}

Observe that $H^0(L^{-1}MK_C)\cong H^0(\mathcal{O})\cong \co$ and $H^1(K_C)\cong\co$. As the extension map is non zero then it is an isomorphism. As a result the map $H^0(Hom(V,MK_C))\rightarrow H^0(L^{-1}MK_C))$ is 0, thus no nontrivial extensions give a stable Higgs bundle. 
\end{proof}

\begin{proof}[Proof of proposition \ref{emdolsgl}]
As in the $\mathrm{SL}(2,\co)$ case 
$$ \mdol^{\mathrm{GL},s}=T^*\mathcal{N}^s \sqcup\mathcal{N}_1\sqcup \mathcal{N}_3\sqcup \mathcal{N}_4\sqcup \mathcal{NU}.$$
Summing $E-$polynomials of all strata we get the $E-$polynomial of $\mdol^{\mathrm{GL},s}$:
\begin{align*}
E(\mdol^{GL,s})&=-2 u^4 v^4+4 u^5 v^4-3 u^6 v^4+2 u^7 v^4-u^8 v^4+4 u^4 v^5-10 u^5 v^5+10 u^6 v^5-6 u^7 v^5+2 u^8 v^5+\\
&-3 u^4 v^6+10 u^5 v^6-11 u^6 v^6+4 u^7 v^6+2 u^4 v^7-6 u^5 v^7+4 u^6 v^7+3 u^7 v^7-4 u^8 v^7+u^9 v^7+\\
&-u^4 v^8+2 u^5 v^8-4 u^7 v^8+6 u^8 v^8-4 u^9 v^8+u^10 v^8+u^7 v^9-4 u^8 v^9+5 u^9 v^9-2 u^{10} v^9+u^8 v^{10}+\\
&-2 u^9 v^{10}+u^{10} v^{10}.
\end{align*}
\end{proof}

\subsection{Cohomology of $\tsigma^{\mathrm{GL}}\setminus \tomega^{\mathrm{GL}}$ and $\tomega^{\mathrm{GL}}$}

\subsubsection{Cohomology of $\tomega^{\mathrm{GL}}$}
\begin{lem}
\begin{align*}
E(\tomega^{\mathrm{GL}})&=u^7 v^7 - 2 u^7 v^6 + u^7 v^5 - 2 u^6 v^7 + 5 u^6 v^6 - 4 u^6 v^5 + u^6 v^4 + u^5 v^7 - 4 u^5 v^6 + 6 u^5 v^5 - 4 u^5 v^4 +\\
&+ u^5 v^3 + u^4 v^6 - 4 u^4 v^5 + 6 u^4 v^4 - 4 u^4 v^3 + u^4 v^2 + u^3 v^5 - 4 u^3 v^4 + 5 u^3 v^3 - 2 u^3 v^2 + u^2 v^4+\\
&- 2 u^2 v^3 + u^2 v^2.
%E(\tomega^{\mathrm{GL}})= &u^2 v^2+2 u^3 v^2+u^4 v^2+2 u^2 v^3+5 u^3 v^3+4 u^4 v^3+u^5 v^3+u^2 v^4+4 u^3 v^4+6 u^4 v^4+4 u^5 v^4+\\
%&+u^6 v^4+u^3 v^5+4 u^4 v^5+6 u^5 v^5+4 u^6 v^5+u^7 v^5+u^4 v^6+4 u^5 v^6+5 u^6 v^6+2 u^7 v^6+u^5 v^7+\\
%&+2 u^6 v^7+u^7 v^7.
\end{align*}
\end{lem}
\begin{proof}
Recall that $\tomega^{\mathrm{GL}}$ is a $\mathcal{G}$-bundle on $T^*\mathcal{J}$, thus the $E$-polynomial of $\tomega^{\mathrm{GL}}$ is 
$$E(\tomega^{\mathrm{GL}})=(1+uv+u^2v^2+u^3v^3)E(\mathcal{J})E(\co^2).$$
\end{proof}
\subsubsection{Cohomology of $\tsigma^{\mathrm{GL}}\setminus \tomega^{\mathrm{GL}}$}
\begin{lem}
\begin{align*}
E(\tsigma^{\mathrm{GL}}\setminus \tomega^{\mathrm{GL}})=&-u^2 v^2+2 u^3 v^2-u^4 v^2+2 u^2 v^3-5 u^3 v^3+4 u^4 v^3-u^5 v^3-u^2 v^4+4 u^3 v^4-4 u^4 v^4+\\
&+2 u^6 v^4-2 u^7 v^4+u^8 v^4-u^3 v^5+8 u^5 v^5-14 u^6 v^5+10 u^7 v^5-4 u^8 v^5+u^9 v^5+2 u^4 v^6+\\
&-14 u^5 v^6+28 u^6 v^6-24 u^7 v^6+10 u^8 v^6-2 u^9 v^6-2 u^4 v^7+10 u^5 v^7-24 u^6 v^7+28 u^7 v^7+\\
&-14 u^8 v^7+2 u^9 v^7+u^4 v^8-4 u^5 v^8+10 u^6 v^8-14 u^7 v^8+9 u^8 v^8-2 u^9 v^8+u^5 v^9-2 u^6 v^9+\\
&+2 u^7 v^9-2 u^8 v^9+u^9 v^9.
\end{align*}
\end{lem}
\begin{proof}
Observe that $\tsigma^{\mathrm{GL}}\setminus \tomega^{\mathrm{GL}}$ is $\mathbb{P}^1$-bundle over $\Sigma^{\mathrm{GL}}\setminus \Omega^{\mathrm{GL}}$ and that $\Sigma^{\mathrm{GL}}\setminus \Omega^{\mathrm{GL}}$ is isomorphic to $\left(\mathcal{J}\times H^0(K_C)\right)^{(2)}$ minus the diagonal $\mathcal{J}\times H^0(K_C)$. Then 
$$E(\tsigma^{\mathrm{GL}})=\left(E(\mathcal{J}^{(2)})u^4v^4-E(\mathcal{J})u^2v^2\right)E(\mathbb{P}^1).$$
\end{proof}
\subsection{Cohomology of $\tmdol^{\mathrm{GL}}$ and intersection cohomology of $\mdol^{\mathrm{GL}}$}
The $E-$polynomial of $\tmdol^{\mathrm{GL}}$ is the sum of the $E-$polynomials of each term in \eqref{etmdol}. We have proved the following.
\begin{thm}
Let $\tmdol^{\mathrm{GL}}$ the semismall desingularization of $\mdol^{\mathrm{GL}}$ constructed in Section 4. The $E-$polynomial of $\tmdol^{\mathrm{GL}}$ is 
\begin{align*}	
E(\tmdol^{\mathrm{GL}})&= 4u^5 v^5-8 u^6 v^5-8 u^5 v^6+5 u^7 v^5+22 u^6 v^6+5 u^5 v^7-2 u^8 v^5-22 u^7 v^6-22 u^6 v^7-2 u^5 v^8+\\
&+u^9 v^5+10 u^8 v^6+32 u^7 v^7+10 u^6 v^8+u^5 v^9-2 u^9 v^6-18 u^8 v^7-18 u^7 v^8-2 u^6 v^9+3 u^9 v^7+\\
&+15 u^8 v^8+3 u^7 v^9-6 u^9 v^8-6 u^8 v^9+u^{10}v^8+6 u^9 v^9+u^8 v^{10}-2 u^{10} v^9-2 u^9 v^{10}+u^{10} v^{10}.
\end{align*}
Moreover, by lemma \ref{etob}, the Poincaré polynomial of $\tmdol^{\mathrm{GL}}$ is 
$$P_t(\tmdol^{\mathrm{GL}})=1+4t+8t^2+12t^3+21t^4+40t^5+54t^6+48t^7+32t^8+16t^9+4t^{10}.$$
\end{thm}
Finally, one can prove theorem \ref{ihgl} computing the intersection Betti numbers of $\mdol^{\mathrm{GL}}$.
\begin{proof}[Proof of theorem \ref{ihgl}]
By equation \eqref{dece}, we have to subtract from $E(\tmdol^{\mathrm{GL}})$ the contributions $E^{top}$ coming from the top cohomology of the fibres.
The intersection $E-$polynomial of $\mdol^{\mathrm{GL}}$ is
%\begin{align*}
%IE(\mdol^{\mathrm{GL}})=& 2 u^5 v^5+4 u^6 v^5+2 u^7 v^5+4 u^5 v^6+10 u^6 v^6+8 u^7 v^6+2 u^8 v^6+2 u^5 v^7+8 u^6 v^7+11 u^7 v^7+\\
%&+6 u^8 v^7+u^9 v^7+2 u^6 v^8+6 u^7 v^8+7 u^8 v^8+4 u^9 v^8+u^{10} v^8+u^7 v^9+4 u^8 v^9+5 u^9 v^9+\\
%&+2 u^{10} v^9+u^8 v^{10}+2 u^9 v^{10}+u^{10} v^{10}.
%\end{align*}
\begin{align*}
IE(\mdol^{\mathrm{GL}})&= 2 u^5 v^5-4 u^6 v^5-4 u^5 v^6+2 u^7 v^5+10 u^6 v^6+2 u^5 v^7-8 u^7 v^6-8 u^6 v^7+2 u^8 v^6+11 u^7 v^7+\\
&+2 u^6 v^8-6 u^8 v^7-6 u^7 v^8+u^9 v^7+7 u^8 v^8+u^7 v^9-4 u^9 v^8-4 u^8 v^9+u^{10} v^8+5 u^9 v^9+\\
&+u^8 v^{10}-2 u^{10} v^9-2 u^9 v^{10}+u^{10} v^{10}.
\end{align*}
By lemma \ref{etob}, we get the Poincaré polynomial and the Hodge diamond. 
\end{proof}
%la top cohomology delle fibre è data dal prodotto dell'E-polynomial di Sigma e Omega (voglio proprio le chiusure) con la coomologia top della fibra i.e. uv per il P^1 e u^3v^3 per la Grassmanniana. 

\section*{Ackowledgements}

This article comes from my Ph.D thesis. I wish to thank my supervisor Luca Migliorini for suggesting me this problem as long as for the helpful countless discussions and corrections. Then I would like to thank Filippo Viviani and Gabriele Mondello for comments and suggestions. Special thanks to Jochen Heinloth, Sebastian Mejia Schlegel and J\"org Schr\"urmann, who pointed out mistakes in previous versions of this paper. Also I am grateful to Enrico Fatighenti, Giovanni Mongardi, Lorenzo Ruffoni and Marco Trozzo for the support in writing this article and for useful discussions. I would also like to thank the anonymous referee for the quick and careful review, which substantially improved the exposition of this article.

\vspace{40pt}
Camilla Felisetti\\
Section de Mathematiques\\
Université de Genève\\
\noindent
%7 Route de Drize\\
%1227 Carouge (GE),\\
%Switzerland.\\
e-mail: camilla.felisetti@unige.ch\\
\end{document}